\theoremstyle{plain}
\newtheorem{thm}{Theorem}[section]
\newtheorem{prop}[thm]{Proposition}
\newtheorem{lem}[thm]{Lemma}
\newtheorem{cor}[thm]{Corollary}
\theoremstyle{definition}
\newtheorem{defn}[thm]{Definition}
\renewcommand{\labelenumi}{(\roman{enumi})}
\newcommand{\FRAC}[2]{\leavevmode\kern.1em\raise.5ex\hbox{\the\scriptfont0 #1}\kern-.1em/\kern-.15em\lower.25ex\hbox{\the\scriptfont0 #2}}
\newcommand{\BQ}{{\mathbf{Q}}}
\newcommand{\BZ}{{\mathbf{Z}}}
\newcommand{\dvr}{\mathcal{O}}
\newcommand{\et}{\mathrm{\acute{e}t}}
\newcommand{\hok}{H^{1}(K,\mathbf{Q}/\mathbf{Z})}
\newcommand{\sw}{\mathrm{sw}}
\newcommand{\ab}{\mathrm{ab}}
\newcommand{\mf}{\mathcal{F}}
\DeclareMathOperator{\Gal}{Gal}
\newcommand{\pr}{\mathrm{pr}}
\newcommand{\rsw}{\mathrm{rsw}}
\DeclareMathOperator{\cform}{char}
\newcommand{\fillog}{\mathrm{fil}}
\newcommand{\fil}{\fillog'}
\DeclareMathOperator{\art}{dt}
\newcommand{\wsk}{W_{s}(K)}
\DeclareMathOperator{\ord}{ord}
\newcommand{\oko}{\Omega_{K}^{1}}
\newcommand{\gr}{\mathrm{gr}'}
\newcommand{\grlog}{\mathrm{gr}}
\DeclareMathOperator{\Supp}{Supp}
\DeclareMathOperator{\Char}{Char}
\newcommand{\dt}{\mathrm{dt}}
\newcommand{\mg}{\mathcal{G}}
\newcommand{\cdvr}{\hat{\dvr}}
\newcommand{\mh}{\mathcal{H}}
\DeclareMathOperator{\Spec}{Spec}
\DeclareMathOperator{\Frac}{Frac}
\DeclareMathOperator{\Ker}{Ker}
\begin{document}
\title
{Equality of two non-logarithmic ramification filtrations
of abelianized Galois group in positive characteristic}
\author{YURI YATAGAWA}
\date{}
\maketitle
\begin{abstract}
We prove the equality of two non-logarithmic ramification filtrations defined by Matsuda
and Abbes-Saito for the abelianized absolute Galois group of a complete discrete valuation field
in positive characteristic.

We compute the refined Swan conductor and the characteristic form of a character of the fundamental 
group of a smooth separated scheme over a perfect field of positive characteristic
by using sheaves of Witt vectors.
\end{abstract}
\section*{Introduction}
Let $K$ be a complete discrete valuation field with residue field $F_{K}$
and $G_{K}=\Gal(K^{\mathrm{sep}}/K)$ the absolute Galois group of $K$.
In \cite{se}, the definition of (upper numbering) ramification filtration of 
$G_{K}$ is given in the case where $F_{K}$ is perfect.
In the general residue field case, Abbes-Saito (\cite{as1}) have given definitions of two ramification
filtrations of $G_{K}$ geometrically, one is logarithmic and the other is non-logarithmic.
In Saito's recent work (\cite{sa1}, \cite{sa2}) on characteristic cycle of constructible sheaves,
the non-logarithmic filtration in equal characteristic 
plays important roles to give an example of characteristic cycle.

Assume that $K$ is of positive characteristic.
Let $H^{1}(K,\BQ/\BZ)$ be the character group of $G_{K}$.
In this case, Matsuda (\cite{ma}) has defined a non-logarithmic ramification filtration of 
$H^{1}(K,\BQ/\BZ)$ as a non-logarithmic variant of Brylinski-Kato's logarithmic filtration 
(\cite{br}, \cite{ka1}) using Witt vectors.
In this paper, we prove that the abilianization of Abbes-Saito's non-logarithmic filtration 
$\{G_{K}^{r}\}_{r\in \BQ_{\ge 1}}$ is the same as Matsuda's filtration 
$\{\fil_{m}H^{1}(K,\BQ/\BZ)\}_{m\in \BZ_{\ge 1}}$
by taking dual, 
which enable us to compute abelianized Abbes-Saito's filtration by using Witt vectors.
This is stated as follows and proved in Section \ref{seceqfil}:

\begin{thm}
\label{mainthm}
Let $m\ge 1$ be an integer and $r$ a rational number such that $m\le r < m+1$. 
For $\chi\in H^{1}(K, \BQ/\BZ)$,
the following are equivalent:
\begin{enumerate}
\item $\chi \in \fil_{m}H^{1}(K,\mathbf{Q}/\mathbf{Z})$.
\item $\chi(G_{K}^{m+})=0$.
\item $\chi(G_{K}^{r+})=0$.
\end{enumerate}
\end{thm}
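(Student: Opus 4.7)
The plan is to reduce the equivalence to a direct comparison of the two filtrations in the Artin-Schreier-Witt framework and to prove the nontrivial implications explicitly. First, (ii) $\Rightarrow$ (iii) is immediate: for $m \le r$ one has $G_{K}^{r+} \subseteq G_{K}^{m+}$, so vanishing of $\chi$ on the larger group forces vanishing on the smaller. Hence it suffices to establish (i) $\Rightarrow$ (ii) and (iii) $\Rightarrow$ (i).

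For both substantive directions, I would reduce to characters $\chi$ of $p$-power order, since the prime-to-$p$ part is tame and is controlled identically by the two filtrations at level $1$. By Artin-Schreier-Witt theory, such $\chi$ corresponds to a class in $W_{n}(K)/(\mathrm{Frob}-1)W_{n}(K)$, represented by a Witt vector $(a_{0}, \ldots, a_{n-1})$, and cuts out a cyclic extension $L_{\chi}/K$ with an explicit equational model. Matsuda's condition $\chi \in \fil_{m} H^{1}(K,\BQ/\BZ)$ translates into integral bounds on $\ord_{K} a_{i}$ of the form $p^{n-1-i}(-\ord_{K} a_{i}) \le m$, with appropriate conventions at boundary terms. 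For (i) $\Rightarrow$ (ii), I would then perform the Abbes-Saito dilatation construction on this explicit model at thickness $s > m$, verifying that the rigid tubular neighborhood becomes geometrically disconnected so that $G_{K}^{s}$ acts trivially on $L_{\chi}$; taking the union over $s > m$ then yields $\chi(G_{K}^{m+})=0$. For (iii) $\Rightarrow$ (i) I would argue by contrapositive: if $\chi \notin \fil_{m}$, the Witt vector representative unavoidably has a component of valuation below the Matsuda bound, and the dilatation at thickness $r < m+1$ fails to separate, exhibiting an element of $G_{K}^{r+}$ on which $\chi$ is nontrivial.

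The main obstacle will be the explicit geometric analysis of the Abbes-Saito non-logarithmic tubular neighborhood for the Artin-Schreier-Witt extension $L_{\chi}/K$. The absence of logarithmic poles shifts the effective filtration index by an integer relative to the known logarithmic comparison, and one has to keep track of how the successive Witt components of $(a_{0},\ldots,a_{n-1})$ interact under the dilatation when passing to the reduction. Concretely, one must verify that the break of the non-log filtration of $L_{\chi}/K$ is an integer, equal to $\max_{i} p^{n-1-i}(-\ord_{K} a_{i})$ for a suitably normalized representative. This integrality is the true origin of the half-open interval $[m,m+1)$ in condition (iii): no break occurs strictly between consecutive integers for an abelian character, which is precisely why (ii) and (iii) remain equivalent throughout the interval.
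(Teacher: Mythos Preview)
Your outline parallels the paper's---reduce to $p$-power order, represent $\chi$ by a Witt vector, then analyze the Abbes--Saito dilatation for the resulting cyclic cover---but there is a genuine gap in the hard direction, and one of your formulas is off. The quantity $\max_i p^{n-1-i}(-\ord_K a_i)$ that you propose for the non-logarithmic break is the Brylinski--Kato Swan conductor $\sw(\chi)$, not Matsuda's total dimension $\dt(\chi)$; one has $\dt(\chi)\in\{\sw(\chi),\sw(\chi)+1\}$, and which value occurs depends on a $p$-divisibility condition on the individual $\ord_K a_i$, encoded in Matsuda's definition $\fil_m W_s(K)=\fillog_{m-1}W_s(K)+V^{s-s'}\fillog_m W_{s'}(K)$ with $s'=\min(\ord_p m,s)$. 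No choice of representative collapses this distinction, and it is precisely this subtlety that separates the non-logarithmic case from the logarithmic one already treated by Kato.

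For the contrapositive (iii)$\Rightarrow$(i) you assert that when $\chi\notin\fil_m$ the dilatation at thickness $r<m+1$ ``fails to separate'', but this is exactly the nontrivial content and your sketch gives no mechanism for verifying it. The paper's device is the \emph{characteristic form}: an injection $\phi'^{(m')}\colon \gr_{m'}H^1(K,\BQ/\BZ)\hookrightarrow \gr_{m'}\Omega^1_K\otimes_{F_K}F_K^{1/p}$ built from $-F^{s-1}d$ on Witt vectors, together with an explicit Witt-vector valuation computation showing that the reduction of the cover over $T^{(R)}$ at $R=\dt(\chi)D$ is the Artin--Schreier torsor $t^p-t=\cform(\chi)$. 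Non-vanishing of $\cform(\chi)$, which is automatic from injectivity of $\phi'^{(m')}$, then forces $G_K^r$ to act nontrivially for all $r<\dt(\chi)$. The case $(p,\dt(\chi))=(2,2)$ needs a separate construction over $F_K^{1/2}$, which your sketch does not flag. The paper also first reduces to $F_K$ finitely generated over a perfect subfield (so that the scheme-theoretic dilatation over $X\times_k X$ is available) and recovers general $K$ by a limit argument; a purely rigid-analytic approach might avoid this, but you should make that explicit.
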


For $m>2$, Theorem \ref{mainthm} has been proved by Abbes-Saito (\cite{as3}).
The proof goes similarly as the proof by Abbes-Saito (loc.\ cit.).
The proof in this paper relies on the 
characteristic form defined by Saito (\cite{sa1}) even in the exceptional case where $p=2$
and an explicit computation of the characteristic form. 

Let $X$ be a smooth separated scheme over a perfect field of positive characteristic
and $U=X-D$ the complement of a divisor $D$ on $X$ with simple normal crossings.
The characteristic form of a character of the abelianized fundamental group $\pi_{1}^{\ab}(U)$
is an element of the restriction over a radicial covering of a sub divisor $Z$ of $D$
of differential module of $X$.
We compute the characteristic form using sheaves of Witt vectors.
By taking $X$ and $D$ so that the local field at a generic point of $D$ is $K$
and using the injections defined by the characteristic form
from the graded quotients of $\{\fil_{m}H^{1}(K,\BQ/\BZ)\}_{m\in \BZ_{\ge 1}}$
and the modules of characters of the graded quotients of $\{G_{K}^{r}\}_{r\in \BQ_{\ge 1}}$,
we obtain the proof of Theorem \ref{mainthm}.

This paper consists of three sections.
In Section \ref{seckmram}, we recall Kato and Matsuda's ramification theories in positive characteristic.
We give some complements to these theories to compute the refined Swan conductor (\cite{ka1})
and the characteristic form for a character of the fundamental group of a smooth separated scheme
over a perfect field of positive characteristic in terms of 
sheaves of Witt vectors.
In Section \ref{secasram}, we recall Abbes-Saito's non-logarithmic ramification theory in 
positive characteristic in terms of schemes over a perfect field.
We recall the definition of the characteristic form defined by Saito
and show that this characteristic form is computed with sheaves of Witt vectors. 
Section \ref{seceqfil} is devoted to prove Theorem \ref{mainthm}.

This paper is a refinement of a part of the author's thesis at University of Tokyo.
The author would like to express her sincere gratitude to her supervisor Takeshi Saito 
for suggesting her to refine the construction of characteristic form using sheaves of Witt vectors, 
reading the manuscript carefully, and giving a lot of advice on the manuscript.
The research was partially supported by the Program for Leading Graduate Schools, MEXT, Japan
and JSPS KAKENHI Grant Number 15J03851.

\tableofcontents
\section{Kato and Matsuda's ramification theories and complements}
\label{seckmram}
\subsection{Local theory: logarithmic case}
\label{ssloclog}
We recall Kato's ramification theory (\cite{ka1}, \cite{ka2})
and prove some properties of graded quotients 
of some filtrations
for the proof of Proposition \ref{lemsone} in
Subsection \ref{sssheaf}.

Let $K$ be a complete discrete valuation field of characteristic $p>0$.
We regard $H^{1}_{\et}(K,\mathbf{Z}/n\mathbf{Z})$ as a subgroup of $H^{1}_{\et}(K,\mathbf{Q}/\mathbf{Z})=\varinjlim_{n} H^{1}_{\et}(K,\mathbf{Z}/n\mathbf{Z})$.
Let $W_{s}(K)$ be the Witt ring of $K$ of length $s \geq 0$.
By definition, $W_{0}(K)=0$ and $W_{1}(K)=K$.
We write
\begin{equation}
F : W_{s}(K) \rightarrow W_{s}(K) ;\ 
(a_{s-1}, \cdots , a_{0}) \mapsto (a_{s-1}^{p}, \cdots , a_{0}^{p}) \notag
\end{equation}
for the Frobenius.
Since $W_{s}(\mathbf{F}_{p})\simeq \BZ/p^{s}\BZ$, the exact sequence
\begin{equation}
0\rightarrow W_{s}(\mathbf{F}_{p}) \rightarrow W_{s}(K) 
\xrightarrow{F-1} W_{s}(K) \rightarrow 0 \notag 
\end{equation}
induces the exact sequence 
\begin{equation}
\label{exseqdelta}
0 \rightarrow W_{s}(\mathbf{F}_{p}) \rightarrow W_{s}(K) 
\xrightarrow{F-1} W_{s}(K)  
\rightarrow H^{1}(K, \BZ/p^{s}\BZ) \rightarrow 0. 
\end{equation}
We define
\begin{equation}
\label{delts}
\delta _{s}: W_{s}(K) \rightarrow H^{1}(K,\mathbf{Q}/\mathbf{Z}) 
\end{equation}
to be the composition
\begin{equation}
W_{s}(K) \rightarrow H^{1}(K,\BZ/p^{s}\BZ) \rightarrow
H^{1}(K,\mathbf{Q}/\mathbf{Z}), \notag
\end{equation}
where the first arrow is the forth morphism in (\ref{exseqdelta}). 
 
Let $\mathcal{O}_{K}$ be the valuation ring of $K$
and $F_{K}$ the residue field of $K$.
We write $G_{K}$ for the absolute Galois group of $K$.

\begin{defn}[{\cite[Definition (3.1)]{ka1}}]
\label{filw}
Let $s\ge 0$ be an integer.
\begin{enumerate}
\item Let $a=(a_{s-1},\ldots,a_{0})$ be an element of $W_{s}(K)$.
We define $\ord_{K}(a)$ by $\ord_{K}(a)=\min_{0\le i \le s-1}\{p^{i}\ord_{K}(a_{i})\}$.
\item We define an increasing filtration $\{ \fillog_{n}W_{s}(K) \}_{n \in \mathbf{Z}}$
of $W_{s}(K)$ by
\begin{equation}
\label{filwdef}
\fillog_{n}W_{s}(K) = \{ a\in W_{s}(K) \ |\ \ord_{K}(a) 
\geq -n\}.
\end{equation}
\end{enumerate}
\end{defn}

The filtration $\{\fillog_{n}W_{s}(K)\}_{n\in \mathbf{Z}}$ in Definition \ref{filw} is first defined by Brylinski (\cite[Proposition 1]{br})
and $\fillog_{n}W_{s}(K)$ is a submodule of $W_{s}(K)$ for $n\in \BZ$
(loc.\ cit.).

Let $n\ge 0$ be an integer and put $s'=\ord_{p}(n)$.
Suppose that $s'<s$.
Let $V$ denote the Verschiebung
\begin{equation}
V : W_{s}(K) \rightarrow W_{s+1}(K) ;\ (a_{s-1}, \cdots, a_{0}) \mapsto (0, a_{s-1}, \cdots, a_{0}). \notag
\end{equation}
Since $(a_{s-1},\ldots,a_{0})=(a_{s-1},\ldots,a_{s'+1},0,\ldots,0)+V^{s-s'-1}(a_{s'},\ldots,a_{0})$, 
we have
\begin{equation}
\label{fillogw}
\fillog_{n}W_{s}(K)=\fillog_{n-1}W_{s}(K)+V^{s-s^{\prime}-1}\fillog_{n}W_{s^{\prime}+1}(K).
\end{equation}

\begin{defn}[{\cite[Corollary (2.5), Theorem (3.2) (1)]{ka1}}]
\label{defoffiltofh1}
Let $\delta_{s}$ be as in (\ref{delts}).
\begin{enumerate}
\item We define an increasing filtration $\{\fillog_{n}H^{1}(K,\BZ/p^{s}\BZ)\}_{n\in \BZ_{\ge 0}}$
of $H^{1}(K,\BZ/p^{s}\BZ)$ by 
\begin{equation}
\fillog_{n}H^{1}(K,\BZ/p^{s}\BZ)=\delta_{s}(\fillog_{n}W_{s}(K)). \notag
\end{equation}
\item We define an increasing filtration 
$\{ \fillog_{n}H^{1}(K,\mathbf{Q}/\mathbf{Z}) \}_{n \in \mathbf{Z}_{\geq 0}}$ 
of $H^{1}(K,\mathbf{Q}/\mathbf{Z})$ by
\begin{equation}
\fillog_{n}H^{1}(K,\mathbf{Q}/\mathbf{Z}) = H^{1}(K,\mathbf{Q}/\mathbf{Z})\{p^{\prime}\} + \bigcup_{s \geq 1} \delta_{s}(\fillog_{n}W_{s}(K)),
\end{equation}
where $H^{1}(K,\mathbf{Q}/\mathbf{Z})\{p^{\prime}\}$ denotes the prime-to-$p$ part of $H^{1}(K,\mathbf{Q}/\mathbf{Z})$.
\end{enumerate}
\end{defn}

\begin{defn}[{\cite[Definition (2.2)]{ka1}}]
\label{dsa}
Let $\chi$ be an element of $H^{1}(K,\mathbf{Q}/\mathbf{Z})$.
We define the \textit{Swan conductor} $\sw(\chi)$ of $\chi$ by
$\mathrm{sw}(\chi) = \min \{ n\in \mathbf{Z}_{\geq 0}\ |\ \chi \in \fillog_{n}H^{1}(K,\mathbf{Q}/\mathbf{Z})\}$.
\end{defn}

We recall the definition of refined Swan conductor of $\chi \in \hok$
given by Kato (\cite[(3.4.2)]{ka2}).
Let $\Omega_{K}^{1}$ be the differential module of $K$ over $K^{p}\subset K$.

\begin{defn}
\label{defoffilomega}
We define an increasing filtration $\{ \fillog_{n}\Omega_{K}^{1}\}_{n\in \mathbf{Z}_{\geq 0}}$ of $\Omega^{1}_{K}$ by
\begin{equation}
\fillog_{n}\Omega^{1}_{K} = \{ (\alpha d \pi/\pi+\beta)/\pi^{n} \ |\ \alpha \in \mathcal{O}_{K}, \beta \in \Omega_{\mathcal{O}_{K}}^{1} \}
=\mathfrak{m}^{-n}\Omega_{\dvr_{K}}^{1}(\log ),
\end{equation}
where $\pi$ is a uniformizer of $K$ and $\mathfrak{m}$ is the maximal ideal of $\dvr_{K}$.
\end{defn}

We consider the morphism
\begin{equation}
\label{fsd}
-F^{s-1}d : \wsk \rightarrow \Omega_{K}^{1} ;\ (a_{s-1}, \cdots , a_{0}) \mapsto 
-\sum_{i=0}^{s-1}a_{i}^{p^{i}-1}da_{i}. 
\end{equation}
The morphism $-F^{s-1}d$ (\ref{fsd}) satisfies 
$-F^{s-1}d(\fillog_{n}\wsk)\subset \fillog_{n}\Omega_{K}^{1}$.
We put $\grlog_{n} = \fillog_{n}/\fillog_{n-1}$ for $n\in \BZ_{\geq 1}$.
Then, for $n \in \BZ_{\ge 1}$, the morphism (\ref{fsd}) induces
\begin{equation}
{\varphi_{s}}^{(n)} \colon \grlog_{n}\wsk \rightarrow \grlog_{n}\Omega_{K}^{1}. \notag
\end{equation}

Let $\delta_{s}^{(n)}\colon \grlog_{n}W_{s}(K)\rightarrow \grlog_{n}H^{1}(K,\mathbf{Q}/\mathbf{Z})$ 
denote the morphism induced by $\delta_{s}$ (\ref{delts}) for $n\in \mathbf{Z}_{\ge 1}$.
For $n\in\mathbf{Z}_{\ge 1}$, 
there exists a unique injection $\phi^{(n)}\colon \grlog_{n}\hok \rightarrow \grlog_{n}\oko$ such that the diagram
\begin{equation}
\label{refinedswaninj}
\xymatrix{\grlog_{n}\wsk \ar[rr]^{\varphi_{s}^{(n)}} \ar[rd]_{\delta_{s}^{(n)}} & & \grlog_{n}\Omega^{1}_{K} \\
& \grlog_{n}\hok \ar[ru]_{\phi^{(n)}} &
}
\end{equation}
is commutative for any $s\in \mathbf{Z}_{\ge 0}$ by \cite[Remark 3.2.12]{ma}, or \cite[\S 10]{as3} for more detail.
We note that $\grlog_{n}\Omega^{1}_{K}\simeq \mathfrak{m}^{-n}\Omega^{1}_{\dvr_{K}}(\log )\otimes_{\dvr_{K}} F_{K}$ is a vector space over $F_{K}$.

\begin{defn}[{\cite[(3.4.2)]{ka2}}, {\cite[Remark 3.2.12]{ma}}, 
see also {\cite[D\'{e}finition 10.16]{as3}}]
\label{defoflocalrefinedswan}
Let $\chi$ be an element of $\hok$. 
We put $n=\sw(\chi)$.
If $n\ge 1$, then we define the \textit{refined Swan conductor} $\rsw (\chi)$ of $\chi$
to be the image of $\chi$ by $\phi^{(n)}$ in (\ref{refinedswaninj}).
\end{defn} 

In the rest of this subsection, we prove some properties of graded quotients of filtrations.

For $q\in \mathbf{R}$, let $[q]$ denote the integer $n$ such that
$q-1 < n \le q$.

\begin{lem}
\label{lemgaus}
Let $m$ and $r\ge 0$ be integers.
\begin{enumerate}
\item $[m/p^{r}]=[(m-1)/p^{r}]+1$ if $m\in p^{r}\BZ$ and
$[m/p^{r}]=[(m-1)/p^{r}]$ if $m\notin p^{r}\BZ$.
\item $[[m/p^{r}]/p]=[m/p^{r+1}]=[[m/p]/p^{r}]$.
\end{enumerate}
\end{lem}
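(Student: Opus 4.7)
The plan is to reduce both parts of the lemma to the elementary division algorithm, writing $m$ in the form $p^{r}k + s$ or $p^{r+1}k + s$ with standard bounds on the remainder. Recall that by the defining inequality $q - 1 < [q] \le q$, the bracket $[q]$ is just the floor function, and this is the only property I will use.

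For part (i), I would write $m = p^{r}k + s$ with $k \in \mathbf{Z}$ and $0 \le s \le p^{r}-1$, so that $[m/p^{r}] = k$. If $s = 0$, which is exactly the case $m \in p^{r}\mathbf{Z}$, then $(m-1)/p^{r} = k - 1/p^{r}$ lies in the interval $(k-1, k)$, hence $[(m-1)/p^{r}] = k - 1$ and the first equation holds. If $s \ne 0$, then $m - 1 = p^{r}k + (s-1)$ with $0 \le s-1 \le p^{r} - 2$, so $[(m-1)/p^{r}] = k$ and the second equation holds.

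For part (ii), it suffices to establish the general identity $[[q]/n] = [q/n]$ for a real number $q$ and a positive integer $n$, and apply it twice with $(q,n) = (m/p^{r}, p)$ and $(m/p, p^{r})$. I can prove the case needed here directly by iterating the division algorithm: write $m = p^{r+1}k + s$ with $0 \le s \le p^{r+1}-1$, giving $[m/p^{r+1}] = k$, and expand $s = p^{r}q' + s'$ with $0 \le q' \le p - 1$ and $0 \le s' \le p^{r}-1$; this gives $[m/p^{r}] = pk + q'$ and hence $[[m/p^{r}]/p] = k + [q'/p] = k$. The equality $[m/p^{r+1}] = [[m/p]/p^{r}]$ follows by first dividing by $p$ and then by $p^{r}$ in the same manner.

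The only obstacle is bookkeeping of remainders across two layers of division, which is purely mechanical; there is no genuine mathematical content beyond the division algorithm, so I expect the write-up to be short.
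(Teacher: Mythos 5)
Your proposal is correct and follows essentially the same argument as the paper: both parts reduce to writing $m=p^{r}q+a$ (resp.\ $m=p^{r+1}q'+a'$) by the division algorithm and tracking the remainders, exactly as in the paper's proof. The only cosmetic point is that for $r=0$ the quantity $(m-1)/p^{r}$ equals the endpoint $k-1$ rather than lying in the open interval $(k-1,k)$, but the conclusion $[(m-1)/p^{r}]=k-1$ is unaffected.
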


\begin{proof}
(i) We put $m=p^{r}q+a$, where $q,a\in \BZ$ and $0\le a<p^{r}$.
Then $[m/p^{r}]=q$.
Further $[(m-1)/p^{r}]=q+[(a-1)/p^{r}]$.
Since $[(a-1)/p^{r}]=-1$ if $a=0$ and $[(a-1)/p^{r}]=0$ if $0<a<p^{r}$,
the assertion follows.

(ii) We put $m=p^{r+1}q'+a'$, where $q',a'\in \BZ$ and $0\le a'<p^{r+1}$.
Then $[m/p^{r}]=pq'+[a'/p^{r}]$ and $0\le [a'/p^{r}] <p$.
Further $[m/p]=p^{r}q'+[a'/p]$ and $0\le[a'/p]<p^{r}$.
Hence we have $[[m/p^{r}]/p]=q'=[m/p^{r+1}]$ and $[[m/p]/p^{r}]=q'=[m/p^{r+1}]$.
\end{proof}

\begin{lem}
\label{lempbbyf}
Let $a$ be an element of $W_{s}(K)$.
\begin{enumerate}
\item $\ord_{K}(F(a))=p\cdot\ord_{K}(a)$.
\item $\ord_{K}((F-1)(a))=p\cdot\ord_{K}(a)$ if $\ord_{K}(a)<0$ 
and $\ord_{K}((F-1)(a))\ge 0$ if $\ord_{K}(a)\ge 0$.
\item For an integer $n\ge 0$, we have
$F^{-1}(\fillog_{n}W_{s}(K))
=(F-1)^{-1}(\fillog_{n}W_{s}(K))
=\fillog_{[n/p]}W_{s}(K)$.
\end{enumerate}
\end{lem}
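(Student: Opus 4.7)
The plan is to deduce all three parts directly from the definition $\ord_K(a)=\min_{0\le i\le s-1}\{p^{i}\ord_{K}(a_{i})\}$, together with the fact, already recalled before Lemma~\ref{lempbbyf}, that $\fillog_{n}W_{s}(K)$ is a submodule. The latter immediately gives the ultrametric inequality $\ord_{K}(a+b)\ge \min(\ord_{K}(a),\ord_{K}(b))$ for Witt vectors, and as usual a strict inequality in the minimum forces equality in the sum (negation preserves $\ord_{K}$ since $\fillog_{n}$ is closed under $-1$).

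For (i), I would compute directly from the componentwise description of $F$: since $\ord_{K}(a_{i}^{p})=p\,\ord_{K}(a_{i})$, we have $\ord_{K}(F(a))=\min_{i}\{p^{i}\cdot p\,\ord_{K}(a_{i})\}=p\,\ord_{K}(a)$. For (ii), I split into two cases. If $\ord_{K}(a)<0$, then (i) gives $\ord_{K}(F(a))=p\,\ord_{K}(a)<\ord_{K}(a)$ strictly, since $p\ge 2$; the ultrametric inequality then forces $\ord_{K}((F-1)(a))=\ord_{K}(F(a))=p\,\ord_{K}(a)$. If $\ord_{K}(a)\ge 0$, both $\ord_{K}(a)$ and $\ord_{K}(F(a))=p\,\ord_{K}(a)$ are nonnegative, so the ultrametric inequality yields $\ord_{K}((F-1)(a))\ge 0$.

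For (iii), I would translate membership into integer inequalities and use (i) and (ii). Namely, $a\in F^{-1}(\fillog_{n}W_{s}(K))$ is equivalent to $p\,\ord_{K}(a)\ge -n$, hence to $\ord_{K}(a)\ge -n/p$, and since $\ord_{K}(a)\in \BZ\cup\{\infty\}$ this is equivalent to $\ord_{K}(a)\ge -[n/p]$, i.e.\ $a\in\fillog_{[n/p]}W_{s}(K)$. The same chain of equivalences applies to $(F-1)^{-1}(\fillog_{n}W_{s}(K))$ using (ii) in place of (i): the case $\ord_{K}(a)<0$ gives the same numerical inequality, while the case $\ord_{K}(a)\ge 0$ is automatically contained in $\fillog_{[n/p]}W_{s}(K)$ since $[n/p]\ge 0$ for $n\ge 0$.

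The argument is essentially routine once the ultrametric property of $\ord_{K}$ on $W_{s}(K)$ is in hand, so the only mildly delicate point is the "strict inequality forces equality" step in (ii); but this is exactly the standard argument for non-archimedean valuations and goes through unchanged here because $\fillog_{n}W_{s}(K)$ is a submodule. No further input is needed.
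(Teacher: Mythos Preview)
Your proof is correct and follows essentially the same route as the paper: both arguments compute (i) componentwise, prove (ii) via the ultrametric inequality coming from the submodule property of $\fillog_{n}W_{s}(K)$ together with the strict inequality $\ord_{K}(F(a))<\ord_{K}(a)$ when $\ord_{K}(a)<0$, and deduce (iii) by translating membership in $\fillog_{n}$ into the integer inequality $\ord_{K}(a)\ge -[n/p]$. The only cosmetic difference is that the paper phrases the ultrametric step directly in terms of the filtration (e.g.\ ``$(F-1)(a)\notin\fillog_{pn-1}W_{s}(K)$'') rather than in valuation language.
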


\begin{proof}
(i) We put  $a=(a_{s-1},\ldots,a_{0})$.
Since $F(a)=(a_{s-1}^{p},\ldots,a_{0}^{p})$,
the assertion follows.

(ii) Suppose that $\ord_{K}(a)\ge 0$.
Then, since both $a$ and $F(a)$ belong to $\fillog_{0}W_{s}(K)$, we have $(F-1)(a)\in \fillog_{0}W_{s}(K)$.
Hence we have $\ord_{K}((F-1)(a))\ge 0$ by (\ref{filwdef}).

Suppose that $\ord_{K}(a)<0$.
We put $\ord_{K}(a)=-n$.
Since both $a$ and $F(a)$ belong to $\fillog_{pn}W_{s}(K)$, we have $(F-1)(a)\in \fillog_{pn}W_{s}(K)$.
Since $\ord_{K}(F(a))=-pn< \ord_{K}(a)=-n$, 
we have $(F-1)(a)\notin \fillog_{pn-1}W_{s}(K)$.
Hence we have $\ord_{K}((F-1)(a))=-pn$.

(iii) By (i), we have $F(a)\in \fillog_{n}W_{s}(K)$ if and only if
$\ord_{K}(a)\ge -n/p$ for $a\in W_{s}(K)$.
Hence we have $F^{-1}(\fillog_{n}W_{s}(K))=\fillog_{[n/p]}W_{s}(K)$.
By (ii), we have $(F-1)^{-1}(\fillog_{n}W_{s}(K))=\fillog_{[n/p]}W_{s}(K)$
similarly.
\end{proof}

Let $n\ge 1$ be an integer.
By Lemma \ref{lempbbyf} (iii), the Frobenius $F\colon W_{s}(K)\rightarrow
W_{s}(K)$ induces the injection
\begin{equation}
\label{barfk}
\bar{F}\colon \fillog_{[n/p]}W_{s}(K)/\fillog_{[(n-1)/p]}W_{s}(K) 
\rightarrow \grlog_{n}W_{s}(K).
\end{equation}
By Lemma \ref{lemgaus} (i), the domain of (\ref{barfk}) is equal to
$\grlog_{n/p}W_{s}(K)$ if $n\in p\BZ$ and it is $0$ if $n\notin p\BZ$.

By Lemma \ref{lempbbyf} (iii), the morphism $F-1\colon W_{s}(K)\rightarrow W_{s}(K)$ 
induces the injection
\begin{equation}
\label{barfmok}
\overline{F-1}\colon \fillog_{[n/p]}W_{s}(K)/\fillog_{[(n-1)/p]}W_{s}(K) 
\rightarrow \grlog_{n}W_{s}(K). 
\end{equation}
Since $[n/p]< n$ if $n\ge 1$,
the morphisms (\ref{barfk}) and (\ref{barfmok}) are the same.

\begin{lem}[{cf.\ \cite[Theorem (3.2), Corollary (3.3)]{ka1}}]
\label{exseqgrwk}
Let $n\ge 1$ be an integer.
Then we have the exact sequence
\begin{equation}
0\rightarrow \fillog_{[n/p]}W_{s}(K)/\fillog_{[(n-1)/p]}W_{s}(K)\xrightarrow{\bar{F}} \grlog_{n}W_{s}(K)
\xrightarrow{\varphi^{(n)}_{s}}\grlog_{n}\Omega^{1}_{K},
\notag
\end{equation}
where $\fillog_{[n/p]}W_{s}(K)/\fillog_{[(n-1)/p]}W_{s}(K)$ is $\grlog_{n/p}W_{s}(K)$ if $n\in p\BZ$ and
$0$ if $n\notin p\BZ$.
\end{lem}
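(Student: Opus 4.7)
The plan is to establish exactness at the middle term, since the injectivity of $\bar{F}$ is already recorded in the discussion around (\ref{barfk}). The main tool will be the factorization $\varphi_{s}^{(n)} = \phi^{(n)} \circ \delta_{s}^{(n)}$ supplied by the commutative triangle (\ref{refinedswaninj}), combined with the injectivity of $\phi^{(n)}$ cited from \cite[Remark 3.2.12]{ma}; this identifies $\ker(\varphi_{s}^{(n)})$ with $\ker(\delta_{s}^{(n)})$, which can then be computed using (\ref{exseqdelta}).

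For the vanishing $\varphi_{s}^{(n)} \circ \bar{F} = 0$, I would note that $\bar{F}$ and $\overline{F-1}$ coincide on the given domain (as observed just before the lemma), while $\Image(F-1) \subseteq \ker \delta_{s}$ by (\ref{exseqdelta}); the triangle (\ref{refinedswaninj}) then gives the vanishing. Equivalently, a direct calculation yields $-F^{s-1}d(Fa) = -\sum_{i} a_{i}^{p(p^{i}-1)} \cdot p\, a_{i}^{p-1}da_{i} = 0$ in characteristic $p$, since each $d(a_{i}^{p})$ vanishes.

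For the reverse inclusion, suppose $a \in \fillog_{n}W_{s}(K)$ satisfies $\varphi_{s}^{(n)}(\bar a) = 0$. Injectivity of $\phi^{(n)}$ applied to (\ref{refinedswaninj}) yields $\delta_{s}^{(n)}(\bar a) = 0$, so $\delta_{s}(a) \in \fillog_{n-1}H^{1}(K,\BQ/\BZ)$. Using that $\delta_{s}(a)$ is $p^{s}$-torsion, Definition \ref{defoffiltofh1}, and the Verschiebung compatibility of the $\delta_{s'}$ from \cite{ka1}, one selects $b \in \fillog_{n-1}W_{s}(K)$ with $\delta_{s}(b) = \delta_{s}(a)$; the exact sequence (\ref{exseqdelta}) then gives $a - b = (F-1)c$ for some $c \in W_{s}(K)$. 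Since $(F-1)c \in \fillog_{n}W_{s}(K)$, Lemma \ref{lempbbyf} (iii) forces $c \in \fillog_{[n/p]}W_{s}(K)$, and hence $\bar a = \overline{F-1}(\bar c) = \bar{F}(\bar c)$ in $\grlog_{n}W_{s}(K)$, with $\bar c \in \fillog_{[n/p]}W_{s}(K)/\fillog_{[(n-1)/p]}W_{s}(K)$, as desired.

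The main obstacle is the step of selecting the representative $b$ at the same Witt length $s$, which rests on the standard compatibility of the $p$-primary filtrations on the various $H^{1}(K,\BZ/p^{s'}\BZ)$ under Verschiebung; this compatibility is implicit in Kato's setup, and makes the present lemma essentially a reformulation of \cite[Theorem (3.2), Corollary (3.3)]{ka1}. The remaining verifications, such as $[n/p] \le n-1$ for $n \ge 1$ (which is needed to make $\bar F$ and $\overline{F-1}$ agree on the given quotient) and the identification of the domain with $\grlog_{n/p}W_{s}(K)$ or $0$ via Lemma \ref{lemgaus} (i), are routine.
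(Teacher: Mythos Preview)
Your proof is correct and follows essentially the same route as the paper: both use the factorization $\varphi_{s}^{(n)} = \phi^{(n)} \circ \delta_{s}^{(n)}$ together with the injectivity of $\phi^{(n)}$ to reduce to computing $\ker \delta_{s}^{(n)}$, and then identify this kernel with the image of $\overline{F-1} = \bar{F}$ via Lemma \ref{lempbbyf} (iii). The one difference is that the paper sidesteps your self-identified obstacle by working with $\grlog_{n}H^{1}(K,\BZ/p^{s}\BZ)$ rather than $\grlog_{n}H^{1}(K,\BQ/\BZ)$: since $\delta_{s}$ surjects onto $H^{1}(K,\BZ/p^{s}\BZ)$ with kernel $(F-1)W_{s}(K)$ by (\ref{exseqdelta}), one has directly
\[
\grlog_{n}H^{1}(K,\BZ/p^{s}\BZ)\;\simeq\;\fillog_{n}W_{s}(K)\big/\bigl((F-1)W_{s}(K)\cap\fillog_{n}W_{s}(K)+\fillog_{n-1}W_{s}(K)\bigr),
\]
so no separate selection of a representative $b$ at Witt length $s$ is needed; the injectivity of $\phi^{(n)}$ on $\grlog_{n}H^{1}(K,\BQ/\BZ)$ of course restricts to injectivity on the image of $\grlog_{n}H^{1}(K,\BZ/p^{s}\BZ)$, which is all that is used.
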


\begin{proof}
As in the proof of \cite[Proposition 10.7]{as3}, the morphism $\varphi_{s}^{(n)}$ factors through 
\begin{equation}
\grlog_{n}H^{1}(K,\BZ/p^{s}\BZ)\simeq \fillog_{n}W_{s}(K)/((F-1)(W_{s}(K))\cap \fillog_{n}W_{s}(K)+\fillog_{n-1}W_{s}(K)). \notag
\end{equation}
Since this factorization defines the injection $\phi^{(n)}$ in (\ref{refinedswaninj})
by \cite[Proposition 10.14]{as3}
and since the morphism $\bar{F}$ (\ref{barfk}) is equal to 
the morphism $\overline{F-1}$ (\ref{barfmok}),
the assertion follows.
\end{proof}

\begin{defn}
Let $s\ge 0$ and $r\ge 0$ be integers.
We define an increasing filtration $\{\fillog^{(r)}_{n}W_{s}(K)\}_{n\in \mathbf{Z}_{\ge 0}}$ of $W_{s}(K)$ by
\begin{equation}
\label{filrdef}
\fillog_{n}^{(r)}W_{s}(K)=\{a\in W_{s}(K)\; |\;
\ord_{K}(a)\ge -n/p^{r}\}=\fillog_{[n/p^{r}]}W_{s}(K).
\end{equation}
\end{defn}

By (\ref{filrdef}), we have $\fillog_{n}^{(0)}W_{s}(K)=\fillog_{n}W_{s}(K)$ for $n\in \BZ_{\ge 0}$.

For integers $0\le t\le s$,
let $\pr_{t}$ denote the projection
\begin{equation}
\label{prt}
\pr_{t}\colon W_{s}(K)\rightarrow W_{t}(K)
\; ; \; (a_{s-1},\ldots,a_{0})\mapsto (a_{s-1},\ldots,a_{s-t}).
\end{equation}
We put $\grlog_{n}^{(r)}=\fillog_{n}^{(r)}/\fillog_{n-1}^{(r)}$ for $r\in \BZ_{\ge 0}$ and $n\in \BZ_{\ge 1}$.

\begin{lem}
\label{lemfilrep}
Let $r\ge 0$ and $0\le t\le s$ be integers.
Let $\pr_{t}\colon W_{s}(K)\rightarrow W_{t}(K)$ be as in (\ref{prt}).
Let $n\ge 0$ be an integer.
\begin{enumerate}
\item $\pr_{t}(\fillog_{n}W_{s}(K))=\fillog^{(s-t)}_{n}W_{t}(K)$.
\item $(F-1)^{-1}(\fillog_{n}^{(r)}W_{s}(K))=\fillog_{[n/p]}^{(r)}W_{s}(K)$.
\end{enumerate}
\end{lem}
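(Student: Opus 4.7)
The plan is to prove the two parts separately, both by direct computation from Definition \ref{filw} and the filtration (\ref{filrdef}), leaning on Lemmas \ref{lemgaus} and \ref{lempbbyf} for the arithmetic.

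For part (i), I would unfold $\fillog_{n}W_{s}(K)=\{a=(a_{s-1},\ldots,a_{0}) : p^{i}\ord_{K}(a_{i})\ge -n\text{ for all }i\}$ and note that under $\pr_{t}$ the element $a$ is sent to the Witt vector $(a_{s-1},\ldots,a_{s-t})\in W_{t}(K)$, whose component at the lower position $j$ (for $0\le j\le t-1$) is $a_{s-t+j}$. A direct computation gives $\ord_{K}(\pr_{t}(a))=p^{-(s-t)}\min_{s-t\le i\le s-1}p^{i}\ord_{K}(a_{i})$, so the inequality $\ord_{K}(a)\ge -n$ immediately forces $\ord_{K}(\pr_{t}(a))\ge -n/p^{s-t}$. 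This proves the inclusion $\pr_{t}(\fillog_{n}W_{s}(K))\subset \fillog^{(s-t)}_{n}W_{t}(K)$. For the reverse inclusion I would exhibit an explicit lift: given $b=(b_{t-1},\ldots,b_{0})\in \fillog^{(s-t)}_{n}W_{t}(K)$, I would define $a=(b_{t-1},\ldots,b_{0},0,\ldots,0)\in W_{s}(K)$ by padding with $s-t$ zeros in the lower positions, so that $\pr_{t}(a)=b$; the same computation then yields $\ord_{K}(a)=p^{s-t}\ord_{K}(b)\ge -n$, so $a\in \fillog_{n}W_{s}(K)$.

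For part (ii), I would first rewrite both sides via (\ref{filrdef}) as $\fillog^{(r)}_{n}W_{s}(K)=\fillog_{[n/p^{r}]}W_{s}(K)$ and $\fillog^{(r)}_{[n/p]}W_{s}(K)=\fillog_{[[n/p]/p^{r}]}W_{s}(K)$. Applying Lemma \ref{lempbbyf} (iii) to the non-negative integer $[n/p^{r}]$ gives $(F-1)^{-1}(\fillog_{[n/p^{r}]}W_{s}(K))=\fillog_{[[n/p^{r}]/p]}W_{s}(K)$. The two indices agree by Lemma \ref{lemgaus} (ii), which says $[[n/p^{r}]/p]=[[n/p]/p^{r}]$, and the equality follows.

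Neither step is really an obstacle: part (ii) is an immediate concatenation of Lemmas \ref{lempbbyf} (iii) and \ref{lemgaus} (ii), while the only care required in part (i) is in tracking the index shift $i\mapsto i-(s-t)$ when translating between the valuation of a Witt vector in $W_{s}(K)$ and the valuation of its truncation in $W_{t}(K)$.
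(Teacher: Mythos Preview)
Your proof is correct and follows essentially the same approach as the paper. For (i) the paper simply asserts that $\pr_{t}(\fillog_{n}W_{s}(K))=\fillog_{[n/p^{s-t}]}W_{t}(K)$ by the definition (\ref{filwdef}) and then invokes (\ref{filrdef}), which is exactly the computation you spell out with the index shift and the zero-padding lift; for (ii) your argument is identical to the paper's, concatenating Lemma \ref{lempbbyf} (iii) with Lemma \ref{lemgaus} (ii) via (\ref{filrdef}).
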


\begin{proof}
(i) By (\ref{filwdef}), we have $\pr_{t}(\fillog_{n}W_{s}(K))=\fillog_{[n/p^{s-t}]}W_{t}(K)$.
Hence the assertion follows by (\ref{filrdef}).

(ii) By Lemma \ref{lempbbyf} (iii) and (\ref{filrdef}), we have $(F-1)^{-1}(\fillog_{n}^{(r)}W_{s}(K))=
\fillog_{[[n/p^{r}]/p]}W_{s}(K)$.
By Lemma \ref{lemgaus} (ii) and (\ref{filrdef}), the assertion follows.
\end{proof}

Let $n\ge 0$ and $0\le t\le s$ be integers. 
Since $\pr_{t}(\fillog_{n}W_{s}(K))=\fillog^{(s-t)}_{n}W_{t}(K)$
by Lemma \ref{lemfilrep} (i), we have the exact sequence
\begin{equation}
\label{esprvl}
0\rightarrow \fillog_{n}W_{s-t}(K) \xrightarrow{V^{t}} 
\fillog_{n}W_{s}(K) \xrightarrow{\pr_{t}} 
\fillog_{n}^{(s-t)}W_{t}(K) \rightarrow 0.
\end{equation}

\begin{lem}
\label{lemesgrk}
Let $n\ge 1$ be an integer.
Then the exact sequence (\ref{esprvl}) induces the exact sequence
\begin{equation}
0\rightarrow \grlog_{n}W_{s-t}(K)\xrightarrow{\bar{V}^{t}} \grlog_{n}W_{s}(K)
\xrightarrow{\overline{\pr}_{t}} \grlog_{n}^{(s-t)}W_{t}(K) \rightarrow 0,
\notag
\end{equation}
where $\grlog_{n}^{(s-t)}W_{t}(K)$ is equal to $\grlog_{n/p^{s-t}}W_{t}(K)$ if $n\in p^{s-t}\BZ$
and $0$ if $n\notin p^{s-t}\BZ$.
\end{lem}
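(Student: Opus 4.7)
The plan is to apply the snake lemma to the morphism between the short exact sequence (\ref{esprvl}) at level $n-1$ and at level $n$. The key input needed is that $V^{t}$ is strictly compatible with the filtration $\{\fillog_{n}\}$, which will guarantee injectivity of the left-hand map on the graded pieces.

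First, I would observe that $V^{t}$ preserves the valuation $\ord_{K}$: indeed, by definition, $V^{t}(a_{s-t-1},\ldots,a_{0}) = (0,\ldots,0,a_{s-t-1},\ldots,a_{0})$, so only the entries in the lower $s-t$ positions contribute to $\ord_{K}$, and these are unchanged. Consequently $V^{t}(\fillog_{n}W_{s-t}(K)) = V^{t}(W_{s-t}(K)) \cap \fillog_{n}W_{s}(K)$. In particular, if $V^{t}(a) \in \fillog_{n-1}W_{s}(K)$ then already $a \in \fillog_{n-1}W_{s-t}(K)$, so $\bar{V}^{t}$ is injective on the quotient.

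Next, I would form the commutative diagram of short exact sequences
\begin{equation}
\begin{CD}
0 @>>> \fillog_{n-1}W_{s-t}(K) @>{V^{t}}>> \fillog_{n-1}W_{s}(K) @>{\pr_{t}}>> \fillog_{n-1}^{(s-t)}W_{t}(K) @>>> 0 \\
@. @VVV @VVV @VVV @. \\
0 @>>> \fillog_{n}W_{s-t}(K) @>{V^{t}}>> \fillog_{n}W_{s}(K) @>{\pr_{t}}>> \fillog_{n}^{(s-t)}W_{t}(K) @>>> 0
\end{CD}
\notag
\end{equation}
whose vertical maps are the inclusions coming from the filtrations. Since all three vertical inclusions are injective, the snake lemma yields the exactness of the sequence of cokernels, namely the desired
\[
0 \to \grlog_{n}W_{s-t}(K) \xrightarrow{\bar{V}^{t}} \grlog_{n}W_{s}(K) \xrightarrow{\overline{\pr}_{t}} \grlog_{n}^{(s-t)}W_{t}(K) \to 0.
\]

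Finally, I would identify the right-hand term. By (\ref{filrdef}), $\fillog_{n}^{(s-t)}W_{t}(K) = \fillog_{[n/p^{s-t}]}W_{t}(K)$, so Lemma \ref{lemgaus} (i) gives $[n/p^{s-t}] = [(n-1)/p^{s-t}] + 1$ when $n \in p^{s-t}\BZ$, and equality otherwise; this yields $\grlog_{n/p^{s-t}}W_{t}(K)$ or $0$ as stated. There is no substantial obstacle here: the whole argument is essentially formal once strict compatibility of $V^{t}$ with the filtration is verified, which is immediate from the description of $\ord_{K}$ on Witt vectors.
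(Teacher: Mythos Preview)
Your proof is correct and follows essentially the same route as the paper: form the commutative diagram of the two instances of (\ref{esprvl}) at levels $n-1$ and $n$, apply the snake lemma, and identify the right-hand graded piece via (\ref{filrdef}) and Lemma~\ref{lemgaus}~(i). Your first paragraph on strict compatibility of $V^{t}$ is correct but not actually needed: since all three vertical inclusions are injective, the snake lemma already forces the cokernel sequence to start with $0$, so injectivity of $\bar{V}^{t}$ comes for free.
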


\begin{proof}
We consider the commutative diagram
\begin{equation}
\label{comeqseq}
\xymatrix{
0\ar[r] & \fillog_{n-1}W_{s-t}(K) \ar[r]^-{V^{t}} \ar[d] &
\fillog_{n-1}W_{s}(K) \ar[r]^-{\pr_{t}} \ar[d] &
\fillog_{n-1}^{(s-t)}W_{t}(K) \ar[r] \ar[d] & 0 \\
0\ar[r] & \fillog_{n}W_{s-t}(K) \ar[r]^-{V^{t}} &
\fillog_{n}W_{s}(K) \ar[r]^-{\pr_{t}} &
\fillog_{n}^{(s-t)}W_{t}(K) \ar[r] & 0,
}
\end{equation}
where the horizontal lines are exact and the vertical arrows are inclusions.
By applying the snake lemma to (\ref{comeqseq}), we obtain the exact
sequence which we have desired.
The last supplement to $\grlog_{n}^{(s-t)}W_{t}(K)$
follows by Lemma \ref{lemgaus} (i) and (\ref{filrdef}).
\end{proof}

\subsection{Local theory: non-logarithmic case}
\label{ssloclnlog}

We recall a non-logarithmic variant, given by Matsuda (\cite{ma}), of Kato's logarithmic 
ramification theory recalled in Subsection \ref{ssloclog},
and we consider the exceptional case of Matsuda's theory.
We also consider the graded quotients of filtrations.
We keep the notation in Subsection \ref{ssloclog}.

\begin{defn}[cf.\ {\cite[3.1]{ma}}]
\label{filpw}
We define an increasing filtration $\{ \fil_{m}W_{s}(K) \}_{m \in\mathbf{Z}_{\geq 1}}$ of $W_{s}(K)$ by
\begin{equation}
\label{filpwdef}
\fil_{m}W_{s}(K) = \fillog_{m-1}W_{s}(K) + 
V^{s-s^{\prime}}\fillog_{m}W_{s^{\prime}}(K).
\end{equation}
Here $s^{\prime}= \mathrm{min} \{\mathrm{ord}_{p}(m), s\}$.
\end{defn}

The definition of $\{ \fil_{m}W_{s}(K) \}_{m \in \mathbf{Z}_{\geq 1}}$ in 
Definition \ref{filpw} is shifted by 1 from Matsuda's definition 
(\cite[3.1]{ma}).
Since $\fillog_{n}W_{s}(K)$ is a submodule of $W_{s}(K)$ for $n\in \BZ$, 
the subset $\fil_{m}W_{s}(K)$ is a submodule of $W_{s}(K)$ for $m\in \BZ_{\ge 1}$.

By (\ref{filpwdef}),
we have
\begin{equation}
\label{seqofwsk}
\fillog_{m-1}\wsk \subset \fil_{m} \wsk \subset \fillog_{m} \wsk 
\end{equation}
for $m\in \mathbf{Z}_{\ge 1}$.
Since $\min \{ \ord_{p}(1), s \} = 0$ for $s \in \BZ_{\geq 0}$, we have 
\begin{equation}
\label{filzwfilpow}
\fillog_{0} W_{s}(K)=\fil_{1}W_{s}(K).
\end{equation} 

\begin{defn}[{cf.\ \cite[Definition 3.1.1]{ma}}]
Let $\delta_{s}$ be as in (\ref{delts}).
\label{deffilpho}
\begin{enumerate}
\item We define an increasing filtration $\{\fil_{m}H^{1}(K,\BZ/p^{s}\BZ)\}_{m\in \BZ_{\ge 1}}$
of $H^{1}(K,\BZ/p^{s}\BZ)$ by 
\begin{equation}
\fil_{m}H^{1}(K,\BZ/p^{s}\BZ)=\delta_{s}(\fil_{m}W_{s}(K)). \notag
\end{equation}
\item We define an increasing filtration $\{ \fil_{m}H^{1}(K,\mathbf{Q}/\mathbf{Z}) \}_{m \in \mathbf{Z}_{\geq 1}}$ of $H^{1}(K,\mathbf{Q}/\mathbf{Z})$ by
\begin{equation}
\fil_{m}H^{1}(K,\mathbf{Q}/\mathbf{Z}) = H^{1}(K,\mathbf{Q}/\mathbf{Z})\{p^{\prime}\} + \bigcup_{s \geq 1} \delta_{s}(\fil_{m}W_{s}(K)),
\end{equation}
where $H^{1}(K,\mathbf{Q}/\mathbf{Z})\{p^{\prime}\}$ denotes the prime-to-$p$ part of $H^{1}(K,\mathbf{Q}/\mathbf{Z})$.
\end{enumerate}
\end{defn}

By (\ref{seqofwsk}),
we have 
\begin{equation}
\label{seqofsethone}
\fillog_{m-1}\hok \subset \fil_{m} \hok \subset \fillog_{m} \hok 
\end{equation}
for $m\in \mathbf{Z}_{\ge 1}$.
By (\ref{filzwfilpow}), we have $\fillog_{0} \hok =\fil_{1} \hok$.

\begin{defn}[cf.\ {\cite[Definition 3.2.5]{ma}}]
\label{dsap}
Let $\chi$ be an element of $H^{1}(K,\mathbf{Q}/\mathbf{Z})$.
We define the \textit{total dimension} $\dt(\chi)$ of $\chi$ by
$\art(\chi) = \min \{ m\in \mathbf{Z}_{\geq 1}\ |\ \chi \in \fil_{m}H^{1}(K,\mathbf{Q}/\mathbf{Z})\}$.
\end{defn}

\begin{defn}
\label{deffilpomega}
We define an increasing filtration $\{ \fil_{m} \Omega_{K}^{1}\}_{m\in \mathbf{Z}_{\geq 1}}$ of $\Omega^{1}_{K}$ by
\begin{equation}
\fil_{m}\Omega^{1}_{K} = \{\gamma/\pi^{m} \ |\ \gamma \in \Omega_{\mathcal{O}_{K}}^{1} \} = \mathfrak{m}^{-m}\Omega_{\mathcal{O}_{K}}^{1},\notag 
\end{equation}
where $\pi$ is a uniformizer of $K$ and $\mathfrak{m}$ is the maximal ideal of $\dvr_{K}$.
\end{defn}

Since $\mathfrak{m}\Omega^{1}_{\dvr_{K}}(\log )\subset \Omega^{1}_{\dvr_{K}}\subset
\Omega^{1}_{\dvr_{K}}(\log)$, we have
\begin{equation}
\label{filomseq}
\fillog_{m-1}\Omega^{1}_{K} \subset \fil_{m} \Omega^{1}_{K} \subset \fillog_{m} \Omega^{1}_{K} 
\end{equation}
for $m\in \mathbf{Z}_{\ge 1}$.

We consider the morphism (\ref{fsd}).
The morphism (\ref{fsd}) satisfies 
$-F^{s-1}d(\fil_{m}\wsk)\subset \fil_{m}\oko$ for $m\in \BZ_{\ge 1}$.
We put $\gr_{m} = \fil_{m}/\fil_{m-1}$ for $m\in \BZ_{\geq 2}$.
Then, for $m\in \BZ_{\ge 2}$, the morphism (\ref{fsd}) induces
\begin{equation}
\label{vphipk}
{\varphi^{\prime}_{s}}^{(m)} \colon \gr_{m}\wsk \rightarrow \gr_{m} \Omega_{K}^{1}. 
\end{equation}

Let $\delta_{s}^{\prime (m)}\colon \gr_{m}W_{s}(K)\rightarrow \gr_{m}H^{1}(K,\mathbf{Q}/\mathbf{Z})$  
denote the morphism induced by $\delta_{s}$ (\ref{delts}) for $m\in \mathbf{Z}_{\ge 2}$.
If $(p,m)\neq (2,2)$, there exists a unique injection 
$\phi^{\prime (m)}\colon \gr_{m} \hok \rightarrow \gr_{m} \oko$ such that the diagram 
\begin{equation}
\label{charforminj}
\xymatrix{
\gr_{m}W_{s}(K) \ar[rr]^{\varphi_{s}^{\prime (m)}} \ar[rd]_{\delta_{s}^{\prime (m)}} & & \gr_{m}\Omega^{1}_{K} \\
&\gr_{m}\hok \ar[ru]_{\phi^{\prime (m)}} & 
}
\end{equation}
is commutative for any $s\in \mathbf{Z}_{\ge 0}$ by \cite[Proposition 3.2.3]{ma}.
We note that $\gr_{m}\Omega^{1}_{K}\simeq \mathfrak{m}^{-m}\Omega^{1}_{\dvr_{K}}\otimes_{\dvr_{K}}F_{K}$ is a vector space over $F_{K}$.

We consider the exceptional case where $(p,m)=(2,2)$.

\begin{lem}
\label{gr1l}
Let $s\ge 1$ be an integer.
Assume that $p=2$.
Then $V^{s-1}\colon K\rightarrow W_{s}(K)$ induces an isomorphism
$\gr_{2}K\rightarrow \gr_{2}W_{s}(K)$.
\end{lem}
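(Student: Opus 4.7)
The plan is to compute $\fil_1$ and $\fil_2$ of both $K$ and $W_s(K)$ explicitly from Definition \ref{filpw} for $p=2$, and then verify that the map induced by $V^{s-1}$ is an isomorphism by coordinate arguments plus one application of Lemma \ref{lemfilrep}(i). Since $\ord_2(1)=0$ and $\ord_2(2)=1$, one reads off $\fil_1 W_s(K)=\fillog_0 W_s(K)$ and, for $s\ge 1$,
\[
\fil_2 W_s(K)=\fillog_1 W_s(K)+V^{s-1}\fillog_2 K,
\]
and the special case $s=1$ gives $\fil_1 K=\fillog_0 K$ and $\fil_2 K=\fillog_2 K$, so $\gr_2 K=\fillog_2 K/\fillog_0 K$.

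Since $V^{s-1}(a)=(0,\ldots,0,a)$ is concentrated in the $0$-th coordinate, $V^{s-1}$ obviously sends $\fillog_2 K$ into $\fil_2 W_s(K)$ by the displayed formula and sends $\fillog_0 K=\dvr_K$ into $\fillog_0 W_s(K)=\fil_1 W_s(K)$; the same coordinate description gives the converse $V^{s-1}(a)\in \fillog_0 W_s(K)\Rightarrow a\in \dvr_K$, so the induced map $\bar V^{s-1}\colon \gr_2 K\to \gr_2 W_s(K)$ is well-defined and injective. Surjectivity is equivalent to the inclusion
\[
\fillog_1 W_s(K)\subset V^{s-1}\fillog_1 K+\fillog_0 W_s(K),
\]
since then $\fil_2 W_s(K)\subset V^{s-1}\fillog_2 K+\fillog_0 W_s(K)$, the reverse containment being immediate.

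To establish this inclusion I would use the short exact sequence $0\to V^{s-1}K\to W_s(K)\xrightarrow{\pr_{s-1}}W_{s-1}(K)\to 0$. Lemma \ref{lemfilrep}(i) applied with $t=s-1$ gives $\pr_{s-1}(\fillog_1 W_s(K))=\fillog_{[1/2]}W_{s-1}(K)=\fillog_0 W_{s-1}(K)=\pr_{s-1}(\fillog_0 W_s(K))$, the crucial point being $[1/2]=0$. Hence any $y\in \fillog_1 W_s(K)$ can be corrected by some $z\in \fillog_0 W_s(K)$ with the same image under $\pr_{s-1}$, so that $y-z\in \ker(\pr_{s-1})=V^{s-1}K$; writing $y-z=V^{s-1}(w)$ and using the identity $\ord_K(V^{s-1}(w))=\ord_K(w)$ together with $y-z\in \fillog_1 W_s(K)$ forces $w\in \fillog_1 K$, which completes the proof.

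The main point is the equality $\pr_{s-1}(\fillog_1 W_s(K))=\pr_{s-1}(\fillog_0 W_s(K))$, which is precisely what is special at $p=2$ (it would fail for $p\ge 3$, where one would get $\fillog_{[1/p]}=\fillog_0$ but $\fil_2$ would not even contain the Verschiebung piece); once this is granted, the remaining verifications are routine coordinate computations, so I do not anticipate a serious obstacle.
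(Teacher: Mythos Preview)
Your proof is correct and follows essentially the same approach as the paper: both reduce the surjectivity to the inclusion $\fillog_{1}W_{s}(K)\subset \fillog_{0}W_{s}(K)+V^{s-1}\fillog_{1}K$, which the paper obtains by citing the already-established identity (\ref{fillogw}) with $n=1$, while you rederive it via Lemma~\ref{lemfilrep}(i) and the $V^{s-1}/\pr_{s-1}$ exact sequence. One small remark: in your closing parenthetical, the equality $\pr_{s-1}(\fillog_{1}W_{s}(K))=\pr_{s-1}(\fillog_{0}W_{s}(K))$ does not actually fail for $p\ge 3$ (since $[1/p]=0$ regardless); what is special to $p=2$ is only that $\ord_{p}(2)=1$, so that the Verschiebung piece $V^{s-1}\fillog_{2}K$ appears in $\fil_{2}W_{s}(K)$ at all.
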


\begin{proof}
Since $p=2$, we have $s'=\min\{\ord_{p}(2),s\}=1$.
Hence we have
\begin{align}
\fil_{2}\wsk &= \fillog_{1}\wsk + V^{s-1}\fillog_{2}K \notag\\
&=\fil_{1}W_{s}(K)+V^{s-1}\fillog_{2}K \notag
\end{align} 
by (\ref{filpwdef}) applied at the first equality and by (\ref{fillogw}) and (\ref{filzwfilpow})
applied at the second equality.
Since $\fillog_{2}K=\fil_{2}K$ by (\ref{filpwdef}), the assertion follows.
\end{proof}

\begin{prop}
\label{propnrar}
Assume that $p=2$.
Let $F_{K}^{1/2}\subset \bar{F}_{K}$ denote the subfield of an algebraic 
closure $\bar{F}_{K}$ of $F_{K}$ consisting of the square roots 
of $F_{K}$.
\begin{enumerate}
\item There exists a unique morphism 
\begin{equation}
\tilde{\varphi}^{\prime (2)}_{s} : \gr_{2} \wsk \rightarrow \gr_{2} \oko \otimes_{F_{K}} F_{K}^{1/2} \notag
\end{equation}
such that $\tilde{\varphi}^{\prime (2)}_{s}(\bar{a})=
-da_{0}+\sqrt{\overline{\pi^{2}a_{0}}}d\pi/\pi^{2}$
for every $\bar{a}\in \gr_{2}W_{s}(K)$ whose lift in $\fil_{2}W_{s}(K)$
is $a=(0,\ldots, 0,a_{0})$
and for every uniformizer $\pi\in K$. 
Here $\sqrt{\overline{\pi^{2}a_{0}}}\in F_{K}^{1/2}$ denotes the square root of the image 
$\overline{\pi^{2}a_{0}}$ of $\pi^{2}a_{0}$ in $F_{K}$.
\item There exists a unique injection $\tilde{\phi}^{\prime (2)}\colon \gr_{2}\hok \rightarrow \gr_{2} \oko \otimes_{F_{K}} F_{K}^{1/2}$ such that the following diagram is commutative for every $s\ge 0$:
\begin{equation}
\label{spcfinj}
\xymatrix{
\gr_{2}W_{s}(K) \ar[rr]^{\tilde{\varphi}^{\prime (2)}_{s}} \ar[rd]_{\delta_{s}^{\prime (2)}} & & \gr_{2}\Omega^{1}_{K}\otimes_{F_{K}}F_{K}^{1/2}\\
& \gr_{2}H^{1}(K,\mathbf{Q}/\mathbf{Z}). \ar[ru]_{\tilde{\phi}^{\prime (2)}} &} 
\end{equation}
\end{enumerate}
\end{prop}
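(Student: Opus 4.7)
The plan is to reduce (i) to a definition on the last Witt coordinate via Lemma~\ref{gr1l}, and then deduce (ii) by checking that the resulting formula annihilates the kernel of $\delta_s^{\prime(2)}$. Since $p=2$, Lemma~\ref{gr1l} gives an isomorphism $V^{s-1}\colon \gr_2 K \xrightarrow{\sim} \gr_2 W_s(K)$, so every class $\bar a\in \gr_2 W_s(K)$ admits a lift of the form $V^{s-1}(a_0)=(0,\ldots,0,a_0)$ with $a_0\in \fillog_2 K$, and this lift is unique modulo $\dvr_K=\fillog_0 K$ since $\fil_1 W_s(K)=\fillog_0 W_s(K)$. I would define $\tilde\varphi_s^{\prime(2)}(\bar a)$ by the stated formula on such a lift and check well-definedness: replacing $a_0$ by $a_0+b$ with $b\in\dvr_K$ changes $-da_0$ by $-db\in \Omega^1_{\dvr_K}\subset \fil_1 \Omega^1_K$, which vanishes in $\gr_2 \Omega^1_K$, and leaves $\overline{\pi^2 a_0}$ unchanged in $F_K$ since $\pi^2 b\in\mathfrak{m}^2$. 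Independence of the choice of uniformizer is a direct computation using that $\sqrt{\overline{(v\pi)^2 a_0}}=\bar v\,\sqrt{\overline{\pi^2 a_0}}$ for $v\in\dvr_K^\times$.

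For (ii), uniqueness of $\tilde\phi^{\prime(2)}$ is automatic from the surjectivity of $\delta_s^{\prime(2)}$ onto $\gr_2 H^1(K,\mathbf{Q}/\mathbf{Z})$, so the real content is to verify that $\tilde\varphi_s^{\prime(2)}$ vanishes on $\ker\delta_s^{\prime(2)}$. By (\ref{exseqdelta}) together with a non-logarithmic variant of Lemma~\ref{exseqgrwk}, this kernel is generated modulo $\fil_1 W_s(K)$ by classes of $(F-1)V^{s-1}(b_0)=V^{s-1}(b_0^2-b_0)$ for $b_0\in\fillog_1 K$. Writing $b_0=u/\pi$ with $u\in\dvr_K$, a short computation in characteristic $2$ gives $-d(b_0^2-b_0)=db_0\equiv \bar u\cdot d\pi/\pi^2$ in $\gr_2 \Omega^1_K$ (since $du/\pi\in\fil_1\Omega^1_K$), while $\overline{\pi^2(b_0^2-b_0)}=\overline{u^2-\pi u}=\bar u^2$, so $\sqrt{\overline{\pi^2(b_0^2-b_0)}}=\bar u\in F_K\subset F_K^{1/2}$. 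The two contributions $\bar u\cdot d\pi/\pi^2$ then sum to $2\bar u\cdot d\pi/\pi^2=0$ in characteristic $2$. Injectivity of $\tilde\phi^{\prime(2)}$ follows from the reverse inclusion, which can be obtained by a dimension count or by comparing with the log-analog map $\phi^{(2)}$ in (\ref{refinedswaninj}).

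The main obstacle is the design of the correction term $\sqrt{\overline{\pi^2 a_0}}\cdot d\pi/\pi^2$. Although the naive formula $-da_0$ is already well-defined on $\gr_2 K$, it fails to vanish on the Frobenius kernel, as the identity $-d(b_0^2-b_0)=db_0\not\equiv 0$ in $\gr_2 \Omega^1_K$ shows. The correction is tailored precisely to kill this residual contribution, since for $a_0=b_0^2-b_0$ the element $\overline{\pi^2 a_0}=\bar u^2$ is automatically a perfect square whose root $\bar u$ matches $db_0$. Extending the target to $\gr_2 \Omega^1_K\otimes_{F_K} F_K^{1/2}$ is unavoidable for general $a_0$ because $\overline{\pi^2 a_0}$ need not be a square in the possibly imperfect residue field $F_K$; the compatibility between this extension and the Frobenius kernel (where squares always appear) is what ultimately makes the injection $\tilde\phi^{\prime(2)}$ well-defined and accounts for the exceptional nature of the case $(p,m)=(2,2)$.
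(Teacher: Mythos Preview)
Your argument for (i) and for the vanishing of $\tilde\varphi_s^{\prime(2)}$ on the image of $F-1$ matches the paper's proof: both reduce to $s=1$ via Lemma~\ref{gr1l}, verify well-definedness modulo $\fil_1 K=\mathcal{O}_K$, check independence of the uniformizer, and compute directly on $(F-1)(b_0)$ with $b_0\in\fillog_1 K$.

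The gap is in your justification of injectivity of $\tilde\phi^{\prime(2)}$, i.e.\ the reverse inclusion $\Ker\tilde\varphi_1^{\prime(2)}\subset\text{image of }(F-1)(\fillog_1 K)$ in $\gr_2 K$. A dimension count does not apply: the image $\{\bar u^{2}\pi^{-2}+\bar u\,\pi^{-1}:\bar u\in F_K\}$ is not an $F_K$-subspace of $\gr_2 K$ (the map $\bar u\mapsto \bar u^{2}$ is only Frobenius-semilinear), and over $\mathbf{F}_2$ everything is infinite-dimensional unless $F_K$ is finite. The comparison with $\phi^{(2)}$ can be made to work, but it is not automatic: passing to $\grlog_2$ kills the correction term $\sqrt{\overline{\pi^2 a_0}}\,d\pi/\pi^2$ (since $d\pi/\pi^2\in\fillog_1\Omega^1_K$), so injectivity of $\phi^{(2)}$ only shows that a class in $\Ker\tilde\varphi_1^{\prime(2)}$ may be modified by $(F-1)(\fillog_1 K)$ to lie in $\fillog_1 K/\fillog_0 K$. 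One still needs the computation that for $c=c'/\pi\in\fillog_1 K$ one has $\tilde\varphi_1^{\prime(2)}(\bar c)=\bar{c'}\,d\pi/\pi^2$, forcing $\bar{c'}=0$. The paper carries this out directly: writing $a=a'/\pi^2$, if $a'\in\mathcal{O}_K^\times$ then the vanishing $-da+\sqrt{\bar{a'}}\,d\pi/\pi^2=0$ in $\gr_2\Omega^1_K\otimes_{F_K}F_K^{1/2}$ forces $\sqrt{\bar{a'}}\in F_K$, so $a$ differs from some $(F-1)(a''/\pi)$ by an element of $\fillog_1 K$, and the residual case $a\in\fillog_1 K$ is the easy computation just mentioned. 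This case analysis is the actual content of the injectivity and should be spelled out.
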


\begin{proof}
By Lemma \ref{gr1l}, we may assume that $s=1$.

(i) Let $a$ be an element of $\fil_{2}K$
and $\pi$ a uniformizer of $K$.
Since $p=2$, we have $\fil_{2}K=\fillog_{2}K$ by (\ref{filpwdef}).
Hence we have $\pi^{2}a\in \dvr_{K}$ by (\ref{filwdef}).
Since $-d(\fil_{2}K)\subset \fil_{2}\Omega_{K}^{1}$,
we have $-da+\sqrt{\overline{\pi^{2}a}}d\pi/\pi^{2}
\in \gr_{2} \oko \otimes_{F_{K}} F_{K}^{1/2}$.
If $a\in \fil_{1}K$, we have $a\in \dvr_{K}$ by (\ref{filwdef}) and (\ref{filzwfilpow}).
Since $-d(\fil_{1}K)\subset \fil_{1}\Omega^{1}_{K}$,
we have
$-da+\sqrt{\overline{\pi^{2}a}}d\pi/\pi^{2}=0$ in $\gr_{2} \oko \otimes_{F_{K}} F_{K}^{1/2}$.
For $a,b\in \fil_{2}K$, we have $\sqrt{\overline{\pi^{2}(a+b)}}=\sqrt{\overline{\pi^{2}a}}+
\sqrt{\overline{\pi^{2}b}}$, since $p=2$.

We prove that $\sqrt{\overline{\pi^{2}a}}d\pi/\pi^{2}$ is
independent of the choice of a uniformizer $\pi$ of $K$. 
Let $u\in\dvr_{K}^{\times}$ be a unit.
Then, in $\gr_{2}\Omega^{1}_{K}\otimes_{F_{K}}F_{K}^{1/2}$,
we have
\begin{equation}
\sqrt{\overline{(u\pi)^{2}a}}d(u\pi )/(u\pi)^{2} = u\sqrt{\overline{\pi^{2}a}}ud\pi/(u\pi)^{2} 
= \sqrt{\overline{\pi^{2}a}}d\pi/\pi^{2}. \notag
\end{equation}
Hence the assertion follows.

(ii) Since $p=2$ and $\fil_{2}K= \fillog_{2}K$, we have $\fil_{2}K \cap (F-1)(K)
=(F-1)(\fillog_{1}K)$ by Lemma \ref{lempbbyf} (iii).
Hence it is sufficient to prove that
$\Ker\tilde{\varphi}_{1}^{\prime (2)}$ is the image of
$(F-1)(\fillog_{1}K)$ in $\gr_{2}K$.

Let $a$ be an element of $\fillog_{1}K$.
By (\ref{filwdef}), we may put $a=a'/\pi$, where $a'\in \dvr_{K}$.
Then we have 
\begin{equation}
\label{eqimasa}
\tilde{\varphi}_{1}^{\prime (2)}(\bar{a}^{2}-\bar{a})=-\bar{a'}d\pi/\pi^{2}+\sqrt{\bar{a'}^{2}}d\pi/\pi^{2}=0.
\end{equation}

Conversely, let $a\in \fil_{2}K$ be a lift of an element of $\Ker\tilde{\varphi}_{1}^{\prime (2)}$.
Since $\fil_{2}K= \fillog_{2}K$, we can put $a=a'/\pi^{2}$, where $a'\in \dvr_{K}$, by (\ref{filwdef}).
Suppose that $\ord_{K}(a')>0$, that is $a\in \fillog_{1}W_{s}(K)$.
Since $\tilde{\varphi}_{1}^{\prime (2)}(\bar{a})
=-(a'\pi^{-1})d\pi/\pi^{2}=0$,
we have $a'\pi^{-1}=0$ in $F_{K}$.
Hence $a\in \fillog_{0}K=\fil_{1}K$, that is $\bar{a}=0$ in $\gr_{2}K$.

Assume that $a'\in \dvr_{K}^{\times}$ is a unit.
Since we have
\begin{equation}
\label{phitto}
\tilde{\varphi}_{1}^{\prime (2)}(\bar{a})=-da+\sqrt{\bar{a'}}d\pi/\pi^{2}
=0, 
\end{equation}
we have $\sqrt{\bar{a'}}\in F_{K}$.
Hence there exist a unit $a''\in \dvr_{K}^{\times}$ and an element $b\in \fillog_{1}K$ such that $a=(F-1)(a''/\pi)+b$.
By (\ref{eqimasa}) and (\ref{phitto}), we have $\tilde{\varphi}_{1}^{\prime (2)}(\bar{b})=0$.
Hence we have $b\in \fil_{1}K$ by the case where $\ord_{K}(a')>0$, which is proved above.
Therefore $\bar{a}\in\gr_{2}K$ is the image of an element of $(F-1)(\fillog_{1}K)$. 
\end{proof}

Let $m\ge 2$ be an integer.
By abuse of notation, we write 
\begin{equation}
\label{phipgen}
\phi'^{(m)}\colon \gr_{m}H^{1}(K,\mathbf{Q}/\mathbf{Z})\rightarrow \gr_{m}\Omega^{1}_{K}
\otimes_{F_{K}}F_{K}^{1/p}
\end{equation}
for the composition of $\phi'^{(m)}$ in (\ref{charforminj}) and
the inclusion $\gr_{m}\Omega^{1}_{K}\rightarrow \gr_{m}\Omega^{1}_{K}\otimes_{F_{K}}F_{K}^{1/p}$ 
if $(p,m)\neq(2,2)$
and $\tilde{\phi}'^{(2)}$ in Proposition \ref{propnrar} (ii) if $(p,m)=(2,2)$.

\begin{defn}
\label{localcharform}
Let $\chi$ be an element of $\hok$.
We put $m=\dt(\chi)$ and assume that $m\ge 2$.
We define the \textit{characteristic form} $\cform(\chi)\in \gr_{m}\Omega^{1}_{K}\otimes_{F_{K}}F_{K}^{1/p}$ of $\chi$
to be the image of $\chi$ by $\phi'^{(m)}$ (\ref{phipgen}).
\end{defn}

By (\ref{charforminj}) and Proposition \ref{propnrar},
we need $F^{1/p}_{K}$ only in the case where $p=2$ and
$\chi \in \fil_{2}H^{1}(K,\mathbf{Q}/\mathbf{Z})- \fillog_{1}H^{1}(K,\mathbf{Q}/\mathbf{Z})$.

In the rest of this subsection, we prepare some lemmas for the proof of Proposition \ref{lemsone}.

\begin{defn}
Let $s\ge 0$ and $r\ge 0$ be integers.
We put $r'=\min\{\ord_{p}(m), s+r\}$ and $s''=\max \{0, r'-r\}$.
We define increasing filtrations $\{\fil^{(r)}_{m}W_{s}(K)\}_{m\in \mathbf{Z}_{\ge 1}}$ 
and $\{\fillog''^{(r)}_{m}W_{s}(K)\}_{m\in \BZ_{\ge 1}}$ of $W_{s}(K)$ by
\begin{align}
\label{filprdef}
\fil^{(r)}_{m}W_{s}(K)&=\fillog_{m-1}^{(r)}W_{s}(K)+V^{s-s''}
\fillog_{m}^{(r)}W_{s''}(K), \\ 
\label{filpprwk}
\fillog_{m}''^{(r)}W_{s}(K)&=\fillog_{[(m-1)/p]}^{(r)}W_{s}(K)
+V^{s-s''}\fillog_{[m/p]}^{(r)}W_{s''}(K).
\end{align}
If $r=0$, then we simply write $\fillog''_{m}W_{s}(K)$ for $\fillog_{m}''^{(0)}W_{s}(K)$.
\end{defn}

If $r=0$, since $s''=s' =\min \{\ord_{p} (m),s\}$, we have $\fillog_{m}'^{(0)}W_{s}(K)=\fil_{m}W_{s}(K)$.
Further we have
\begin{equation}
\label{filppwk}
\fillog''_{m}W_{s}(K)=\fillog_{[(m-1)/p]}W_{s}(K)+V^{s-s'}\fillog_{[m/p]}W_{s'}(K).
\end{equation}

\begin{lem}
\label{lemfilprep}
Let $r\ge 0$ and $0\le t\le s$ be integers.
Let $\pr_{t}\colon W_{s}(K)\rightarrow W_{t}(K)$ be as in (\ref{prt}).
Let $m\ge 1$ be an integer.
\begin{enumerate}
\item $\pr_{t}(\fil_{m}W_{s}(K))=\fil^{(s-t)}_{m}W_{t}(K)$. 
\item We have the exact sequence
\begin{equation}
\label{esprvnl}
0\rightarrow \fil_{m}W_{s-t}(K) \xrightarrow{V^{t}} \fil_{m}W_{s}(K)
\xrightarrow{\pr_{t}} \fillog_{m}'^{(s-t)}W_{t}(K)\rightarrow 0.
\end{equation} 
\item $\pr_{t}(\fillog''_{m}W_{s}(K))=\fillog_{m}''^{(s-t)}W_{t}(K)$.
\item We have the exact sequence
\begin{equation}
\label{esprvpp}
0\rightarrow \fillog''_{m}W_{s-t}(K) \xrightarrow{V^{t}} \fillog''_{m}W_{s}(K)
\xrightarrow{\pr_{t}} \fillog_{m}''^{(s-t)}W_{t}(K)\rightarrow 0. 
\end{equation}
\item $\fillog_{m}''^{(r)}W_{s}(K)=(F-1)^{-1}(\fil^{(r)}_{m}W_{s}(K))$. 
Especially, $\fillog''_{m}W_{s}(K)=(F-1)^{-1}(\fil_{m}W_{s}(K))$.
\end{enumerate}
\end{lem}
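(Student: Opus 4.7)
The plan is to reduce each of (i)--(v) to Lemma~\ref{lemfilrep}, Lemma~\ref{lempbbyf}, Lemma~\ref{lemgaus}, and the exact sequence \eqref{esprvl}, via careful bookkeeping with the truncation parameter $s''$ in the definitions \eqref{filprdef} and \eqref{filpprwk}.

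For (i), I would apply $\pr_t$ to each summand of $\fil_m W_s(K) = \fillog_{m-1} W_s(K) + V^{s-s'}\fillog_m W_{s'}(K)$. The first summand is handled directly by Lemma~\ref{lemfilrep}(i). For the second, the identity $\pr_t \circ V^{s-s'} = V^{s-s'} \circ \pr_{t-s+s'}$ (valid for $t \geq s-s'$; the image is zero otherwise), combined with Lemma~\ref{lemfilrep}(i) applied to $W_{s'}(K)$, yields $V^{s-s'}\fillog^{(s-t)}_m W_{s'-s+t}(K)$. Unwinding \eqref{filprdef} for $\fil^{(s-t)}_m W_t(K)$ gives $r' = s'$ and $s'' = \max\{0, s'-s+t\}$, which exactly matches the case analysis above. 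Part (ii) then follows from (i) via the snake lemma applied to a diagram analogous to \eqref{comeqseq} (with $\fil$ in place of $\fillog$), or equivalently by directly checking $V^t W_{s-t}(K) \cap \fil_m W_s(K) = V^t \fil_m W_{s-t}(K)$, using that $V^t$ preserves $\ord_K$ and hence intertwines the $\fillog_n$-filtrations.

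Parts (iii) and (iv) follow by exactly the same arguments with $[(m-1)/p]$ and $[m/p]$ replacing $m-1$ and $m$; the parameter $s''$ is unchanged, so no additional bookkeeping is required.

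Part (v) is the main obstacle. For the inclusion $\fillog''^{(r)}_m W_s(K) \subset (F-1)^{-1}(\fil^{(r)}_m W_s(K))$: since $F$ and $V$ commute on Witt vectors in characteristic $p$ (so $F V^{s-s''} = V^{s-s''} F$), together with Lemma~\ref{lempbbyf}(i) and the identity $[[(m-1)/p]/p^r] = [(m-1)/p^{r+1}]$ from Lemma~\ref{lemgaus}(ii), one sees that $F$ sends $\fillog^{(r)}_{[(m-1)/p]} W_s(K)$ into $\fillog^{(r)}_{m-1} W_s(K)$ and $V^{s-s''}\fillog^{(r)}_{[m/p]} W_{s''}(K)$ into $V^{s-s''}\fillog^{(r)}_m W_{s''}(K)$. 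Since both summands of $\fillog''^{(r)}_m W_s(K)$ already lie in $\fil^{(r)}_m W_s(K)$ by monotonicity, the difference $(F-1)(a)$ stays in $\fil^{(r)}_m W_s(K)$. For the reverse inclusion, start from $(F-1)(a) \in \fil^{(r)}_m W_s(K) \subset \fillog^{(r)}_m W_s(K)$; Lemma~\ref{lemfilrep}(ii) then gives $a \in \fillog^{(r)}_{[m/p]} W_s(K)$. Apply $\pr_{s-s''}$: the $V^{s-s''}$-summand of $\fil^{(r)}_m W_s(K)$ dies, and Lemma~\ref{lemfilrep}(i) combined with iterated Lemma~\ref{lemgaus}(ii) identifies $\pr_{s-s''}(\fil^{(r)}_m W_s(K))$ with $\fillog^{(r+s'')}_{m-1} W_{s-s''}(K)$; a second application of Lemma~\ref{lemfilrep}(ii) then places $\pr_{s-s''}(a)$ in $\fillog^{(r+s'')}_{[(m-1)/p]} W_{s-s''}(K)$. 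By the surjectivity part of Lemma~\ref{lemfilrep}(i), lift this to $a' \in \fillog^{(r)}_{[(m-1)/p]} W_s(K)$; then $a - a' \in \Ker \pr_{s-s''} \cap \fillog^{(r)}_{[m/p]} W_s(K) = V^{s-s''} \fillog^{(r)}_{[m/p]} W_{s''}(K)$ by \eqref{esprvl}, whence $a = a' + (a - a') \in \fillog''^{(r)}_m W_s(K)$, as required.
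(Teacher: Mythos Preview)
Your proposal is correct and follows essentially the same route as the paper. Parts (i)--(iv) match the paper's argument almost verbatim: apply Lemma~\ref{lemfilrep}(i) summand by summand, keep track of the truncation index $s''$, and deduce the exact sequences from the corresponding sequences for $\fillog$. For part (v), your forward inclusion via $F(\fillog_{[n/p]})\subset\fillog_{n}$ and monotonicity is equivalent to the paper's appeal to Lemma~\ref{lempbbyf}(iii) on each summand; your reverse inclusion, where you lift $\pr_{s-s''}(a)$ to $a'\in\fillog^{(r)}_{[(m-1)/p]}W_{s}(K)$ and show $a-a'\in V^{s-s''}\fillog^{(r)}_{[m/p]}W_{s''}(K)$, is precisely the snake-lemma step in the paper's commutative diagram written out by hand. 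There is no substantive difference in strategy.
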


\begin{proof}
We put $s'=\min\{\ord_{p}(m),s\}$,
$r'=\min\{\ord_{p}(m), s+r\}$, and $s''=\max \{0, r'-r\}$.

(i) By (\ref{filprdef}), we have
$\fillog_{m}'^{(s-t)}W_{t}(K)=\fillog_{m-1}^{(s-t)}W_{t}(K)$ if $t\le s-s'$ and
$\fillog_{m}'^{(s-t)}W_{t}(K)=
\fillog_{m-1}^{(s-t)}W_{t}(K)+V^{s-s'}\fillog_{m}^{(s-t)}W_{t-s+s'}(K)$ if $t>s-s'$.
By Lemma \ref{lemfilrep} (i), we have $\pr_{t}(\fillog_{m-1}W_{s}(K))=\fillog_{m-1}^{(s-t)}W_{t}(K)$
and,
if $t>s-s'$, we have $\pr_{t}(V^{s-s'}\fillog_{m}W_{s'}(K))=V^{s-s'}\fillog_{m}^{(s-t)}W_{t-s+s'}(K)$.
Hence the assertion follows by (\ref{filpwdef}).

(ii) The assertion follows by (\ref{filpwdef}) and (i).

(iii) The assertion follows similarly as the proof of (i) by (\ref{filpprwk}) and (\ref{filppwk}).

(iv) The assertion follows by (\ref{filppwk}) and (iii).

(v) Since $V^{s-s''}$ and $\pr_{s-s''}$ commute with $F-1$, the morphisms $V^{s-s''}\colon
W_{s''}(K)\rightarrow W_{s}(K)$ and $\pr_{s-s''}\colon W_{s}(K)\rightarrow W_{s-s''}(K)$ induce
$V^{s-s''}\colon (F-1)^{-1}(\fillog_{m}^{(r)}W_{s''}(K))\rightarrow (F-1)^{-1}(\fillog'^{(r)}_{m}W_{s}(K))$
and $\pr_{s-s''}\colon (F-1)^{-1}(\fillog'^{(r)}_{m}W_{s}(K))\rightarrow 
(F-1)^{-1}(\fillog'^{(r+s'')}_{m-1}W_{s-s''}(K))$ respectively.

We prove that $\fillog_{m}''^{(r)}W_{s}(K)\subset (F-1)^{-1}(\fillog_{m}'^{(r)}W_{s}(K))$.
By (\ref{filrdef}) and (\ref{filpprwk}), we have
$\fillog_{m}''^{(r)}W_{s}(K)=\fillog_{[[(m-1)/p]/p^{r}]}W_{s}(K)+V^{s-s''}\fillog_{[[m/p]/p^{r}]}W_{s''}(K)$.
By (\ref{filrdef}) and (\ref{filprdef}), we have
$\fillog_{m}'^{(r)}W_{s}(K)=\fillog_{[(m-1)/p^{r}]}W_{s}(K)+V^{s-s''}\fillog_{[m/p^{r}]}W_{s''}(K)$.
Hence, by Lemma \ref{lemgaus} (ii) and Lemma \ref{lempbbyf} (iii),
we have $\fillog_{m}''^{(r)}W_{s}(K)\subset (F-1)^{-1}(\fillog_{m}'^{(r)}W_{s}(K))$.

We consider the commutative diagram
\begin{equation}
\xymatrix{
\fillog^{(r)}_{[m/p]}W_{s''}(K)\ar[r]^-{V^{s-s''}} \ar[d] & 
\fillog_{m}''^{(r)}W_{s}(K) \ar[r]^-{\pr_{s-s''}} \ar[d]& 
\fillog_{[(m-1)/p]}^{(r+s'')}W_{s-s''}(K)\ar[r] \ar[d]& 0 \\
(F-1)^{-1}(\fillog_{m}^{(r)}W_{s''}(K)) \ar[r]^-{V^{s-s''}} &
(F-1)^{-1}(\fillog'^{(r)}_{m}W_{s}(K)) \ar[r]^-{\pr_{s-s''}} & (F-1)^{-1}(\fillog^{(r+s'')}_{m-1}W_{s-s''}(K)),  &
}
\notag
\end{equation}
where the left and right vertical arrows are the identities by Lemma \ref{lemfilrep} (ii), 
the middle vertical arrow is the inclusion, and the lower horizontal line is exact.
Since the upper horizontal line is exact by Lemma \ref{lemfilrep} (i) and (\ref{filpprwk}),
the assertion follows by applying the snake lemma.
\end{proof}

\begin{cor}
\label{cornlexk}
Let $m\ge 2$ and $0\le t\le s$ be integers.
\begin{enumerate}
\item The exact sequence (\ref{esprvnl}) induces the exact sequence
\begin{equation}
0\rightarrow \gr_{m}W_{s-t}(K)\xrightarrow{\bar{V}^{t}} \gr_{m}W_{s}(K)
\xrightarrow{\overline{\pr}_{t}} \gr^{(s-t)}_{m}W_{t}(K)\rightarrow 0. \notag
\end{equation}
\item The exact sequence (\ref{esprvpp}) induces the exact sequence
\begin{equation}
0\rightarrow \grlog_{m}''W_{s-t}(K)\xrightarrow{\bar{V}^{t}}\grlog_{m}''W_{s}(K)\xrightarrow{\overline{\pr}_{t}}\grlog_{m}''^{(s-t)}W_{t}(K)\rightarrow 0. \notag
\end{equation}
\end{enumerate}
\end{cor}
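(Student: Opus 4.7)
The plan is to imitate the proof of Lemma~\ref{lemesgrk} verbatim, with the logarithmic filtrations $\fillog_{\bullet}$ replaced by the non-logarithmic filtrations $\fil_{\bullet}$ in (i) and by $\fillog''_{\bullet}$ in (ii). The key observation is that Lemma~\ref{lemfilprep}~(ii) and (iv) supply precisely the non-logarithmic analogues of the short exact sequence (\ref{esprvl}) that drove the logarithmic argument, so the same snake-lemma diagram-chase will go through.

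For (i), I would stack the exact sequence (\ref{esprvnl}) of Lemma~\ref{lemfilprep}~(ii) at indices $m-1$ and $m$ into a commutative diagram
\[
\xymatrix{
0\ar[r]& \fil_{m-1}W_{s-t}(K)\ar[r]^-{V^{t}}\ar[d]& \fil_{m-1}W_{s}(K)\ar[r]^-{\pr_{t}}\ar[d]& \fil^{(s-t)}_{m-1}W_{t}(K)\ar[r]\ar[d]& 0\\
0\ar[r]& \fil_{m}W_{s-t}(K)\ar[r]^-{V^{t}}& \fil_{m}W_{s}(K)\ar[r]^-{\pr_{t}}& \fil^{(s-t)}_{m}W_{t}(K)\ar[r]& 0,
}
\]
in which all vertical maps are the natural inclusions. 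Applying the snake lemma produces a six-term exact sequence whose first three terms (the kernels of the vertical inclusions) vanish, and whose last three terms (the cokernels) are $\gr_{m}W_{s-t}(K)$, $\gr_{m}W_{s}(K)$, and $\gr^{(s-t)}_{m}W_{t}(K)$ respectively; the induced horizontal maps are $\bar{V}^{t}$ and $\overline{\pr}_{t}$ by construction, which yields the desired exact sequence.

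For (ii), I would run the identical diagrammatic argument with $\fillog''_{m}$ and $\fillog''^{(s-t)}_{m}$ in place of $\fil_{m}$ and $\fil^{(s-t)}_{m}$, using the sequence (\ref{esprvpp}) from Lemma~\ref{lemfilprep}~(iv) for both rows.

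I do not anticipate any genuine obstacle: the snake lemma is purely formal, and all of the exactness needed for its input has already been assembled in Lemma~\ref{lemfilprep}. The one small point requiring verification is the identification $\gr^{(s-t)}_{m}W_{t}(K)=\fil^{(s-t)}_{m}W_{t}(K)/\fil^{(s-t)}_{m-1}W_{t}(K)$ (and its analogue for $\grlog''^{(s-t)}_{m}W_{t}(K)$), which is immediate from the definitions of the filtrations.
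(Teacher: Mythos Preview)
Your proposal is correct and matches the paper's own proof exactly: the paper simply says ``The assertion follows similarly as the proof of Lemma~\ref{lemesgrk},'' which is precisely the snake-lemma argument you spell out, using the exact sequences from Lemma~\ref{lemfilprep}~(ii) and (iv) in place of (\ref{esprvl}).
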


\begin{proof}
The assertion follows similarly as the proof of Lemma \ref{lemesgrk}. 
\end{proof}

Let $m\ge 2$ be an integer.
By abuse of notation, let 
\begin{equation}
\varphi'^{(m)}_{s}\colon \gr_{m}W_{s}(K)\rightarrow \gr_{m}\Omega^{1}_{K}\otimes_{F_{K}}F_{K}^{1/p}
\notag
\end{equation}
be the composition of $\varphi'^{(m)}_{s}$ (\ref{vphipk}) and the inclusion 
$\gr_{m}\Omega^{1}_{K}\rightarrow \gr_{m}\Omega_{K}^{1}\otimes_{F_{K}}F_{K}^{1/p}$
if $(p,m)\neq (2,2)$ and 
$\tilde{\varphi}'^{(2)}_{s}$ in Proposition \ref{propnrar} (i) if $(p,m)=(2,2)$.

Let $r\ge 0$ be an integer.
By Lemma \ref{lemfilprep} (v), the morphism $F-1\colon W_{s}(K)\rightarrow W_{s}(K)$
induces the injection
\begin{equation}
\overline{F-1}\colon \grlog''^{(r)}_{m}W_{s}(K)\rightarrow \grlog'^{(r)}_{m}W_{s}(K).  \notag
\end{equation}
Especially, the morphism $F-1$ induces the injection
\begin{equation}
\overline{F-1}\colon \grlog''_{m}W_{s}(K)\rightarrow \grlog'_{m}W_{s}(K).  \notag
\end{equation}

\begin{lem}[{cf.\ \cite[Proposition 3.2.1, Proposition 3.2.3]{ma}}]
\label{lemexas}
Let $m\ge 2$ be an integer.
Then we have the exact sequence
\begin{equation}
0\rightarrow \grlog''_{m}W_{s}(K)\xrightarrow{\overline{F-1}}
\gr_{m}W_{s}(K) \xrightarrow{\varphi_{s}'^{(m)}} \gr_{m}\Omega_{K}^{1}\otimes_{F}F^{1/p}.
\notag
\end{equation}
\end{lem}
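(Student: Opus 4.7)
The plan is to parallel the proof of Lemma \ref{exseqgrwk} in the non-logarithmic setting. The injectivity of $\overline{F-1}\colon \grlog''_{m}W_{s}(K) \to \gr_{m}W_{s}(K)$ has been established immediately before the statement as a consequence of Lemma \ref{lemfilprep}(v), so it remains to prove exactness at the middle term.

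First, the morphism $\varphi_{s}^{\prime (m)}$ factors through $\gr_{m}H^{1}(K,\BZ/p^{s}\BZ)$ by an injection: for $(p,m)\neq(2,2)$ this is the content of the commutative diagram (\ref{charforminj}) together with Matsuda's injectivity of $\phi^{\prime (m)}$, while for the exceptional case $(p,m)=(2,2)$ it is furnished by Proposition \ref{propnrar}(ii), whose proof establishes injectivity of $\tilde{\phi}^{\prime (2)}$. Hence $\ker\varphi_{s}^{\prime (m)}=\ker\bar{\delta}_{s}^{\prime (m)}$, where $\bar{\delta}_{s}^{\prime (m)}\colon \gr_{m}W_{s}(K)\to \gr_{m}H^{1}(K,\BZ/p^{s}\BZ)$ is the map induced by $\delta_{s}$.

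Next I would identify this kernel. From the exact sequence (\ref{exseqdelta}) one reads off
\[
\gr_{m}H^{1}(K,\BZ/p^{s}\BZ)\simeq \fil_{m}W_{s}(K)\big/\bigl((F-1)(W_{s}(K))\cap \fil_{m}W_{s}(K)+\fil_{m-1}W_{s}(K)\bigr).
\]
By Lemma \ref{lemfilprep}(v) we have $(F-1)^{-1}(\fil_{m}W_{s}(K))=\fillog''_{m}W_{s}(K)$, and consequently $(F-1)(W_{s}(K))\cap \fil_{m}W_{s}(K)=(F-1)(\fillog''_{m}W_{s}(K))$. It follows that $\ker\bar{\delta}_{s}^{\prime (m)}$ is precisely the image of $\overline{F-1}$, yielding the desired exactness.

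The only subtlety is the exceptional case $(p,m)=(2,2)$, where Matsuda's injection $\phi^{\prime (m)}$ is unavailable; however, Proposition \ref{propnrar}(ii) supplies the replacement $\tilde{\phi}^{\prime (2)}$ after enlarging the target to $\gr_{2}\Omega^{1}_{K}\otimes_{F_{K}}F_{K}^{1/2}$, so the factorization argument proceeds uniformly in $(p,m)$. In effect, Lemma \ref{lemfilprep}(v) plays the same role for the filtration $\fil_{\bullet}$ that Lemma \ref{lempbbyf}(iii) played for $\fillog_{\bullet}$, reducing the proof to a formal combination of this preimage identification with the already-established injectivity of the characteristic-form maps.
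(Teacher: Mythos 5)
Your proposal is correct and follows essentially the same route as the paper: factor $\varphi_{s}^{\prime(m)}$ through $\gr_{m}H^{1}(K,\BZ/p^{s}\BZ)\simeq \fil_{m}W_{s}(K)/((F-1)(W_{s}(K))\cap\fil_{m}W_{s}(K)+\fil_{m-1}W_{s}(K))$, use the injectivity of $\phi^{\prime(m)}$ (Matsuda) resp.\ $\tilde{\phi}^{\prime(2)}$ (Proposition \ref{propnrar}(ii)) in the exceptional case, and combine with Lemma \ref{lemfilprep}(v) and the injectivity of $\overline{F-1}$. You merely spell out the kernel identification that the paper leaves implicit, so there is nothing to correct.
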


\begin{proof}
As in the proof of \cite[Proposition 3.2.1]{ma} and Proposition \ref{propnrar} (ii),
the morphism $\varphi_{s}'^{(m)}$ factors through
\begin{equation}
\gr_{m}H^{1}(K,\BZ/p^{s}\BZ)\simeq \fil_{m}W_{s}(K)/((F-1)(W_{s}(K))\cap \fil_{m}W_{s}(K)+
\fil_{m-1}W_{s}(K)).\notag
\end{equation}
Since this factorization defines the injection $\phi'^{(m)}$ by \cite[Proposition 3.2.3]{ma}
and Proposition \ref{propnrar} (ii), the assertion follows. 
\end{proof}

\begin{lem}
\label{lemfilp}
Let $m\ge 1$ and $r\ge 0$ be integers.
\begin{enumerate}
\item $\fillog_{m}'^{(r)}K=\fillog_{m/p^{r}}K$ if $m\in p^{r+1}\BZ$
and $\fillog_{m}'^{(r)}K=\fillog_{[(m-1)/p^{r}]}K$ if $m\notin p^{r+1}\BZ$.
\item $\fillog_{m}''^{(r)}K=\fillog_{[m/p^{r+1}]}K$.
\end{enumerate}
\end{lem}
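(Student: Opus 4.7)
The plan is to specialize the defining formulas~\eqref{filprdef} and~\eqref{filpprwk} to the case $s=1$, where $W_s(K)=K$, and then do a case analysis on whether $m\in p^{r+1}\BZ$, reducing everything to the explicit description $\fillog^{(r)}_nK=\fillog_{[n/p^r]}K$ from~\eqref{filrdef} together with the elementary identities in Lemma~\ref{lemgaus}.

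First I would record, for $s=1$, the values of the auxiliary indices appearing in the definitions: $r'=\min\{\ord_p(m),\,r+1\}$ and $s''=\max\{0,\,r'-r\}$. The key observation is that $s''=1$ precisely when $\ord_p(m)\ge r+1$, i.e.\ when $m\in p^{r+1}\BZ$, and otherwise $s''=0$ and $W_{s''}(K)=0$, so the Verschiebung term in~\eqref{filprdef} and~\eqref{filpprwk} vanishes.

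For (i), in the case $m\in p^{r+1}\BZ$ we have $s''=1$ and $s-s''=0$, so~\eqref{filprdef} reduces to $\fillog_m'^{(r)}K=\fillog^{(r)}_{m-1}K+\fillog^{(r)}_mK=\fillog^{(r)}_mK$, which by~\eqref{filrdef} equals $\fillog_{[m/p^r]}K=\fillog_{m/p^r}K$ since $p^r\mid m$. In the case $m\notin p^{r+1}\BZ$, the Verschiebung term drops out and $\fillog_m'^{(r)}K=\fillog^{(r)}_{m-1}K=\fillog_{[(m-1)/p^r]}K$.

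For (ii), the same case split applies to~\eqref{filpprwk}. When $m\in p^{r+1}\BZ$, we obtain $\fillog_m''^{(r)}K=\fillog^{(r)}_{[m/p]}K=\fillog_{[[m/p]/p^r]}K$, which equals $\fillog_{[m/p^{r+1}]}K$ by Lemma~\ref{lemgaus}(ii). When $m\notin p^{r+1}\BZ$, only the first summand survives and we get $\fillog^{(r)}_{[(m-1)/p]}K=\fillog_{[[(m-1)/p]/p^r]}K=\fillog_{[(m-1)/p^{r+1}]}K$ again by Lemma~\ref{lemgaus}(ii); finally Lemma~\ref{lemgaus}(i) applied to $p^{r+1}$ gives $[(m-1)/p^{r+1}]=[m/p^{r+1}]$ since $m\notin p^{r+1}\BZ$, completing the argument.

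There is essentially no obstacle here: the whole statement is a bookkeeping exercise once one notices that for $s=1$ the index $s''$ is either $0$ or $1$, so at most one case of~\eqref{filprdef}/\eqref{filpprwk} contributes nontrivially. The only mild subtlety is remembering to invoke Lemma~\ref{lemgaus}(i) in the $m\notin p^{r+1}\BZ$ branch of (ii) to identify $[(m-1)/p^{r+1}]$ with $[m/p^{r+1}]$ so that both branches can be stated uniformly.
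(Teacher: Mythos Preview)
Your argument is correct. Part~(i) is exactly the paper's proof: specialize~\eqref{filprdef} to $s=1$, read off $s''\in\{0,1\}$ from whether $m\in p^{r+1}\BZ$, and apply~\eqref{filrdef}.

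For part~(ii) you take a slightly different route from the paper. The paper does not unwind~\eqref{filpprwk} directly; instead it invokes Lemma~\ref{lemfilprep}(v) to write $\fillog_m''^{(r)}K=(F-1)^{-1}(\fillog_m'^{(r)}K)$, then feeds in the description of $\fillog_m'^{(r)}K$ from part~(i) and uses Lemma~\ref{lempbbyf}(iii) to compute the $(F-1)$-preimage, finishing with Lemma~\ref{lemgaus}. Your approach, by contrast, stays at the level of the defining formula~\eqref{filpprwk}, observes that the Verschiebung term again survives only when $s''=1$, and reduces both cases to~\eqref{filrdef} and Lemma~\ref{lemgaus}. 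Both arguments are equally elementary; yours is marginally more self-contained (it does not appeal to Lemma~\ref{lemfilprep}(v) or Lemma~\ref{lempbbyf}), while the paper's is more conceptual in that it exhibits $\fillog''$ as the $(F-1)$-pullback of $\fillog'$, which is the viewpoint used elsewhere in the paper.
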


\begin{proof}
(i) By (\ref{filprdef}), we have $\fillog_{m}'^{(r)}K=\fillog_{m}^{(r)}K$ if $m\in p^{r+1}\BZ$
and $\fillog_{m}'^{(r)}K=\fillog_{m-1}^{(r)}K$ if $m\notin p^{r+1}\BZ$.
Hence the assertion follows by (\ref{filrdef}). 

(ii) By Lemma \ref{lemfilprep} (v), we have $\fillog_{m}''^{(r)}K=(F-1)^{-1}(\fillog_{m}'^{(r)}K)$.
By (i) and Lemma \ref{lempbbyf} (iii), we have 
$\fillog_{m}''^{(r)}K=\fillog_{m/p^{r+1}}W_{s}(K)$ if $m\in p^{r+1}\BZ$ and 
$\fillog_{m}''^{(r)}K=\fillog_{[[(m-1)/p^{r}]/p]}W_{s}(K)$ if $m\notin p^{r+1}\BZ$.
Hence the assertion follows by Lemma \ref{lemgaus}.
\end{proof}

\begin{cor}
\label{corgrpk}
Let $m\ge 2$ and $r\ge 0$ be integers.
\begin{enumerate}
\item Assume that $r\ge 1$.
Then $\gr^{(r)}_{m}K=\grlog_{[m/p^{r}]}K$ if $m\in p^{r+1}\BZ$ or 
$\ord_{p}(m-1)=r$,
and $\gr^{(r)}_{m}K=0$ if otherwise.
\item $\grlog_{m}''^{(r)}K=\grlog_{m/p^{r+1}}K$ if $m\in p^{r+1}\BZ$, and
$\grlog_{m}''^{(r)}K=0$ if $m\notin p^{r+1}\BZ$.
\end{enumerate}
\end{cor}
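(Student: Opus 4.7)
The plan is to deduce both parts directly from Lemma \ref{lemfilp}, which expresses the filtrations $\fillog'^{(r)}_{m}K$ and $\fillog''^{(r)}_{m}K$ in terms of logarithmic filtration steps of shifted index, and then to detect when passing from $m-1$ to $m$ actually changes the index by invoking Lemma \ref{lemgaus}~(i).

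For (ii), I would apply Lemma \ref{lemfilp}~(ii) twice to obtain
\[
\fillog''^{(r)}_{m}K / \fillog''^{(r)}_{m-1}K
= \fillog_{[m/p^{r+1}]}K / \fillog_{[(m-1)/p^{r+1}]}K.
\]
Lemma \ref{lemgaus}~(i) tells us that $[m/p^{r+1}]$ and $[(m-1)/p^{r+1}]$ differ exactly when $m\in p^{r+1}\BZ$, in which case the difference is $1$ and $[m/p^{r+1}]=m/p^{r+1}$; otherwise the two indices agree and the graded quotient vanishes. This gives (ii) immediately.

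For (i), I would split along whether $m\in p^{r+1}\BZ$. If $m\in p^{r+1}\BZ$, then Lemma \ref{lemfilp}~(i) gives $\fillog'^{(r)}_{m}K=\fillog_{[m/p^{r}]}K$; since $\ord_{p}(m-1)=0<r+1$ (using $r\ge 1$), the same lemma applied to $m-1$ together with Lemma \ref{lemgaus}~(i) (which yields $[(m-2)/p^{r}]=[m/p^{r}]-1$) identifies $\fillog'^{(r)}_{m-1}K=\fillog_{[m/p^{r}]-1}K$, so $\gr^{(r)}_{m}K=\grlog_{[m/p^{r}]}K$. If $m\notin p^{r+1}\BZ$, both filtration steps are of the form $\fillog_{[(m-1)/p^{r}]}K$ and $\fillog_{[(m-2)/p^{r}]}K$ (or coincide via $\fillog_{(m-1)/p^{r}}K$ when $\ord_{p}(m-1)\ge r+1$); Lemma \ref{lemgaus}~(i) shows these indices differ precisely when $m-1\in p^{r}\BZ$ but $m-1\notin p^{r+1}\BZ$, i.e.\ exactly when $\ord_{p}(m-1)=r$. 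In that case $\ord_{p}(m)=0<r$, so $m\notin p^{r}\BZ$, and Lemma \ref{lemgaus}~(i) once more gives $[m/p^{r}]=[(m-1)/p^{r}]$, so the nonzero value is $\grlog_{[m/p^{r}]}K$.

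No individual step is difficult; the only obstacle is bookkeeping the casework on $\ord_{p}(m)$ and $\ord_{p}(m-1)$ in a uniform way, which is handled cleanly by Lemma \ref{lemgaus}~(i) applied to each of the indices $[m/p^{r}],[(m-1)/p^{r}],[(m-2)/p^{r}]$ (respectively $[m/p^{r+1}],[(m-1)/p^{r+1}]$ in part~(ii)).
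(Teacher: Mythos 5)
Your proposal is correct and follows essentially the same route as the paper: both parts are reduced to Lemma \ref{lemfilp}, and the index bookkeeping (when the step from $m-1$ to $m$ actually changes the logarithmic filtration index, and the identification $[(m-1)/p^{r}]=[m/p^{r}]$ when $\ord_{p}(m-1)=r$) is handled by Lemma \ref{lemgaus}~(i), with the same case split on $m\in p^{r+1}\BZ$ and $m-1\in p^{r+1}\BZ$. No gaps; this matches the paper's argument.
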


\begin{proof}
(i) Assume that $m\in p^{r+1}\BZ$.
Since $r\ge 1$, we have $m-1\notin p^{r}\BZ$.
Hence $\grlog_{m}'^{(r)}K=\fillog_{[m/p^{r}]}K/\fillog_{[(m-2)/p^{r}]}K$
by Lemma \ref{lemfilp} (i).
By Lemma \ref{lemgaus} (i), the assertion follows in this case.

Assume that $m\notin p^{r+1}\BZ$.
By Lemma \ref{lemfilp} (i), we have
$\grlog_{m}'^{(r)}K=
\fillog_{[(m-1)/p^{r}]}K/\fillog_{[(m-2)/p^{r}]}K$ if $m-1\notin p^{r+1}\BZ$ and
$\grlog_{m}'^{(r)}K=0$ if $m-1\in p^{r+1}\BZ$.
Suppose that $m-1\notin p^{r+1}\BZ$.
By Lemma \ref{lemgaus} (i), we have $\grlog_{m}'^{(r)}K=\grlog_{[(m-1)/p^{r}]}K$ if $m-1\in p^{r}\BZ$ 
and $\grlog_{m}'^{(r)}K=0$ if $m-1\notin p^{r}\BZ$.
If $m-1\in p^{r}\BZ$, then we have $m\notin p^{r}\BZ$, since $r\ge 1$.
Hence the assertion follows by Lemma \ref{lemgaus} (i).

(ii) By Lemma \ref{lemfilp} (ii), we have 
$\grlog_{m}''^{(r)}K=\fillog_{[m/p^{r+1}]}''^{(r)}K/\fillog_{[(m-1)/p^{r+1}]}''^{(r)}K$.
Hence the assertion follows by Lemma \ref{lemgaus} (i).
\end{proof}

We note that if $r=0$ and if $m\in p\BZ$ 
then $\grlog_{m}'^{(r)}K=\gr_{m}K=\fillog_{m}K/\fillog_{m-2}K$.

\subsection{Sheafification: logarithmic case}
\label{sssheaf}

Let $X$ be a smooth separated scheme over a perfect field $k$ of 
characteristic $p>0$.
Let $D$ be a divisor on $X$ with simple normal crossings and $\{D_{i}\}_{i\in I}$ the irreducible components of $D$.
The generic point of $D_{i}$ is denoted by $\mathfrak{p}_{i}$ for $i\in I$.
We put $U=X-D$ and 
let $j\colon U\rightarrow X$ be the canonical open immersion.
For $i\in I$, let $\dvr_{K_{i}}$ denote the completion $\cdvr_{X,\mathfrak{p}_{i}}$ of the local ring $\dvr_{X,\mathfrak{p}_{i}}$ at
$\mathfrak{p}_{i}$ and $K_{i}$ the fractional field of $\dvr_{K_{i}}$
called {\it local field} at $\mathfrak{p}_{i}$.

Let $\epsilon\colon X_{\et}\rightarrow X_{\mathrm{Zar}}$
be the canonical mapping from the \'{e}tale site of $X$ to
the Zariski site of $X$. 
We use the same notation $j_{*}$ for the push-forward of both \'{e}tale sheaves and Zariski sheaves.
We consider the exact sequence
\begin{equation}
0\rightarrow W_{s}(\mathbf{F}_{p})\rightarrow W_{s}(\dvr_{U_{\et}})\xrightarrow{F-1} W_{s}(\dvr_{U_{\et}})\rightarrow 0 \notag
\end{equation}
of \'{e}tale sheaves on $U$ for $s\in \BZ_{\ge 0}$.
Since $R^{1}(\epsilon\circ j)_{*}W_{s}(\dvr_{U_{\et}})=0$, we have an exact sequence
\begin{equation}
\label{eqszshf}
0\rightarrow j_{*}W_{s}(\mathbf{F}_{p})\rightarrow j_{*}W_{s}(\dvr_{U})
\xrightarrow{F-1} j_{*}W_{s}(\dvr_{U}) \rightarrow 
R^{1}(\epsilon\circ j)_{*}\mathbf{Z}/p^{s}\mathbf{Z}
\rightarrow 0 
\end{equation}
We write 
\begin{equation}
\label{deltsshf}
\delta_{s}\colon j_{*}W_{s}(\dvr_{U})\rightarrow 
R^{1}(\epsilon\circ j)_{*}\BZ/p^{s}\BZ 
\end{equation}
for the forth morphism in (\ref{eqszshf}).

Let $V$ be an open subset of $X$.
Since we have the spectral sequence $E_{2}^{p,q}=H^{p}_{\mathrm{Zar}}(V,
R^{q}(\epsilon \circ j)_{*}\BZ/p^{s}\BZ)\Rightarrow H^{p+q}_{\et}(U\cap V,\BZ/p^{s}\BZ)$ and $E_{2}^{1,0}=E_{2}^{2,0}=0$, 
the canonical morphism
\begin{equation}
H^{1}_{\et}(U\cap V,\mathbf{Z}/p^{s}\BZ)\rightarrow \Gamma(V,R^{1}(\epsilon\circ j)_{*}\BZ/p^{s}\BZ) \notag
\end{equation}
is an isomorphism.
By the exact sequence (\ref{eqszshf}), the morphism $\delta_{s}$ (\ref{deltsshf}) 
induces an isomorphism
\begin{equation}
j_{*}W_{s}(\dvr_{U})/(F-1)j_{*}W_{s}(\dvr_{U})\rightarrow
R^{1}(\epsilon\circ j)_{*}\BZ/p^{s}\BZ. \notag
\end{equation}

If $D_{i}\cap V\neq \emptyset$ and if $a\in \Gamma(U\cap V,W_{s}(\dvr_{U}))$,
let $a|_{K_{i}}$ denote the image of $a$ by
\begin{equation}
\Gamma(U\cap V,W_{s}(\dvr_{U}))\rightarrow W_{s}(K_{i}).\notag
\end{equation}
Similarly, if $D_{i}\cap V\neq \emptyset$ and if $\chi\in H^{1}_{\et}(U\cap V, \mathbf{Z}/p^{s}\BZ)$,
let $\chi|_{K_{i}}$ denote the image of $\chi$ by
\begin{equation}
H^{1}_{\et}(U\cap V,\BZ/p^{s}\BZ)\rightarrow H^{1}(K_{i},\BZ/p^{s}\BZ).
\notag
\end{equation}

\begin{defn}
Let $R=\sum_{i\in I}n_{i}D_{i}$, where $n_{i}\in \mathbf{Z}_{\ge 0}$ for $i\in I$,
and let $j_{i}\colon \Spec K_{i}\rightarrow X$ denote the canonical morphism for $i\in I$. 
\begin{enumerate}
\item We define a subsheaf $\fillog_{R} j_{\ast}W_{s}(\dvr_{U})$
of Zariski sheaf $j_{\ast}W_{s}(\dvr_{U})$ to be
the pull-back of $\bigoplus_{i\in I}j_{i*}\fillog_{n_{i}}W_{s}(K_{i})$ by the morphism $j_{*}W_{s}(\dvr_{U})\rightarrow \bigoplus_{i\in I}
j_{i*}W_{s}(K_{i})$.
\item We define a subsheaf $\fillog_{R}R^{1}(\epsilon\circ j)_{\ast}\BZ/p^{s}\BZ$
of $R^{1}(\epsilon\circ j)_{\ast}\BZ/p^{s}\BZ$ to be the image of $\fillog_{R}j_{\ast}W_{s}(\dvr_{U})$ by $\delta_{s}$ (\ref{deltsshf}).
\item We define a subsheaf $\fillog_{R}j_{\ast}\Omega^{1}_{U}$
of $j_{*}\Omega_{U}^{1}$ to be $\Omega^{1}_{X}(\log D)(R)$.
\end{enumerate}
\end{defn}

We consider the morphism
\begin{equation}
\label{fsds}
-F^{s-1}d\colon j_{\ast}W_{s}(\dvr_{U})\rightarrow j_{\ast}\Omega^{1}_{U} 
\; ; \; (a_{s-1},\ldots, a_{0})\mapsto -\sum_{i=0}^{s-1}a_{i}^{p^{i}-1}da_{i}. 
\end{equation}
Let $R=\sum_{i\in I}n_{i}D_{i}$,
where $n_{i}\in \mathbf{Z}_{\ge 0}$ for $i\in I$.
Then (\ref{fsds}) induces the morphism 
\begin{equation}
\fillog_{R}j_{\ast}W_{s}(\dvr_{U})
\rightarrow \fillog_{R}j_{\ast}\Omega^{1}_{U}. \notag
\end{equation} 
Let $R^{\prime}=\sum_{i\in I}n_{i}^{\prime}D_{i}$, where $n_{i}^{\prime}\in \mathbf{Z}_{\ge 0}$ such that $n_{i}'\le n_{i}$ for $i\in I$.
Then we have $\fillog_{R}\supset \fillog_{R^{\prime}}$ and
put $\grlog_{R/R^{\prime}}=\fillog_{R}/\fillog_{R^{\prime}}$.
Then the morphism (\ref{fsds}) induces the morphism
\begin{equation}
\label{phishf}
\varphi^{(R/R^{\prime})}_{s}\colon
\grlog_{R/R^{\prime}}j_{\ast}W_{s}(\dvr_{U})\rightarrow \grlog_{R/R^{\prime}}j_{\ast}\Omega^{1}_{U}.
\end{equation}
If $R=R'+D_{i}$ for some $i\in I$,
then we simply write $\varphi_{s}^{(R,i)}$ for $\varphi_{s}^{(R,R^{\prime})}$ and $\grlog_{R,i}$ for $\grlog_{R/R^{\prime}}$.

Let $0\le t\le s$ be integers.
We put $[R/p^{j}]=\sum_{i\in I}[n_{i}/p^{j}]D_{i}$.
We consider the projection 
\begin{equation}
\label{prts}
\pr_{t}\colon j_{\ast}W_{s}(\dvr_{U})\rightarrow j_{\ast}W_{t}(\dvr_{U})
\; ; \; (a_{s-1},\ldots,a_{0})\mapsto (a_{s-1},\ldots,a_{s-t}). 
\end{equation}
Since we have $\pr_{t}(\fillog_{R}j_{*}W_{s}(\dvr_{U}))
=\fillog_{[R/p^{s-t}]}j_{*}W_{t}(\dvr_{U})$ by (\ref{filrdef}) and Lemma \ref{lemfilrep} (i), 
we have the exact sequence
\begin{equation}
\label{esprvlshf}
0\rightarrow \fillog_{R}j_{*}W_{s-t}(\dvr_{U})\xrightarrow{V^{t}} 
\fillog_{R}j_{*}W_{s}(\dvr_{U}) \xrightarrow{\pr_{t}} 
\fillog_{[R/p^{s-t}]}j_{*}W_{t}(\dvr_{U}) \rightarrow 0.
\end{equation}

\begin{lem}
\label{lemesgrwk}
Let $R=\sum_{i\in I}n_{i}D_{i}$ and
$R^{\prime}=\sum_{i\in I}n_{i}'D_{i}$, where $n_{i},n_{i}'\in \mathbf{Z}_{\ge 0}$
and $n_{i}'\le n_{i}$ for every $i\in I$.
Then the exact sequence (\ref{esprvlshf}) induces the exact sequence 
\begin{equation}
\label{esgrlogshf}
0\rightarrow \grlog_{R/R^{\prime}}j_{\ast}W_{s-t}(\dvr_{U})\xrightarrow{\bar{V}^{t}} \grlog_{R/R^{\prime}}j_{\ast}W_{s}(\dvr_{U})
\xrightarrow{\overline{\pr}_{t}} \grlog_{[R/p^{s-t}]/[R'/p^{s-t}]}j_{\ast}W_{t}(\dvr_{U})\rightarrow 0.
\end{equation}
Especially, if $R=R'+D_{i}$ for some $i\in I$, we have the exact sequence
\begin{equation}
0\rightarrow \grlog_{R,i}j_{\ast}W_{s-t}(\dvr_{U})\xrightarrow{\bar{V}^{t}} \grlog_{R,i}j_{\ast}W_{s}(\dvr_{U})
\xrightarrow{\overline{\pr}_{t}} \grlog_{[R/p^{s-t}]/[(R-D_{i})/p^{s-t}]}j_{\ast}W_{t}(\dvr_{U})\rightarrow 0. \notag
\end{equation}
\end{lem}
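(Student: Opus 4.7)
The plan is to mimic the proof of Lemma \ref{lemesgrk} at the sheaf level by applying the snake lemma to a pair of exact sequences of the form (\ref{esprvlshf}). Since the filtration $\fillog_{R}j_{*}W_{s}(\dvr_{U})$ is defined as the pullback of the stalkwise filtrations $\bigoplus_{i\in I}j_{i*}\fillog_{n_{i}}W_{s}(K_{i})$, the sequence (\ref{esprvlshf}) is available both for $R$ and for $R'$, and the projection $\pr_t$ sends $\fillog_{R}$ onto $\fillog_{[R/p^{s-t}]}$ (and analogously for $R'$).

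First I would form the commutative diagram
\begin{equation}
\xymatrix{
0 \ar[r] & \fillog_{R'}j_{*}W_{s-t}(\dvr_{U}) \ar[r]^-{V^{t}} \ar[d] & \fillog_{R'}j_{*}W_{s}(\dvr_{U}) \ar[r]^-{\pr_{t}} \ar[d] & \fillog_{[R'/p^{s-t}]}j_{*}W_{t}(\dvr_{U}) \ar[r] \ar[d] & 0 \\
0 \ar[r] & \fillog_{R}j_{*}W_{s-t}(\dvr_{U}) \ar[r]^-{V^{t}} & \fillog_{R}j_{*}W_{s}(\dvr_{U}) \ar[r]^-{\pr_{t}} & \fillog_{[R/p^{s-t}]}j_{*}W_{t}(\dvr_{U}) \ar[r] & 0,
}\notag
\end{equation}
whose two rows are the exact sequences (\ref{esprvlshf}) attached to $R'$ and $R$ respectively, and whose vertical arrows are the canonical inclusions coming from the hypothesis $n_{i}'\le n_{i}$ (which in particular forces $[n_{i}'/p^{s-t}]\le [n_{i}/p^{s-t}]$, so that the rightmost vertical arrow is well-defined).

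Second, I would apply the snake lemma to this diagram. Since all three vertical arrows are injections, the kernel sequence vanishes and the snake lemma collapses to exactness of the cokernel sequence. By definition of $\grlog_{R/R'}$ and $\grlog_{[R/p^{s-t}]/[R'/p^{s-t}]}$, this cokernel sequence is precisely (\ref{esgrlogshf}). The specialization to $R=R'+D_{i}$ is then immediate from the notational convention $\grlog_{R,i}=\grlog_{R/(R-D_{i})}$.

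There is no substantive obstacle: the entire argument is a direct transcription of the snake-lemma step in the proof of Lemma \ref{lemesgrk} to the sheaf setting. The only point that requires any verification is that the rows of the diagram really are short exact sequences of Zariski sheaves, and this reduces by checking stalks at the generic points $\mathfrak{p}_{i}$ to Lemma \ref{lemfilrep} (i) together with the Witt-vector short exact sequence $0\to W_{s-t}\xrightarrow{V^{t}}W_{s}\xrightarrow{\pr_{t}}W_{t}\to 0$.
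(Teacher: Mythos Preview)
Your approach is correct and essentially identical to the paper's: form the two-row diagram of exact sequences (\ref{esprvlshf}) for $R'$ and $R$ with inclusion verticals, then apply the snake lemma to obtain (\ref{esgrlogshf}). One small imprecision: checking stalks only at the generic points $\mathfrak{p}_{i}$ does not suffice for exactness of a sequence of Zariski sheaves, but this is harmless here since the exactness of the rows is already recorded as (\ref{esprvlshf}) in the text preceding the lemma, so no further verification is needed.
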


\begin{proof}
The assertion follows similarly as the proof of Lemma \ref{lemesgrk}.
In fact, we consider the commutative diagram
\begin{equation}
\label{cdgflogshf}
\xymatrix{
0 \ar[r] & \fillog_{R'}j_{*}W_{s-t}(\dvr_{U}) \ar[r]^-{V^{t}} \ar[d] & \fillog_{R'}j_{*}W_{s}(\dvr_{U}) \ar[r] ^-{\pr_{t}}
\ar[d] & \fillog_{[R'/p^{s-t}]}j_{*}W_{t}(\dvr_{U}) \ar[r] \ar[d] & 0 \\
0 \ar[r] & \fillog_{R}j_{*}W_{s-t}(\dvr_{U}) \ar[r]^-{V^{t}}  & \fillog_{R}j_{*}W_{s}(\dvr_{U}) \ar[r] ^-{\pr_{t}}
& \fillog_{[R/p^{s-t}]}j_{*}W_{t}(\dvr_{U}) \ar[r] & 0,
} 
\end{equation}
where the horizontal lines are exact and the vertical arrows are inclusions.
Then this diagram induces the sequence (\ref{esgrlogshf}). 
By taking stalks of (\ref{cdgflogshf}), the exactness of (\ref{esgrlogshf}) follows.
\end{proof}

Let $R=\sum_{i\in I}n_{i}D_{i}$ and $R'=\sum_{i\in I}n_{i}'D_{i}$,
where $n_{i},n_{i}'\in \mathbf{Z}_{\ge 0}$ and $n_{i}'\le n_{i}$ for every $i\in I$.
We consider the morphism
\begin{equation}
\label{barfshf}
\bar{F}\colon \grlog_{[R/p]/[R^{\prime}/p]}j_{\ast}W_{s}(\dvr_{U})\rightarrow \grlog_{R/R^{\prime}}j_{\ast}W_{s}(\dvr_{U})
\end{equation}
induced by the Frobenius $F\colon j_{*}W_{s}(\dvr_{U})\rightarrow j_{*}W_{s}(\dvr_{U})$.
Since $F^{-1}(\fillog_{R}j_{*}W_{s}(\dvr_{U}))=\fillog_{[R/p]}j_{*}W_{s}(\dvr_{U})$
by Lemma \ref{lempbbyf} (iii) and similarly for $R'$, 
the morphism (\ref{barfshf}) is injective.

We consider the morphism
\begin{equation}
\label{barfmoshf}
\overline{F-1}\colon \grlog_{[R/p]/[R^{\prime}/p]}j_{\ast}W_{s}(\dvr_{U})\rightarrow \grlog_{R/R^{\prime}}j_{\ast}W_{s}(\dvr_{U})
\end{equation}
induced by $F-1\colon j_{*}W_{s}(\dvr_{U})\rightarrow j_{*}W_{s}(\dvr_{U})$.
If $R=R'+D_{i}$ for some $i\in I$, then
the morphisms (\ref{barfshf}) and (\ref{barfmoshf}) are the same, since $[R/p]\le R'$ by product order.

\begin{lem}
\label{lemaina}
Let $A$ be a smooth ring over $k$.
Let $t_{1},\ldots,t_{r}$ be elements of $A$
such that $(t_{1}\cdots t_{r}=0)$ is a divisor on $\Spec A$ with simple normal crossings whose irreducible components are $\{(t_{i}=0)\}_{i=1}^{r}$.
Let $a$ be an element of $\Frac A$. 
Assume that $a^{p}t_{1}^{n_{1}}\cdots t_{r}^{n_{r}}\in A$,
where $n_{1},\ldots,n_{r}$ are integers such that $0\le n_{i}<p$ for $i=1,\ldots ,r$.
Then we have $a\in A$.
\end{lem}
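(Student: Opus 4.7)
The plan is to reduce the claim to a valuation-theoretic check at height one primes of $A$, using that a smooth $k$-algebra is regular and hence normal. First I would observe that since $A$ is a smooth $k$-algebra (and in particular a Noetherian normal domain), we have $A=\bigcap_{\mathfrak{p}}A_{\mathfrak{p}}$, where $\mathfrak{p}$ runs over height one primes of $A$. Therefore it suffices to prove that $v_{\mathfrak{p}}(a)\ge 0$ for every such $\mathfrak{p}$, where $v_{\mathfrak{p}}$ denotes the normalized valuation associated with the discrete valuation ring $A_{\mathfrak{p}}$.

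The simple normal crossings hypothesis ensures that each $(t_{i}=0)$ is a height one prime of $A$ and that $t_{i}$ is a uniformizer of $A_{(t_{i})}$, while the $t_{j}$ for $j\neq i$ are units in $A_{(t_{i})}$. I would then split the verification into two cases according to whether $\mathfrak{p}\in\{(t_{1}),\ldots,(t_{r})\}$ or not.

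In the first case, where $\mathfrak{p}\neq (t_{i})$ for every $i$, each $t_{i}$ lies outside $\mathfrak{p}$ and is therefore a unit in $A_{\mathfrak{p}}$; the hypothesis $a^{p}t_{1}^{n_{1}}\cdots t_{r}^{n_{r}}\in A$ then forces $a^{p}\in A_{\mathfrak{p}}$, whence $p\,v_{\mathfrak{p}}(a)=v_{\mathfrak{p}}(a^{p})\ge 0$ and so $v_{\mathfrak{p}}(a)\ge 0$. In the second case, with $\mathfrak{p}=(t_{i})$, the $t_{j}$ with $j\neq i$ remain units in $A_{\mathfrak{p}}$, so the relation $a^{p}t_{1}^{n_{1}}\cdots t_{r}^{n_{r}}\in A$ gives $p\,v_{\mathfrak{p}}(a)+n_{i}\ge 0$, that is, $v_{\mathfrak{p}}(a)\ge -n_{i}/p$. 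Since $0\le n_{i}<p$ and $v_{\mathfrak{p}}(a)\in\mathbf{Z}$, this integrality forces $v_{\mathfrak{p}}(a)\ge 0$.

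The argument is essentially routine once the normal-domain reduction is in place; the only real point worth checking is that each $(t_{i})$ is indeed a height one prime with uniformizer $t_{i}$, and this is exactly the content of the simple normal crossings assumption combined with regularity. I do not anticipate a serious obstacle here, and the sharpness $n_{i}<p$ is used precisely to convert $v_{\mathfrak{p}}(a)\ge -n_{i}/p>-1$ into the integer inequality $v_{\mathfrak{p}}(a)\ge 0$.
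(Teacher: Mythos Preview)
Your proof is correct and follows essentially the same valuation-theoretic approach as the paper: check that $v_{\mathfrak{p}}(a)\ge 0$ at the primes $(t_i)$ using the inequality $p\,v_{(t_i)}(a)+n_i\ge 0$ together with $0\le n_i<p$, and handle the remaining primes by observing that the $t_j$ become units there. The only cosmetic difference is that the paper packages the second case as membership in $A[1/t_1\cdots t_r]$ and then invokes factoriality via $A[1/t_1\cdots t_r]\cap\bigcap_{i}A_{(t_i)}=A$, whereas you invoke normality via $A=\bigcap_{\mathrm{ht}\,\mathfrak{p}=1}A_{\mathfrak{p}}$; both are equivalent here.
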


\begin{proof}
Since $a^{p}t_{1}^{n_{1}}\cdots t_{r}^{n_{r}}\in A$,
the valuation of $a^{p}t_{1}^{n_{1}}\cdots t_{r}^{n_{r}}$ in
$A_{(t_{i})}$ is non-negative for $i=1,\ldots,r$.
Since the normalized valuation of $a^{p}$ in $\Frac A_{(t_{i})}$ for $i=1,\ldots,r$ is divided by $p$ and $0\le n_{i}<p$ for $i=1,\ldots ,r$,
the valuation of $a$ in $\Frac A_{(t_{i})}$ for $i=1,\ldots,r$
is non-negative.
Since $A$ is factorial, we have $A[1/t_{1}\cdots t_{r}]\cap \bigcap_{i=1}^{r}A_{(t_{i})}=A$.
Hence the assertion follows.
\end{proof}

\begin{lem}
\label{lemcap}
Let $\mf$, $\mg$, and $\mh$ be sheaves on $X$
and let $\mf_{i}$, $\mg_{i}$, and $\mh_{i}$ be subsheaves of $\mf$, $\mg$, and $\mh$ respectively
for $i=1,2,3$.
Assume that $\mf_{3}=\mf_{1}\cap \mf_{2}$, $\mh_{3}=\mh_{1}\cap \mh_{2}$,
and that $\mg_{3}\subset \mg_{1}\cap \mg_{2}$.
If we have an exact sequence $0\rightarrow \mf \rightarrow \mg \rightarrow \mh \rightarrow 0$
and if this exact sequence induces the exact sequence $0\rightarrow \mf_{i}\rightarrow \mg_{i}
\rightarrow \mh_{i}\rightarrow 0$ for $i=1,2,3$, 
then we have $\mg_{3}=\mg_{1}\cap \mg_{2}$.
\end{lem}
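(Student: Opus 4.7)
The plan is to establish the reverse inclusion $\mg_1 \cap \mg_2 \subset \mg_3$, the other direction being part of the hypothesis. The argument is a short diagram chase, and since equality of subsheaves may be checked stalkwise, I would fix a point $x \in X$ and work with the induced short exact sequences of stalks.

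First I would record the key consequence of the hypotheses: for each $i \in \{1,2,3\}$, since $0 \to \mf_i \to \mg_i \to \mh_i \to 0$ is exact and $\mg_i \hookrightarrow \mg$, $\mh_i \hookrightarrow \mh$, the kernel of $\mg_i \to \mh_i$ agrees with the kernel of the composite $\mg_i \to \mg \to \mh$. In particular, $\mf_i = \mf \cap \mg_i$ as subsheaves of $\mg$.

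Next, given $g \in (\mg_1)_x \cap (\mg_2)_x$, let $h \in \mh_x$ be its image. By the exactness of $0 \to \mf_{i,x} \to \mg_{i,x} \to \mh_{i,x} \to 0$, the element $h$ lies in $\mh_{i,x}$ for $i = 1, 2$, hence $h \in \mh_{1,x} \cap \mh_{2,x} = \mh_{3,x}$. Using surjectivity of $\mg_{3,x} \to \mh_{3,x}$, I lift $h$ to some $g' \in \mg_{3,x}$. Then $g - g' \in \mf_x$; moreover $g' \in \mg_{3,x} \subset \mg_{1,x} \cap \mg_{2,x}$, so $g - g'$ lies in $\mf_x \cap \mg_{i,x} = \mf_{i,x}$ for $i = 1, 2$, and therefore $g - g' \in \mf_{1,x} \cap \mf_{2,x} = \mf_{3,x} \subset \mg_{3,x}$. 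Consequently $g = g' + (g - g') \in \mg_{3,x}$, which completes the argument.

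There is no real obstacle here; the only subtlety worth spelling out is the identification $\mf_i = \mf \cap \mg_i$, which is what makes the snake-style chase go through, together with the surjectivity of $\mg_3 \to \mh_3$ that allows one to correct an element of $\mg_1 \cap \mg_2$ by a representative in $\mg_3$ so as to land in the common kernel $\mf$.
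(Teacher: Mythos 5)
Your proof is correct and is in substance the same argument as the paper's: the paper packages the chase by forming the diagram with middle row $\mf_{1}\oplus\mf_{2}\rightarrow\mg_{1}\oplus\mg_{2}\rightarrow\mh_{1}\oplus\mh_{2}$, bottom vertical arrows given by differences, and applying the snake lemma, whose kernel terms are exactly $\mf_{3}$, $\mg_{1}\cap\mg_{2}$, $\mh_{3}$. Your stalkwise chase (lift the image in $\mh_{3}$ through $\mg_{3}\twoheadrightarrow\mh_{3}$ and correct by an element of $\mf\cap\mg_{1}\cap\mg_{2}=\mf_{3}$, using $\mf_{i}=\mf\cap\mg_{i}$) is just that snake-lemma argument carried out by hand, so there is nothing further to add.
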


\begin{proof}
We consider the commutative diagram
\begin{equation}
\label{cdfgh}
\xymatrix{
& 0 \ar[d] & 0 \ar[d]  & 0 \ar[d] & \\
0 \ar[r] & \mf_{3} \ar[r] \ar[d] & \mg_{3} \ar[r] \ar[d] & \mh_{3} \ar[r] \ar[d] & 0 \\
0 \ar[r] & \mf_{1}\oplus \mf_{2} \ar[r] \ar[d] & \mg_{1}\oplus \mg_{2} \ar[r] \ar[d] & 
\mh_{1}\oplus \mh_{2} \ar[r] \ar[d] & 0 \\
0 \ar[r] & \mf \ar[r] & \mg \ar[r] & \mh \ar[r] & 0,
} 
\end{equation}
where the bottom vertical arrows are defined by the difference.
Since $\mf_{3}=\mf_{1}\cap \mf_{2}$ and $\mh_{3}=\mh_{1}\cap \mh_{2}$,
the left and right vertical columns are exact.
By applying the snake lemma to the lower two lines, 
we have $\mg_{3}=\mg_{1}\cap \mg_{2}$.
\end{proof}

\begin{prop}
\label{lemsone}
Let $R=\sum_{i\in I}n_{i}D_{i}$, where $n_{i}\in \mathbf{Z}_{\ge 0}$ for $i\in I$.
Let $s\ge 0$ be an integer and let $i$ be an element of $I$ such that $n_{i}\ge 1$.
We put $R'=R-D_{i}$.
Then we have the exact sequence
\begin{equation}
0\rightarrow \fillog_{[R/p]}j_{\ast}W_{s}(\dvr_{U})/\fillog_{[R'/p]}j_{*}W_{s}(\dvr_{U})
\xrightarrow{\bar{F}} \grlog_{R,i}j_{\ast}W_{s}(\dvr_{U})
\xrightarrow{\varphi_{s}^{(R,i)}} \grlog_{R,i}j_{\ast}\Omega^{1}_{U},
\notag
\end{equation}
where $\fillog_{[R/p]}j_{\ast}W_{s}(\dvr_{U})/\fillog_{[R'/p]}j_{*}W_{s}(\dvr_{U})$ is 
$\grlog_{[R/p],i}j_{*}W_{s}(\dvr_{U})$ if $n_{i}\in p\BZ$ and $0$ if $n_{i} \notin p\BZ$.
\end{prop}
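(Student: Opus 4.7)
The plan is to reduce the sheaf-level exactness to the local version (Lemma \ref{exseqgrwk}) and to extend it across non-generic points of $D_i$ by local chart computations and a snake-lemma induction on $s$.

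I begin with the easier parts. Injectivity of $\bar F$ in (\ref{barfshf}) reduces to the sheaf equality $F^{-1}(\fillog_{R}j_{\ast}W_s(\dvr_U)) = \fillog_{[R/p]}j_{\ast}W_s(\dvr_U)$. The inclusion $F(\fillog_{[R/p]}) \subset \fillog_R$ follows from Lemma \ref{lempbbyf}(i) componentwise; the reverse inclusion is verified stalkwise at closed points of $X$, using Lemma \ref{lemaina} to deduce the integrality of a candidate $p$-th root from its bounded valuations at all generic points of $D$ passing through the point. The composition $\varphi_{s}^{(R,i)} \circ \bar F$ vanishes because $-F^{s-1}d \circ F$ expands to $-\sum_{i}(a_i^p)^{p^i - 1}d(a_i^p) = -p\sum_{i} a_i^{p^{i+1}-1}da_i = 0$. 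Finally, the dichotomy describing the kernel (trivial when $n_i \notin p\BZ$, equal to $\grlog_{[R/p],i}j_{\ast}W_s(\dvr_U)$ when $n_i \in p\BZ$) follows from Lemma \ref{lemgaus}(i) applied to the integer $n_i$, since the other components of $R$ and $R'$ agree.

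The heart of the argument is exactness at the middle. I proceed by induction on $s$. For $s = 1$, a section of $\grlog_{R,i}j_{\ast}\dvr_U$ killed by $\varphi_1^{(R,i)} = -d$ must be lifted to a $p$-th power modulo $\fillog_{R'}$. Since both sides are supported on $D_i$, I work at a closed point $x \in D_i$ in étale local coordinates $t_1, \ldots, t_d$ with $D_i = (t_1 = 0)$ and $D = (t_1 \cdots t_r = 0)$ near $x$. Writing a lift $a = u/(t_1^{n_1}\cdots t_r^{n_r})$ with $u$ in the local ring $A$, the vanishing $-da \in \fillog_{R'}j_{\ast}\Omega^1_U$ translates to the congruences
\begin{equation*}
t_j (\partial u/\partial t_j) \equiv n_j u \pmod{t_1}\quad (1 \le j \le r), \qquad \partial u/\partial t_j \equiv 0 \pmod{t_1}\quad (j > r).
\end{equation*}
Expanding the reduction $\bar u \in A/(t_1)$ in monomials in $t_2, \ldots, t_d$ shows that each nonzero monomial has exponent $\alpha_j \equiv n_j \pmod{p}$ for $1 \le j \le r$ and $\alpha_j \equiv 0 \pmod p$ for $j > r$. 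Hence $\bar u$ factors as $\prod_{j=2}^{r} t_j^{\epsilon_j} \cdot v^p$ with $0 \le \epsilon_j < p$ and $\epsilon_j \equiv n_j \pmod p$, using perfectness of $k$ to take $p$-th roots of coefficients. Setting $b = v/\prod_{j=1}^{r}t_j^{[n_j/p]}$, which lies in $\fillog_{[R/p]}$, yields $a \equiv b^p \pmod{\fillog_{R'}}$.

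For the inductive step $s \ge 2$, I apply Lemma \ref{lemesgrwk} with $t = 1$ to obtain a short exact sequence relating $\grlog_{R,i}j_{\ast}W_s(\dvr_U)$ to $\grlog_{R,i}j_{\ast}W_{s-1}(\dvr_U)$ (via $\bar V$) and to $\grlog_{[R/p^{s-1}]/[R'/p^{s-1}]}j_{\ast}W_1(\dvr_U)$ (via $\overline{\pr}_1$), together with a parallel sequence for the $\fillog_{[R/p]}/\fillog_{[R'/p]}$-term obtained by applying $[\cdot/p]$ and invoking Lemma \ref{lemgaus}(ii). These sit in a commutative diagram with the $\varphi^{(\bullet, i)}$-maps on the right; the inductive hypothesis at weight $R$ (length $s-1$) and weight $[R/p^{s-1}]$ (length $1$), together with a snake-lemma chase, yields exactness at the middle for $W_s$. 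The main obstacle is the $s = 1$ local computation: the monomial factor $\prod_{j=2}^{r} t_j^{\epsilon_j}$ that witnesses the $p$-th power structure is precisely what is absorbed by the passage from $[R/p]$ to $R$ via the Frobenius, and the perfectness of $k$ is essential for extracting the $p$-th root $v$.
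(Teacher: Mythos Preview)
Your base case $s=1$ is essentially sound, though argued differently from the paper: you directly characterize the kernel of $-d$ via an Euler-operator computation, whereas the paper reduces to a cartesian-square statement against the generic fibre and invokes Lemma~\ref{lemaina}. Your route is perfectly reasonable; the ``monomial expansion'' is cleanest in the completion, but the conclusion $w=v^p$ descends because $\ker(d)=(\dvr_{D_i,x})^p$ for the localization of a smooth algebra over the perfect field $k$, which holds at \emph{every} point of $D_i$, not only closed ones.

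The inductive step, however, has a real gap. You propose a commutative diagram with rows the short exact sequences of Lemma~\ref{lemesgrwk} (for $t=1$) and with the $\varphi^{(\bullet,i)}$-maps on the right. The left half of this diagram is fine, since $\varphi_{s}^{(R,i)}\circ\bar V=\varphi_{s-1}^{(R,i)}$: both land in the same target $\grlog_{R,i}j_\ast\Omega^1_U$. But there is \emph{no} map $\grlog_{R,i}j_\ast\Omega^1_U\to\grlog_{[R/p^{s-1}],i}j_\ast\Omega^1_U$ making the right square commute: for $a=(a_{s-1},\dots,a_0)$ one has $\varphi_s^{(R,i)}(a)$ containing the term $-a_{s-1}^{p^{s-1}-1}da_{s-1}$, while $\varphi_1^{([R/p^{s-1}],i)}(\overline{\pr}_1(a))=-da_{s-1}$, and these are not related by any natural morphism of the $\Omega^1$-graded pieces. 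Consequently, knowing $\varphi_s^{(R,i)}(a)=0$ does not let you conclude $\varphi_1^{([R/p^{s-1}],i)}(a_{s-1})=0$, so you cannot invoke the length-$1$ case to kill $\overline{\pr}_1(a)$ and reduce to length $s-1$. The snake-lemma chase you sketch therefore does not close.

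The paper circumvents this by \emph{not} inducting on the exactness statement itself. Instead it first compares the sheaf sequence to its generic stalk (diagram~(\ref{cdgrwksh})), where the bottom row is exact by Lemma~\ref{exseqgrwk}; exactness of the top row then follows once the left square is cartesian, i.e.\ once
\[
\grlog_{[R/p],i}j_\ast W_s(\dvr_U)=\grlog_{R,i}j_\ast W_s(\dvr_U)\cap j_{i\ast}\grlog_{n_i/p}W_s(K_i)
\quad\text{inside }j_{i\ast}\grlog_{n_i}W_s(K_i).
\]
This cartesian condition makes no reference to $\varphi$, and it \emph{is} compatible with the $V/\pr_1$ short exact sequences (Lemmas~\ref{lemesgrk} and~\ref{lemesgrwk}). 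The induction then goes through via the abstract Lemma~\ref{lemcap}. Your $s=1$ computation, suitably rephrased, establishes exactly the base of this cartesian induction; what is missing in your write-up is this reformulation, without which the passage to higher $s$ does not go through.
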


\begin{proof}
We may assume that $s\ge 1$, $I=\{1,\ldots,r\}$, and that $i=1$.
Let $j_{1}\colon \Spec K_{1}\rightarrow X$ be the canonical morphism.
We consider the commutative diagram
\begin{equation}
\label{cdgrwksh}
\xymatrix{
0 \ar[r] & \fillog_{[R/p],1}j_{*}W_{s}(\dvr_{U})/\fillog_{[R'/p],1}j_{*}W_{s}(\dvr_{U}) \ar[r]^-{\bar{F}} \ar[d] &
\grlog_{R,1}j_{*}W_{s}(\dvr_{U}) 
\ar[r]^-{\varphi_{s}^{(R,1)}} \ar[d] & \grlog_{R,1}j_{*}\Omega^{1}_{U} \ar[d] \\
0 \ar[r] & j_{1*}(\fillog_{[n_{1}/p]}W_{s}(K_{1})/\fillog_{[(n_{1}-1)/p]}W_{s}(K_{1})) \ar[r]^-{\bar{F}} &
j_{1*}\grlog_{n_{1}}W_{s}(K_{1}) \ar[r]^-{\varphi_{s}^{(n_{1})}} &
j_{1*}\grlog_{n_{1}}\Omega^{1}_{K_{1}},
}
\end{equation}
where the vertical arrows are inclusions.
Since the lower line is exact by Lemma \ref{exseqgrwk}, 
it is sufficient to prove that the left square in (\ref{cdgrwksh}) is cartesian.

If $n_{1}\notin p\BZ$, then the assertion follows since 
$\fillog_{[R/p],1}j_{*}W_{s}(\dvr_{U})/\fillog_{[R'/p],1}j_{*}W_{s}(\dvr_{U})=0$ and 
$\fillog_{[n_{1}/p]}W_{s}(K_{1})/\fillog_{[(n_{1}-1)/p]}W_{s}(K_{1})=0$ by Lemma \ref{lemgaus} (i).

Assume that $n_{1}\in p\BZ$.
Then we have $\fillog_{[R/p],1}j_{*}W_{s}(\dvr_{U})/\fillog_{[R'/p],1}j_{*}W_{s}(\dvr_{U})=
\grlog_{[R/p],1}j_{*}W_{s}(\dvr_{U})$ and
$\fillog_{[n_{1}/p]}W_{s}(K_{1})/\fillog_{[(n_{1}-1)/p]}W_{s}(K_{1})=\grlog_{n_{1}/p}W_{s}(K_{1})$
by Lemma \ref{lemgaus} (i).

We prove the assertion by the induction on $s$.
Suppose that $s=1$.
Since the assertion is local,
we may assume that $X=\Spec A$ is affine and that $D_{i}=(t_{i}=0)$ for $i\in I$, 
where $t_{i}\in A$ for $i\in I$.
Further we may assume that the invertible $\dvr_{D_{1}}$-modules
$\grlog_{R,1}j_{*}\dvr_{U}$ 
and $\grlog_{[R/p],1}j_{*}\dvr_{U}$
are gererated by $c_{0}=1/t_{1}^{n_{1}}\cdots t_{r}^{n_{r}}$ and
$c_{1}=1/t_{1}^{n_{1}/p}t_{2}^{m_{2}'}\cdots t_{r}^{m_{r}'}$ respectively,
where $m_{i}'=[n_{i}/p]$ for $i\in I-\{1\}$.
Let $k(D_{1})$ denote the functional field of $D_{1}$.
We identify $\grlog_{n_{1}}K_{1}$ with
$k(D_{1})\cdot c_{0}$
and $\grlog_{n_{1}/p}K_{1}$ with 
$k(D_{1})\cdot c_{1}$.

Let $\bar{a}$ be an element of $k(D_{1})$
such that $\bar{F}(\bar{a}c_{1})=\bar{a}^{p}c_{1}^{p}\in \grlog_{R,1}j_{*}\dvr_{U}$.
Since $(\bar{a}^{p}c_{1}^{p}/c_{0})\cdot c_{0}\in 
\grlog_{R,1}j_{*}\dvr_{U}= 
\dvr_{D_{1}}\cdot c_{0}$,
we have $\bar{a}^{p}c_{1}^{p}/c_{0}\in \dvr_{D_{1}}$.
Since $c_{1}^{p}/c_{0}=t_{2}^{n_{2}-pm_{2}'}\cdots t_{r}^{n_{r}-pm_{r}'}$
and $0\le n_{i}-pm_{i}^{\prime} <p$
for $i\in I-\{1\}$,
we have $\bar{a}\in \dvr_{D_{1}}$ by Lemma \ref{lemaina}.
Hence we have $\bar{a}c_{1}\in \dvr_{D_{1}}\cdot c_{1}=\grlog_{[R/p],1}j_{*}\dvr_{U}$.
Hence the assertion follows if $s=1$.

If $s> 1$, we put $\mf=j_{1*}\grlog_{n_{1}}W_{s-1}(K_{1})$, $\mf_{1}=\grlog_{R,1}j_{*}W_{s-1}(\dvr_{U})$, 
$\mf_{2}=j_{1*}\grlog_{n_{1}/p}W_{s-1}(K_{1})$, and 
$\mf_{3}=\grlog_{[R/p],1}j_{*}W_{s-1}(\dvr_{U})$.
Since the canonical morphisms $\mf_{1}\rightarrow \mf$ and $\mf_{3}\rightarrow \mf_{2}$ are injective
and both $\bar{F}\colon \mf_{3}\rightarrow \mf_{1}$ and $\bar{F}\colon \mf_{2}\rightarrow \mf$ 
are injective, we may identify $\mf_{i}$ with a subsheaf of $\mf$ for $i=1,2,3$.
We also put $\mg=j_{1*}\grlog_{n_{1}}W_{s}(K_{1})$, $\mg_{1}=\grlog_{R,1}j_{*}W_{s}(\dvr_{U})$, 
$\mg_{2}=j_{1*}\grlog_{n_{1}/p}W_{s}(K_{1})$, and 
$\mg_{3}=\grlog_{[R/p],1}j_{*}W_{s}(\dvr_{U})$.
We further put $\mh=j_{1*}(\grlog_{n}^{(s-1)}K_{1})$, 
$\mh_{1}=\grlog_{[R/p^{s-1}]/[R'/p^{s-1}]}j_{*}\dvr_{U}$,
$\mh_{2}=j_{1*}(\grlog^{(s-1)}_{n_{1}/p}K_{1})$, and 
$\mh_{3}=\grlog_{[R/p^{s}]/[R'/p^{s}]}j_{*}\dvr_{U}$.
Similarly as $\mf_{i}$, we may identify $\mg_{i}$ and $\mh_{i}$
with subsheaves of $\mg$ and $\mh$ respectively for $i=1,2,3$.

By the induction hypothesis, we have $\mf_{3}=\mf_{1}\cap \mf_{2}$.
If $n_{1}\notin p^{s}\BZ$, then $\mh_{2}=\mh_{3}=0$
by Lemma \ref{lemgaus} (i) and (\ref{filrdef}).
If $n_{1}\in p^{s}\BZ$, then we have $\mh_{3}=\mh_{1}\cap \mh_{2}$ 
by Lemma \ref{lemgaus} (i), (\ref{filrdef}), and the induction hypothesis.
By the commutativity of (\ref{cdgrwksh}), we have $\mg_{3}\subset \mg_{1}\cap \mg_{2}$.
Since exact sequences in Lemma \ref{lemesgrk} and Lemma \ref{lemesgrwk} 
in the case where $t=1$
are compatible with the inclusions of sheaves above,
the assertion follows by Lemma \ref{lemcap}.
\end{proof}

\begin{lem}
\label{lemshffil}
Let $f\colon \mf\rightarrow \mg$ be a surjection of sheaves on $X$.
Let $g\colon \mg \rightarrow \mathcal{H}$ be a morphism of sheaves on $X$.
We put $\Gamma=(\BZ_{\ge 0})^{r}$, where $r>0$ is an integer, and
let $1_{i}\in \Gamma$ be the element whose $i$-th component is $1$ and the others are $0$
for $i=1,\ldots,r$.
Let $\{\fillog_{n}\mf\}_{n\in \Gamma}$ and $\{\fillog_{n}\mathcal{H}\}_{n\in \Gamma}$
be increasing filtrations of $\mf$ and $\mathcal{H}$ respectively by product order.
Assume that $\bigcup_{n\in \Gamma}\fillog_{n}\mf=\mf$ and $\bigcup_{n\in \Gamma}
\fillog_{n}\mathcal{H}=\mathcal{H}$.
We put $\fillog_{n}\mg=f(\fillog_{n}\mf)$ for $n\in \Gamma$,
which define an increasing filtration of $\mg$.
If $g(\fillog_{n}\mg)\subset \fillog_{n}\mh$ for every $n\in \Gamma$ and if
the morphism
$\fillog_{n+1_{i}}\mg/\fillog_{n}\mg\rightarrow \fillog_{n+1_{i}}\mathcal{H}/\fillog_{n}\mathcal{H}$ 
induced by $g$ is injective for every $n\in \Gamma$ and $i=1,\ldots,r$, 
then we have $\fillog_{n}\mg=g^{-1}(\fillog_{n}\mathcal{H})$ for every $n\in \Gamma$.
\end{lem}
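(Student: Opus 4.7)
The plan is to establish the equality $\fillog_{n}\mg = g^{-1}(\fillog_{n}\mh)$ of subsheaves of $\mg$. The inclusion $\fillog_{n}\mg \subset g^{-1}(\fillog_{n}\mh)$ is immediate from the hypothesis $g(\fillog_{n}\mg)\subset \fillog_{n}\mh$, so all the content is in the reverse inclusion. Since both sides are subsheaves of $\mg$, I would verify this inclusion on stalks, which reduces the statement to the following purely algebraic claim: for each $x\in X$ and each $\bar y\in \mg_{x}$ with $g_{x}(\bar y)\in (\fillog_{n}\mh)_{x}$, one has $\bar y\in (\fillog_{n}\mg)_{x}$.

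To prove this algebraic claim, I would first use surjectivity of $f$ on stalks to lift $\bar y$ to some $\bar z\in \mf_{x}$, and then use the hypothesis $\bigcup_{m\in \Gamma}\fillog_{m}\mf=\mf$ to pick $m\in \Gamma$ with $\bar z\in (\fillog_{m}\mf)_{x}$. Since the filtration on $\mg$ is defined as the image of the filtration on $\mf$, this gives $\bar y\in (\fillog_{m}\mg)_{x}$. Replacing $m$ by the componentwise maximum of $m$ and $n$ (the filtration is increasing in the product order, so nothing is lost), I may assume $m\geq n$ in $\Gamma$.

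The remaining task is to reduce $m$ down to $n$. I would induct on the nonnegative integer $|m-n|:=\sum_{i=1}^{r}(m_{i}-n_{i})$. If $m=n$ the claim is immediate. Otherwise there exists $i$ with $m_{i}>n_{i}$, and setting $m':=m-1_{i}$ still satisfies $m'\geq n$. The image of $g_{x}(\bar y)$ in $(\fillog_{m}\mh/\fillog_{m'}\mh)_{x}$ vanishes, because $g_{x}(\bar y)\in (\fillog_{n}\mh)_{x}\subset (\fillog_{m'}\mh)_{x}$. By the injectivity hypothesis applied with $n$ replaced by $m'$, the image of $\bar y$ in $(\fillog_{m}\mg/\fillog_{m'}\mg)_{x}$ must also vanish, whence $\bar y\in (\fillog_{m'}\mg)_{x}$. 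This decreases $|m-n|$ by $1$, and iterating finishes the induction.

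I do not expect a serious obstacle; the argument is essentially a diagram chase at the stalk level. The only real point is to recognize that the product-order structure on $\Gamma$ forces us to use a multi-index induction on $|m-n|$, and that the single-step injectivity furnished by the hypothesis on $\fillog_{n+1_{i}}\mg/\fillog_{n}\mg\to \fillog_{n+1_{i}}\mh/\fillog_{n}\mh$ is exactly the input needed to carry out one induction step.
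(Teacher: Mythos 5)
Your proposal is correct and is essentially the stalkwise, element-level version of the paper's own argument: the paper shows that $\fillog_{n'}\mg/\fillog_{n}\mg\rightarrow \fillog_{n'}\mh/\fillog_{n}\mh$ is injective for every $n'\ge n$ by ascending induction along single steps $1_{i}$ and then passes to the filtered colimit to get injectivity of $\mg/\fillog_{n}\mg\rightarrow \mh/\fillog_{n}\mh$, while you descend from a level $m\ge n$ containing the given section down to $n$ one step $1_{i}$ at a time. Both arguments rest on exactly the same input, the single-step injectivity on the graded pieces, so the difference is only one of bookkeeping.
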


\begin{proof}
Let $n\in \Gamma$ be an element.
We prove that the morphism $\mg/\fillog_{n}\mg\rightarrow \mathcal{H}/\fillog_{n}\mathcal{H}$ is injective.
Since $\mf=\bigcup_{n\in \Gamma}\fillog_{n}\mf$ and $f$ is surjective, we have
$\mg=\bigcup_{n\in \Gamma}\fillog_{n}\mg$ and 
hence $\mg/\fillog_{n}\mg=\varinjlim_{n'}\fillog_{n'}\mg/\fillog_{n}\mg$, where $n'$ rums through
the elements of $\Gamma$ greater than $n$ by product order.
Since $\mh=\bigcup_{n\in \Gamma}\fillog_{n}\mh$, we have
$\mh/\fillog_{n}\mh=\varinjlim_{n'}\fillog_{n'}\mh/\fillog_{n}\mh$, where $n'$ rums through
the elements of $\Gamma$ greater than $n$.
Hence it is sufficient to prove that $\fillog_{n'}\mg/\fillog_{n}\mg\rightarrow \fillog_{n'}\mathcal{H}/\fillog_{n}\mathcal{H}$ is injective for every $n'\in \Gamma$ such that $n'\ge n$.
We prove this assertion by the induction on $n'$.

If $n'=n$, the assertion follows since $\fillog_{n'}\mg/\fillog_{n}\mg=0$ 
and $\fillog_{n'}\mh/\fillog_{n}\mh=0$.
For $n'>n$, take $i$ such that $n'-1_{i}\ge n$.
We consider the commutative diagram
\begin{equation}
\xymatrix{
0\ar[r] & \fillog_{n'-1_{i}}\mg/\fillog_{n}\mg \ar[r] \ar[d] &
\fillog_{n'}\mg/\fillog_{n}\mg \ar[r] \ar[d] & \fillog_{n'}\mg/\fillog_{n'-1_{i}}\mg \ar[r] \ar[d] & 0 \\
0\ar[r] & \fillog_{n'-1_{i}}\mathcal{H}/\fillog_{n}\mathcal{H} \ar[r]  &
\fillog_{n'}\mathcal{H}/\fillog_{n}\mathcal{H} \ar[r] & 
\fillog_{n'}\mathcal{H}/\fillog_{n'-1_{i}}\mathcal{H} \ar[r] & 0,
} \notag
\end{equation}
where the horizontal lines are exact.
By the induction hypothesis, the left vertical arrow is injective.
Since the right vertical arrow is injective, the middle vertical arrow is injective.
Hence the assertion follows.
\end{proof}

\begin{prop}
\label{proplogcd}
Let $R=\sum_{i\in I}n_{i}D_{i}$, where $n_{i}\in \BZ_{\ge 0}$ for $i\in I$. 
Let $j_{i}\colon \Spec K_{i}\rightarrow X$ be the canonical morphism for $i\in I$.
\begin{enumerate}
\item The subsheaf $\fillog_{R}R^{1}(\epsilon\circ j)_{*}\BZ/p^{s}\BZ$ is equal to the pull-back of
$\bigoplus_{i\in I}j_{i*}\fillog_{n_{i}}H^{1}(K_{i}, \BQ/\BZ)$ by the morphism 
$R^{1}(\epsilon \circ j)_{*}\BZ/p^{s}\BZ\rightarrow \bigoplus_{i\in I}j_{i*}H^{1}(K_{i},\BQ/\BZ)$.
\item Let $R^{\prime}=\sum_{i\in I}n_{i}'D_{i}$, where $n_{i}'\in \BZ_{\ge 0}$ and $n_{i}-1\le n_{i}'\le n_{i}$ for $i\in I$.
Then there exists a unique injection $\phi_{s}^{(R/R^{\prime})}\colon \grlog_{R/R'}R^{1}(\epsilon\circ j)_{\ast}\BZ/p^{s}\BZ\rightarrow \grlog_{R/R'}
j_{*}\Omega^{1}_{U}$ such that the following diagram is commutative:
\begin{equation}
\label{cdcflsh}
\xymatrix{
\grlog_{R/R'}j_{\ast}W_{s}(\dvr_{U}) \ar[dr]_-{\delta_{s}^{(R/R')}}
\ar[rr]^-{\varphi_{s}^{(R/R')}} & & \grlog_{R/R'}j_{\ast}\Omega^{1}_{U}\\
 & \grlog_{R/R'}R^{1}(\epsilon\circ j)_{\ast}\BZ/p^{s}\BZ. \ar[ur]_-{\phi_{s}^{(R/R')}} & 
}
\end{equation} 
\end{enumerate}
\end{prop}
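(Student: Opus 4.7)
The plan is to prove (ii) first, in the basic case $R' = R - D_i$ for a single $i$ with $n_i \geq 1$, then extend to general $R'$ by induction, and finally deduce (i) from (ii) via Lemma \ref{lemshffil}.

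For the basic case of (ii), the strategy is to show that $\ker \delta_s^{(R,i)}$ and $\ker \varphi_s^{(R,i)}$ coincide as subsheaves of $\grlog_{R,i}j_*W_s(\dvr_U)$; this simultaneously yields the factorization $\phi_s^{(R,i)}$ and its injectivity. By Proposition \ref{lemsone}, $\ker \varphi_s^{(R,i)}$ equals the image of $\bar{F}$ coming from $\fillog_{[R/p]}j_*W_s(\dvr_U)/\fillog_{[R'/p]}j_*W_s(\dvr_U)$, and since $[R/p] \leq R' = R - D_i$ when $n_i \geq 1$, this image coincides with the image of $\overline{F-1}$ from the same source (cf.\ the remark after (\ref{barfmoshf})). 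For $\ker \delta_s^{(R,i)}$, I would use the exact sequence (\ref{eqszshf}) to identify $\ker \delta_s = (F-1)j_*W_s(\dvr_U)$, combined with a sheafified version of Lemma \ref{lempbbyf} (iii) giving $(F-1)^{-1}(\fillog_R j_*W_s(\dvr_U)) = \fillog_{[R/p]}j_*W_s(\dvr_U)$; a short diagram chase then identifies this kernel with the image of $\overline{F-1}$ as well.

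For the general case of (ii), the hypothesis on $R'$ forces $R - R' = \sum_{j \in J}D_j$ for some $J \subset I$. I would enumerate $J = \{j_1, \ldots, j_k\}$, set $R_t = R' + D_{j_1} + \cdots + D_{j_t}$, and use the short exact sequences
\begin{equation}
0 \to \fillog_{R_{t-1}}/\fillog_{R'} \to \fillog_{R_t}/\fillog_{R'} \to \grlog_{R_t, j_t} \to 0 \notag
\end{equation}
for each of the three sheaves in (\ref{cdcflsh}), and apply the snake lemma (as in Lemma \ref{lemcap}) to inductively glue the basic-case injections $\phi_s^{(R_t, j_t)}$ into the desired $\phi_s^{(R/R')}$.

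Finally, for (i), I would apply Lemma \ref{lemshffil} with $f = \delta_s$, $g$ the restriction map $R^1(\epsilon \circ j)_*\BZ/p^s\BZ \to \bigoplus_{i\in I}j_{i*}H^1(K_i, \BQ/\BZ)$, and the obvious filtrations indexed by $(n_i)_{i \in I}$. One direction of the hypothesis of the lemma is immediate from the definition. The only nontrivial hypothesis is that $g$ induces an injection on graded pieces $\grlog_{R+D_i, i}R^1(\epsilon \circ j)_*\BZ/p^s\BZ \hookrightarrow j_{i*}\grlog_{n_i+1}H^1(K_i, \BQ/\BZ)$; this follows by composing further with the local injection $\phi^{(n_i+1)}$ from (\ref{refinedswaninj}), factoring the resulting composition through $\phi_s^{(R+D_i, i)}$ from (ii) followed by the generic-point injection $\grlog_{R+D_i, i}j_*\Omega^1_U \hookrightarrow j_{i*}\grlog_{n_i+1}\Omega^1_{K_i}$ (which is injective since the graded piece is locally free over $\dvr_{D_i}$ at $\mathfrak{p}_i$), and observing that both factors are injective. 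The main obstacle will be the careful sheaf-level identification of $\ker \delta_s^{(R,i)}$ with the image of $\overline{F-1}$ in the basic case of (ii), which requires the sheafified Lemma \ref{lempbbyf} (iii) to hold at the level of $j_*W_s(\dvr_U)$.
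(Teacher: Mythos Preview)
Your treatment of the basic case $R'=R-D_i$ and of part (i) is exactly what the paper does: identify $\ker\delta_s^{(R,i)}$ and $\ker\varphi_s^{(R,i)}$ with the common image of $\bar F=\overline{F-1}$ via Proposition~\ref{lemsone} and the sheafified Lemma~\ref{lempbbyf}(iii), then feed the resulting injection $\phi_s^{(R,i)}$ into Lemma~\ref{lemshffil}.

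For the general case of (ii) you diverge from the paper. The paper does \emph{not} induct along a chain $R'=R_0<\cdots<R_k=R$; instead it first proves (i), and then for arbitrary $R'$ embeds $\grlog_{R/R'}R^1(\epsilon\circ j)_*\BZ/p^s\BZ$ into $\bigoplus_{i\in J}j_{i*}\grlog_{n_i}H^1(K_i,\BQ/\BZ)$ (using (i)) and $\grlog_{R/R'}j_*\Omega^1_U$ into $\bigoplus_{i\in J}j_{i*}\grlog_{n_i}\Omega^1_{K_i}$, reducing the comparison of kernels to the local injections $\phi^{(n_i)}$ of (\ref{refinedswaninj}). This is short and avoids any gluing.

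Your inductive route is also valid, but the phrase ``apply the snake lemma (as in Lemma~\ref{lemcap})'' hides the one nontrivial point. Given injections on the sub $\grlog_{R_{t-1}/R'}$ and on the quotient $\grlog_{R_t,j_t}$, an abstract five--lemma/snake argument does \emph{not} produce a map $\phi_t$ on $\grlog_{R_t/R'}$: you first need $\ker\delta_s^{(R_t/R')}\subset\ker\varphi_s^{(R_t/R')}$, and this does not follow from the graded--piece equalities alone. What makes it work is exactly the description you already used in the basic case: $\ker\delta_s^{(R_t/R')}$ is the image of $(F-1)(\fillog_{[R_t/p]}j_*W_s(\dvr_U))$, and since $[R_t/p]\le R'$ this coincides with the image of $F$, which is annihilated by $-F^{s-1}d$. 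Once existence of $\phi_t$ is secured in this way, your snake--lemma chase (lift to $K^B$, subtract, land in $K_1^B=K_1^C$ by induction) does give injectivity. So your plan goes through, provided you invoke the explicit ``image of $\bar F$'' description at each inductive step, not only at the base; Lemma~\ref{lemcap} itself is not the right reference here.
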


\begin{proof}
Let $i$ be an element of $I$ such that $n_{i}\ge 2$.
Since the kernel of $\delta_{s}^{(R,i)}$ is the image of $\overline{F-1}$ (\ref{barfmoshf}) and
the morphisms $\bar{F}$ (\ref{barfshf}) and $\overline{F-1}$ (\ref{barfmoshf}) are the same,
the kernel of $\delta_{s}^{(R,i)}$ is equal to the kernel of $\varphi_{s}^{(R,i)}$
by Proposition \ref{lemsone}.
Since $\delta_{s}^{(R,i)}$ is surjective,
there exists a unique injection $\phi_{s}^{(R,i)}\colon 
\grlog_{R,i}R^{1}(\epsilon\circ j)_{\ast}\BZ/p^{s}\BZ\rightarrow
\grlog_{R,i}j_{\ast}\Omega^{1}_{U}$ such that the diagram (\ref{cdcflsh}) 
for $R'=R-D_{i}$ is commutative.

(i) Let $i$ be an element of $I$ such that $n_{i}\ge 2$. 
We consider the commutative diagram
\begin{equation}
\xymatrix{
\grlog_{R,i}R^{1}(\epsilon\circ j)_{*}\BZ/p^{s}\BZ \ar[r] \ar[d]_-{\phi_{s}^{(R,i)}}  & 
j_{i*}\grlog_{n_{i}}H^{1}(K_{i}, \BQ/\BZ) \ar[d]^-{\phi^{(n_{i})}} \\
\grlog_{R,i}j_{*}\Omega_{U}^{1} \ar[r] & j_{i*}\grlog_{n_{i}}\Omega^{1}_{K_{i}},
} \notag
\end{equation}
where the lower horizontal arrow is the inclusion and $\phi^{(n_{i})}$ is as in (\ref{refinedswaninj}).
Since the left vertical arrow is injective as proved above, the upper horizontal arrow is injective. 
Hence the assertion follows by applying Lemma \ref{lemshffil}
to the case where $\mf=j_{*}W_{s}(\dvr_{U})$, $\mg=R^{1}(\epsilon\circ j)_{*}\BZ/p^{s}\BZ$, 
and $\mathcal{H}=\bigoplus_{i\in I}j_{i*}H^{1}(K_{i},\BQ/\BZ)$.

(ii) Let $J$ be the subset of $I$ consisting of $i\in I$ such that $n_{i}'\neq n_{i}$.
We consider the commutative diagram
\begin{equation}
\xymatrix{
\grlog_{R/R'}j_{*}W_{s}(\dvr_{U}) \ar[rr]^-{\varphi_{s}^{(R/R')}} \ar[d]_-{\delta_{s}^{(R/R')}}
& & \grlog_{R/R'}j_{*}\Omega_{U}^{1} \ar[d] \\
\grlog_{R/R'}R^{1}(\epsilon\circ j)_{*}\BZ/p^{s}\BZ \ar[r] &
\bigoplus_{i\in J}j_{i*}\grlog_{n_{i}}H^{1}(K_{i}, \BQ/\BZ) \ar[r]^-{\oplus\phi^{(n_{i})}} &
\bigoplus_{i\in J}j_{i*}\grlog_{n_{i}}\Omega_{K_{i}}^{1},
} \notag
\end{equation}
where $\phi^{(n_{i})}$ is as in (\ref{refinedswaninj}) for $i\in J$.
By (i), the left lower horizontal arrow is injective.
Since $\grlog_{n_{i}}\Omega^{1}_{K_{i}}$ is the stalk of $\grlog_{R/R'}j_{*}\Omega_{U}^{1}$
at the generic point of $D_{i}$ for $i\in J$, the kernel of the canonical morphism
$\fillog_{R}j_{*}\Omega_{U}^{1}\rightarrow \bigoplus_{i\in J}j_{i*}\grlog_{n_{i}}\Omega_{K_{i}}^{1}$
is the intersection of $\fillog_{R-D_{i}}j_{*}\Omega_{U}^{1}$ for $i\in J$.
Hence the right vertical arrow is injective.
Since the right lower horizontal arrow is injective, the kernel of $\varphi_{s}^{(R/R')}$
is equal to that of $\delta_{s}^{(R/R')}$.
Since $\delta_{s}^{(R/R')}$ is surjective, the assertion follows.
\end{proof}

\begin{defn}
Let $\chi$ be an element of $H^{1}_{\et}(U,\BQ/\BZ)$.
We define the {\it Swan conductor divisor} $R_{\chi}$ of $\chi$
by $R_{\chi}=\sum_{i\in I}\sw(\chi|_{K_{i}})D_{i}$.
\end{defn}

\begin{defn}
\label{defrsw}
Let $\chi$ be an element of $H^{1}_{\et}(U,\BQ/\BZ)$.
Assume that $\sw(\chi|_{K_{i}})>0$ for some $i\in I$.
Let $p^{s}$ be the order of the $p$-part of $\chi$.
We put $Z=\Supp(R_{\chi})$. 
We define the {\it refined Swan conductor} $\rsw(\chi)$ of $\chi$
to be the image of the $p$-part of $\chi$ by the composition
\begin{align}
\Gamma (X,\fillog_{R_{\chi}}R^{1}(\epsilon\circ&j)_{*}\mathbf{Z}/p^{s}\mathbf{Z})\rightarrow \Gamma(X,\grlog_{R_{\chi}/(R_{\chi}-Z)}R^{1}(\epsilon\circ j)_{*}\BZ/p^{s}\BZ) \notag\\ 
&\xrightarrow{\phi_{s}^{(R_{\chi}/(R_{\chi}-Z))}(X)} 
\Gamma(X,\grlog_{R_{\chi}/(R_{\chi}-Z)}j_{*}\Omega^{1}_{U})=\Gamma(Z,\Omega^{1}_{X}(\log D)(R_{\chi})\otimes_{\dvr_{X}}\dvr_{Z}). \notag
\end{align}
\end{defn}

By the construction of $\phi_{s}^{(R_{\chi}/(R_{\chi}-Z))}$,
the germ $\rsw(\chi)_{\mathfrak{p}_{i}}$ of $\rsw(\chi)$
at the generic point $\mathfrak{p}_{i}$ of $D_{i}$ contained in $Z$ is equal to $\rsw(\chi|_{K_{i}})$.
This implies that $\rsw(\chi)$ in Definition \ref{defrsw} is none other than the refined Swan conductor of $\chi$
in the sense of \cite[(3.4.2)]{ka2}.

\subsection{Sheafification: non-logarithmic case}

We recall the definition of the radicial covering $S^{1/p}$ of a scheme
$S$ over a perfect field $k$ of characteristic $p>0$.
We consider the commutative diagram 
\begin{equation}
\xymatrix{
S^{1/p} \ar[d] \ar[r] & S \ar[r]^{F_{S}} \ar[d] & S \ar[d]\\
\Spec k \ar[r]_{F^{-1}_{k}} & \Spec k \ar[r]_{F_{k}}& \Spec k,}
\notag
\end{equation}
where the left square is the base change over $k$ by the inverse $F_{k}^{-1}$ of $F_{k}$.
The symbols $F_{S}$ and $F_{k}$ denote the absolute Frobenius of $S$ and $\Spec k$ respectively.
We define the \textit{radicial covering} $S^{1/p} \rightarrow S$ by the composition of morphisms in the upper line.

We keep the notation in Subsection \ref{sssheaf}. 

\begin{defn}
Let $R=\sum_{i\in I}n_{i}D_{i}$, where $n_{i}\in \mathbf{Z}_{\ge 1}$ for $i\in I$,
and let $j_{i}\colon \Spec K_{i}\rightarrow X$ denote the canonical morphism for $i\in I$. 
Let $r\ge 0$ be an integer.
\begin{enumerate}
\item We define subsheaves $\fil^{(r)}_{R}j_{*}W_{s}(\dvr_{U})$ 
and $\fillog''^{(r)}_{R}j_{*}W_{s}(\dvr_{U})$ of
Zariski sheaf $j_{*}W_{s}(\dvr_{U})$
to be
the pull-back of $\bigoplus_{i\in I}j_{i*}\fil^{(r)}_{n_{i}}W_{s}(K_{i})$ 
and $\bigoplus_{i\in I}j_{i*}\fillog''^{(r)}_{n_{i}}W_{s}(K_{i})$
by the morphism $j_{*}W_{s}(\dvr_{U})\rightarrow \bigoplus_{i\in I}
j_{i*}W_{s}(K_{i})$ respectively.

If $r=0$, then we simply write $\fil_{R}j_{*}W_{s}(\dvr_{U})$ and $\fillog_{R}''j_{*}W_{s}(\dvr_{U})$
for $\fil^{(0)}_{R}j_{*}W_{s}(\dvr_{U})$ and $\fillog_{R}''^{(0)}j_{*}W_{s}(\dvr_{U})$ respectively. 
\item We define a subsheaf $\fil_{R}R^{1}(\epsilon\circ j)_{\ast}\BZ/p^{s}\BZ$
of $R^{1}(\epsilon\circ j)_{\ast}\BZ/p^{s}\BZ$ to be the image of $\fil_{R}j_{\ast}W_{s}(\dvr_{U})$ by $\delta_{s}$ (\ref{deltsshf}).
\item We define a subsheaf $\fil_{R}j_{\ast}\Omega^{1}_{U}$
of $j_{*}\Omega_{U}^{1}$ to be $\Omega^{1}_{X}(R)$.
\end{enumerate}
\end{defn}

Similarly as in the logarithmic case, we consider the morphism $-F^{s-1}d$ (\ref{fsds}).
Let $R=\sum_{i\in I}n_{i}D_{i}$, where $n_{i}\in \BZ_{\ge 1}$ for $i\in I$.
Then $-F^{s-1}d$ (\ref{fsds}) induces the morphism
\begin{equation}
\fil_{R}j_{\ast}W_{s}(\dvr_{U})\rightarrow \fil_{R}j_{*}\Omega^{1}_{U}. \notag
\end{equation}
For $R'=\sum_{i\in I}n_{i}'D_{i}$, where $n_{i}'\in \BZ_{\ge 1}$ such that
$n_{i}'\le n_{i}$ for $i\in I$,
we put $\gr_{R/R'}=\fil_{R}/\fil_{R'}$.
Then the morphism $-F^{s-1}d$ (\ref{fsds}) induces the morphism
\begin{equation}
\label{phipshf}
\varphi'^{(R/R')}_{s}\colon \gr_{R/R'}j_{*}W_{s}(\dvr_{U})\rightarrow \gr_{R/R'}j_{\ast}\Omega^{1}_{U}.
\end{equation}
Let $D^{(R/R')}=R-R'\subset D$.
If $p\neq 2$ or there is no $i\in I$ such that $(n_{i},n_{i}')= (2,1)$,
let $\tilde{\varphi}'^{(R/R')}_{s}\colon \grlog_{R/R'}'j_{*}W_{s}(\dvr_{U})
\rightarrow \grlog_{R/R'}'j_{*}\Omega_{U}^{1}\otimes_{\dvr_{D^{(R/R')}}}
\dvr_{D^{(R/R')^{1/2}}}$ be the composition
\begin{equation}
\gr_{R/R'}j_{*}W_{s}(\dvr_{U})\xrightarrow{\varphi'^{(R/R')}_{s}} \gr_{R/R'}j_{\ast}\Omega^{1}_{U}\rightarrow
\grlog_{R/R'}'j_{*}\Omega_{U}^{1}\otimes_{\dvr_{D^{(R/R')}}}
\dvr_{D^{(R/R')^{1/2}}}.\notag
\end{equation} 
If otherwise, as in the proof of Proposition \ref{propnrar} (i),
there exists a unique morphism
\begin{equation}
\label{tildphi}
\tilde{\varphi}'^{(R/R')}_{s}\colon \grlog_{R/R'}'j_{*}W_{s}(\dvr_{U})
\rightarrow \grlog_{R/R'}'j_{*}\Omega_{U}^{1}\otimes_{\dvr_{D^{(R/R')}}}
\dvr_{D^{(R/R')^{1/p}}} 
\end{equation}
such that locally $\tilde{\varphi}'^{(R/R')}_{s}(\bar{a})=-\sum_{i=0}^{s-1}a_{i}^{p^{i}-1}da_{i}
+\sum_{(n_{i},n_{i}')=(2,1)}\sqrt{\overline{t_{i}^{2}a_{0}}}dt_{i}/t_{i}^{2}$
for every $\bar{a}\in \gr_{R/R'}j_{*}W_{s}(\dvr_{U})$ whose lift is $a=(a_{s-1},\ldots,a_{0})\in 
\fil_{R}j_{*}W_{s}(\dvr_{U})$ and for every local equation $t_{i}$ of $D_{i}$ for $i\in I$ such that
$(n_{i},n_{i}')=(2,1)$.

If $R=R'+D_{i}$ for some $i\in I$, then
we simply write $\gr_{R,i}$ for $\gr_{R/R'}$, $\tilde{\varphi}'^{(R,i)}$ for $\tilde{\varphi}'^{(R/R')}$,
and similarly for $\grlog''_{R/R'}$, $\grlog'^{(r)}_{R/R'}$, and $\grlog_{R/R'}''^{(r)}$.

\begin{lem}
\label{lempbasw}
Let $R=\sum_{i\in I}n_{i}D_{i}$, where $n_{i}\in \BZ_{\ge 1}$ for $i\in I$, and let $r\ge 0$ be an integer.
Then we have $\fillog_{R}''^{(r)}j_{*}W_{s}(\dvr_{U})=(F-1)^{-1}(\fillog_{R}'^{(r)}j_{*}W_{s}(\dvr_{U}))$.
Especially, we have $\fillog_{R}''j_{*}W_{s}(\dvr_{U})=(F-1)^{-1}(\fil_{R}j_{*}W_{s}(\dvr_{U}))$.
\end{lem}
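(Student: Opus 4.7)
The plan is to reduce the claim to its local counterpart at each generic point $\mathfrak{p}_{i}$ of the divisor $D$, which is already Lemma \ref{lemfilprep}~(v). First, I would unwind the definitions: both $\fil^{(r)}_{R}j_{*}W_{s}(\dvr_{U})$ and $\fillog''^{(r)}_{R}j_{*}W_{s}(\dvr_{U})$ are, by construction, the pull-backs along the localization map
\begin{equation*}
\iota \colon j_{*}W_{s}(\dvr_{U}) \longrightarrow \bigoplus_{i \in I} j_{i*}W_{s}(K_{i})
\end{equation*}
of $\bigoplus_{i \in I} j_{i*}\fil^{(r)}_{n_{i}}W_{s}(K_{i})$ and $\bigoplus_{i \in I} j_{i*}\fillog''^{(r)}_{n_{i}}W_{s}(K_{i})$, respectively. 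Hence membership in either subsheaf is detected at the generic points $\mathfrak{p}_{i}$ via the component restrictions $a \mapsto a|_{K_{i}}$.

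Next I would use that the Witt vector Frobenius $F$, and hence $F-1$, is functorial in the ring and so commutes with the localization map: $(F-1)(a)|_{K_{i}} = (F-1)(a|_{K_{i}})$ for every local section $a$ and every $i \in I$. Combining these two observations, a section $a$ lies in $(F-1)^{-1}(\fil^{(r)}_{R}j_{*}W_{s}(\dvr_{U}))$ if and only if $(F-1)(a|_{K_{i}}) \in \fil^{(r)}_{n_{i}}W_{s}(K_{i})$ for every $i$; by Lemma \ref{lemfilprep}~(v) applied in each local field $K_{i}$, this is equivalent to $a|_{K_{i}} \in \fillog''^{(r)}_{n_{i}}W_{s}(K_{i})$ for every $i$, i.e.\ to $a \in \fillog''^{(r)}_{R}j_{*}W_{s}(\dvr_{U})$. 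The ``especially'' clause is the case $r=0$, since by definition $\fil^{(0)}_{R}=\fil_{R}$ and $\fillog''^{(0)}_{R}=\fillog''_{R}$.

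I do not anticipate any serious obstacle, since all the arithmetic content, in particular the interaction of $F-1$ with the logarithmic and non-logarithmic filtrations and the Gauss-bracket identity $[[n/p^{r}]/p] = [n/p^{r+1}]$, has already been absorbed into Lemma \ref{lemfilprep}~(v) via Lemma \ref{lempbbyf}~(iii) and Lemma \ref{lemgaus}~(ii). The one thing that does need to be verified explicitly, though it is purely formal, is the compatibility of the preimage operation $(F-1)^{-1}$ with the pull-back of a subsheaf along $\iota$; this is a diagram chase that follows immediately from $\iota$ being a morphism of sheaves of rings that intertwines $F$ on source and target.
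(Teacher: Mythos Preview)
Your proposal is correct and follows essentially the same approach as the paper: reduce to the local statement at each generic point via the compatibility of $F-1$ with the canonical morphism $j_{*}W_{s}(\dvr_{U})\rightarrow \bigoplus_{i\in I}j_{i*}W_{s}(K_{i})$, and then apply Lemma~\ref{lemfilprep}~(v). The paper's proof is simply the two-line version of what you wrote out in detail.
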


\begin{proof}
Let $j_{i}\colon \Spec K_{i}\rightarrow X$ be the canonical morphism for $i\in I$.
Since $F-1$ is compatible with the canonical morphism $j_{*}W_{s}(\dvr_{U})
\rightarrow \bigoplus_{i\in I}j_{i*}W_{s}(K_{i})$, the assertions follow 
by Lemma \ref{lemfilprep} (v).
\end{proof}

\begin{lem}
\label{lemfilrsh}
Let $r\ge 0$ be an integer.
Let $R=\sum_{i\in I}n_{i}D_{i}$ and $R'=\sum_{i\in I}n_{i}'D_{i}$, where $n_{i},n_{i}'\in \BZ_{\ge 1}$
such that $n_{i}'=n_{i}/p^{r}$ if $n_{i}\in p^{r+1}\BZ$ and $n_{i}'=[(n_{i}-1)/p^{r}]$ if $n_{i}\notin
p^{r+1}\BZ$ for every $i\in I$.
\begin{enumerate}
\item $\fillog_{R}'^{(r)}j_{*}\dvr_{U}=\fillog_{R'}j_{*}\dvr_{U}$.
\item $\fillog_{R}''^{(r)}j_{*}\dvr_{U}=\fillog_{[R/p^{r+1}]}j_{*}\dvr_{U}$.
\end{enumerate}
\end{lem}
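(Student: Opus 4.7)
The plan is to reduce both equalities directly to the corresponding local statements at each generic point $\mathfrak{p}_{i}$ of $D_{i}$, which have already been established in Lemma \ref{lemfilp}. This works because all four sheaves involved are defined, by construction, as the pull-backs along the canonical morphism $j_{*}\dvr_{U} \rightarrow \bigoplus_{i\in I} j_{i*}K_{i}$ of the corresponding summand-wise filtrations on the local field side. Concretely, $\fillog_{R}'^{(r)}j_{*}\dvr_{U}$ and $\fillog_{R}''^{(r)}j_{*}\dvr_{U}$ are the pull-backs of $\bigoplus_{i}j_{i*}\fillog_{n_{i}}'^{(r)}K_{i}$ and $\bigoplus_{i}j_{i*}\fillog_{n_{i}}''^{(r)}K_{i}$ respectively (noting that $W_{1}(K_{i})=K_{i}$), while $\fillog_{R'}j_{*}\dvr_{U}$ and $\fillog_{[R/p^{r+1}]}j_{*}\dvr_{U}$ are the pull-backs of $\bigoplus_{i}j_{i*}\fillog_{n'_{i}}K_{i}$ and $\bigoplus_{i}j_{i*}\fillog_{[n_{i}/p^{r+1}]}K_{i}$ respectively.

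For (i), I would invoke Lemma \ref{lemfilp} (i) at each $i\in I$ separately. If $n_{i}\in p^{r+1}\BZ$, that lemma gives $\fillog_{n_{i}}'^{(r)}K_{i}=\fillog_{n_{i}/p^{r}}K_{i}$, which coincides with $\fillog_{n'_{i}}K_{i}$ by the definition of $n'_{i}$ in the hypothesis. If $n_{i}\notin p^{r+1}\BZ$, the same lemma yields $\fillog_{n_{i}}'^{(r)}K_{i}=\fillog_{[(n_{i}-1)/p^{r}]}K_{i}=\fillog_{n'_{i}}K_{i}$, again matching the hypothesis. Consequently, the two summand-wise local filtrations $\bigoplus_{i}j_{i*}\fillog_{n_{i}}'^{(r)}K_{i}$ and $\bigoplus_{i}j_{i*}\fillog_{n'_{i}}K_{i}$ are identical, and pulling back along the same morphism $j_{*}\dvr_{U}\rightarrow\bigoplus_{i}j_{i*}K_{i}$ produces the same subsheaf of $j_{*}\dvr_{U}$.

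For (ii), I would argue in exactly the same way using Lemma \ref{lemfilp} (ii): for every $i\in I$ we have $\fillog_{n_{i}}''^{(r)}K_{i}=\fillog_{[n_{i}/p^{r+1}]}K_{i}$, so the local filtrations coincide summand-wise and hence their pull-backs under $j_{*}\dvr_{U}\rightarrow\bigoplus_{i}j_{i*}K_{i}$ coincide, which is the desired equality of sheaves.

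There is no real obstacle in this argument; the lemma is essentially a sheaf-theoretic translation of Lemma \ref{lemfilp}, enabled by the fact that both filtrations are defined globally as pull-backs of the corresponding local filtrations on the $K_{i}$. The only mild point to keep in mind is the notational convention $[R/p^{r+1}]=\sum_{i\in I}[n_{i}/p^{r+1}]D_{i}$, which matches the summand-wise local formula in Lemma \ref{lemfilp} (ii).
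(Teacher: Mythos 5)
Your proof is correct and follows essentially the same route as the paper, whose own proof simply reads that the assertions follow from Lemma \ref{lemfilp}: since all four sheaves are by definition pull-backs of the corresponding local filtrations on the $K_{i}$ along $j_{*}\dvr_{U}\rightarrow\bigoplus_{i\in I}j_{i*}K_{i}$, the summand-wise local identities of Lemma \ref{lemfilp} give the sheaf-level equalities at once.
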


\begin{proof}
The assertions follow by Lemma \ref{lemfilp}.
\end{proof}

\begin{cor}
\label{corgrpshf}
Let the notation be as in Lemma \ref{lemfilrsh}.
Let $i$ be an element of $I$ such that $n_{i}\ge 2$.
\begin{enumerate}
\item Assume that $r\ge 1$.
Then $\gr^{(r)}_{R,i}j_{*}\dvr_{U}=\grlog_{R',i}j_{*}\dvr_{U}$ 
if $n_{i}\in p^{r+1}\BZ$ or $\ord_{p}(n_{i}-1)=r$,
and $\gr^{(r)}_{R,i}j_{*}\dvr_{U}=0$ if otherwise.
\item $\grlog''^{(r)}_{R,i}j_{*}\dvr_{U}=\grlog_{[R/p^{r+1}],i}j_{*}\dvr_{U}=\grlog_{[R'/p]}j_{*}\dvr_{U}$
if $n_{i}\in p^{r+1}\BZ$, and
$\grlog''^{(r)}_{R,i}j_{*}\dvr_{U}=0$ if $n_{i}\notin p^{r+1}\BZ$.
\end{enumerate}
\end{cor}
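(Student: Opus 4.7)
The plan is to transcribe the argument of Corollary \ref{corgrpk} into the sheaf setting, using Lemma \ref{lemfilrsh} in place of Lemma \ref{lemfilp} and tracking only the $i$-th coefficient, which is the only one that changes when passing from $R$ to $R-D_{i}$.

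For part (i), I would write $\gr^{(r)}_{R,i} j_{\ast}\dvr_{U}$ as the quotient of $\fil^{(r)}_{R}j_{\ast}\dvr_{U}$ by $\fil^{(r)}_{R-D_{i}}j_{\ast}\dvr_{U}$, apply Lemma \ref{lemfilrsh} (i) to both terms, and split into three cases depending on whether $n_{i}\in p^{r+1}\BZ$, or $\ord_{p}(n_{i}-1)=r$, or neither. In the first case the $R'$-coefficient at $i$ is $n_{i}/p^{r}$ and, since $n_{i}-1\notin p^{r+1}\BZ$, the new $i$-th coefficient for $R-D_{i}$ is $[(n_{i}-2)/p^{r}]=n_{i}/p^{r}-1$. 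In the second, $n_{i}-1\in p^{r}\BZ\setminus p^{r+1}\BZ$ yields $R'$-coefficient $(n_{i}-1)/p^{r}$, which again drops by one under $[(n_{i}-2)/p^{r}]$. In the remaining case a direct check using Lemma \ref{lemgaus} (i) shows the two coefficients coincide, so the quotient vanishes.

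For part (ii), Lemma \ref{lemfilrsh} (ii) rewrites the graded piece as $\fillog_{[R/p^{r+1}]}j_{\ast}\dvr_{U}/\fillog_{[(R-D_{i})/p^{r+1}]}j_{\ast}\dvr_{U}$, and Lemma \ref{lemgaus} (i) applied to the $i$-th coefficient gives difference $1$ when $n_{i}\in p^{r+1}\BZ$ and $0$ otherwise, yielding the case distinction. The displayed equality $\grlog_{[R/p^{r+1}],i}j_{\ast}\dvr_{U}=\grlog_{[R'/p]}j_{\ast}\dvr_{U}$ reduces to the divisor identity $[R/p^{r+1}]=[R'/p]$, which I would check coefficient by coefficient: when $n_{j}\in p^{r+1}\BZ$ both sides equal $n_{j}/p^{r+1}$; when $n_{j}\notin p^{r+1}\BZ$, Lemma \ref{lemgaus} (ii) collapses the nested floor $[[(n_{j}-1)/p^{r}]/p]$ to $[(n_{j}-1)/p^{r+1}]$, which equals $[n_{j}/p^{r+1}]$ by Lemma \ref{lemgaus} (i).

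No real obstacle is expected: the argument is pure floor-function bookkeeping built on top of Lemmas \ref{lemgaus} and \ref{lemfilrsh}. The only mildly delicate point is verifying that $[-2/p^{r}]=-1$ remains valid in the borderline case $p=2$, $r=1$ needed in the first subcase of (i), which is immediate.
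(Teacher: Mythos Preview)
Your proposal is correct and follows essentially the same route as the paper: the paper's proof is a one-liner citing Corollary \ref{corgrpk} and Lemma \ref{lemfilrsh} together with the divisor identity $[R/p^{r+1}]=[R'/p]$ (from Lemma \ref{lemgaus}), and what you have written is precisely the unpacking of that citation, carrying the case analysis of Corollary \ref{corgrpk} over to the sheaf side via Lemma \ref{lemfilrsh}. Your closing remark about $[-2/p^{r}]=-1$ is fine once one notes that $n_{i}/p^{r}$ is already an integer in that subcase, so $[(n_{i}-2)/p^{r}]=n_{i}/p^{r}+[-2/p^{r}]$.
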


\begin{proof}
Since $[R/p^{r+1}]=[R'/p]$ by Lemma \ref{lemgaus},
the assertions follow by Corollary \ref{corgrpk} and Lemma \ref{lemfilrsh}.
\end{proof}

Let $R=\sum_{i\in I}n_{i}D_{i}$
and $R'=\sum_{i\in I}n_{i}'D_{i}$, 
where $n_{i}, n_{i}'\in \mathbf{Z}_{\ge 1}$ and $n_{i}'\le n_{i}$ for every $i\in I$.
Let $0\le t\le s$ be integers.
Since we have $\pr_{t}(\fil_{R}j_{*}W_{s}(\dvr_{U}))=\fillog_{R}'^{(s-t)}j_{*}W_{t}(\dvr_{U})$ by
Lemma \ref{lemfilprep} (i),
we have the exact sequence
\begin{equation}
\label{esprvnlsh}
0\rightarrow \fil_{R}j_{*}W_{s-t}(\dvr_{U})\xrightarrow{V^{t}} \fil_{R}j_{*}W_{s}(\dvr_{U})
\xrightarrow{\pr_{t}} \fillog_{R}'^{(s-t)}j_{*}W_{t}(\dvr_{U})\rightarrow 0. 
\end{equation}
Similarly, since $\pr_{t}(\fillog_{R}''j_{*}W_{s}(\dvr_{U}))=\fillog_{R}''^{(s-t)}j_{*}W_{t}(\dvr_{U})$
by Lemma \ref{lemfilprep} (iii),
we have the exact sequence
\begin{equation}
\label{esprvppsh}
0\rightarrow \fillog_{R}''j_{*}W_{s-t}(\dvr_{U})\xrightarrow{V^{t}} \fillog''_{R}j_{*}W_{s}(\dvr_{U})
\xrightarrow{\pr_{t}} \fillog_{R}''^{(s-t)}j_{*}W_{t}(\dvr_{U})\rightarrow 0.
\end{equation}

\begin{lem}
\label{lemvtprt}
Let $R=\sum_{i\in I}n_{i}D_{i}$ and
$R'=\sum_{i\in I}n_{i}'D_{i}$, where $n_{i},n_{i}'\in \BZ_{\ge 1}$
and $n_{i}-1\le n_{i}'\le n_{i}$ for every $i\in I$.
Let $0\le t\le s$ be integers.
\begin{enumerate}
\item The exact sequence (\ref{esprvnlsh}) induces the exact sequence
\begin{equation}
0\rightarrow \gr_{R/R'}j_{*}W_{s-t}(\dvr_{U})\xrightarrow{\bar{V}^{t}}
\gr_{R/R'}j_{*}W_{s}(\dvr_{U})\xrightarrow{\overline{\pr}_{t}}
\grlog'^{(s-t)}_{R/R'}j_{*}W_{t}(\dvr_{U})\rightarrow 0. \notag
\end{equation}
\item The exact sequence (\ref{esprvppsh}) induces the exact sequence
\begin{equation}
0\rightarrow \grlog''_{R/R'}j_{*}W_{s-t}(\dvr_{U})\xrightarrow{\bar{V}^{t}}
\grlog''_{R/R'}j_{*}W_{s}(\dvr_{U})\xrightarrow{\overline{\pr}_{t}}
\grlog''^{(s-t)}_{R/R'}j_{*}W_{t}(\dvr_{U})\rightarrow 0. \notag
\end{equation}
\end{enumerate}
\end{lem}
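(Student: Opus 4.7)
The plan is to imitate the proof of Lemma \ref{lemesgrwk} in the logarithmic case, replacing the logarithmic filtrations by their non-logarithmic counterparts and applying the snake lemma.

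First I would set up the analog of diagram (\ref{cdgflogshf}) in each of the two cases. For part (i), since $n_{i}'\le n_{i}$ for every $i\in I$ we have inclusions $\fil_{R'}j_{*}W_{s}(\dvr_{U})\subset \fil_{R}j_{*}W_{s}(\dvr_{U})$ and, by Lemma \ref{lemfilprep} (i), the analogous inclusion $\fillog'^{(s-t)}_{R'}j_{*}W_{t}(\dvr_{U})\subset \fillog'^{(s-t)}_{R}j_{*}W_{t}(\dvr_{U})$. Thus the exact sequence (\ref{esprvnlsh}) for $R'$ maps termwise to (\ref{esprvnlsh}) for $R$ through these inclusions, giving a commutative diagram with two exact rows and three injective vertical arrows. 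Applying the snake lemma yields the claimed exact sequence
\begin{equation}
0\to \gr_{R/R'}j_{*}W_{s-t}(\dvr_{U})\xrightarrow{\bar{V}^{t}} \gr_{R/R'}j_{*}W_{s}(\dvr_{U})\xrightarrow{\overline{\pr}_{t}} \grlog'^{(s-t)}_{R/R'}j_{*}W_{t}(\dvr_{U})\to 0. \notag
\end{equation}

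For part (ii), the same strategy applies: by Lemma \ref{lemfilprep} (iii) we have $\pr_{t}(\fillog''_{R}j_{*}W_{s}(\dvr_{U}))=\fillog''^{(s-t)}_{R}j_{*}W_{t}(\dvr_{U})$ and likewise for $R'$, so (\ref{esprvppsh}) is functorial in the divisor under the inclusions $\fillog''_{R'}\subset \fillog''_{R}$ and $\fillog''^{(s-t)}_{R'}\subset \fillog''^{(s-t)}_{R}$. Again all vertical arrows between the two rows are inclusions, and the snake lemma produces the desired exact sequence on the $\grlog''_{R/R'}$-quotients.

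The routine but only mildly subtle point is verifying that the inclusions $\fil_{R'}\subset \fil_{R}$ and $\fillog''_{R'}\subset \fillog''_{R}$ really hold; this reduces stalkwise via Lemma \ref{lemfilprep} to the local statements $\fil_{n_{i}'}W_{s}(K_{i})\subset \fil_{n_{i}}W_{s}(K_{i})$ and $\fillog''_{n_{i}'}W_{s}(K_{i})\subset \fillog''_{n_{i}}W_{s}(K_{i})$, which follow at once from the definitions (\ref{filpwdef}) and (\ref{filpprwk}) together with the monotonicity of $\fillog_{\bullet}$ and of $[\bullet /p]$ in the first argument. I do not expect any genuine obstacle: the work is essentially an application of the snake lemma, parallel to Lemma \ref{lemesgrwk}, with the appropriate substitutions for the non-logarithmic filtrations.
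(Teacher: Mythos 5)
Your proof is correct and follows essentially the same route as the paper: the paper's proof simply says the assertions follow as in Lemma \ref{lemesgrwk}, i.e.\ by writing the sequences (\ref{esprvnlsh}) and (\ref{esprvppsh}) for $R'$ and for $R$ as the rows of a commutative diagram with inclusion vertical arrows and applying the snake lemma (exactness being checked on stalks), which is exactly your argument. The monotonicity checks you flag at the end are the same routine facts the paper absorbs into its definitions of the increasing filtrations $\fil^{(r)}_{m}$ and $\fillog''^{(r)}_{m}$, so nothing is missing.
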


\begin{proof}
The assertions follow similarly as the proof of Lemma \ref{lemesgrwk}.
\end{proof}

Let $r\ge 0$ be an integer.
By Lemma \ref{lempbasw}, 
the morphism $F-1\colon j_{*}W_{s}(\dvr_{U})\rightarrow j_{*}W_{s}(\dvr_{U})$ 
induces the injection
\begin{equation}
\overline{F-1}\colon \grlog''^{(r)}_{R/R'}j_{*}W_{s}(\dvr_{U})\rightarrow
\grlog'^{(r)}_{R/R'}j_{*}W_{s}(\dvr_{U}). \notag
\end{equation}
Especially, the morphism $F-1$ induces the injection
\begin{equation}
\overline{F-1}\colon \grlog''_{R/R'}j_{*}W_{t}(\dvr_{U})\rightarrow
\grlog'_{R/R'}j_{*}W_{t}(\dvr_{U}). \notag
\end{equation}

\begin{lem}
\label{lemnlogex}
Let $R=\sum_{i\in I}n_{i}D_{i}$, where $n_{i}\in \BZ_{\ge 1}$ for $i\in I$.
Let $s\ge 0$ be an integer and let $i$ be an element of $I$ such that $n_{i}\ge 2$.
Then we have the exact sequence
\begin{equation}
0\rightarrow \grlog''_{R,i}j_{*}W_{s}(\dvr_{U})\xrightarrow{\overline{F-1}} \gr_{R,i}j_{*}W_{s}(\dvr_{U})
\xrightarrow{\tilde{\varphi}'^{(R,i)}_{s}} \gr_{R,i}j_{*}\Omega^{1}_{U}\otimes_{\dvr_{D_{i}}}
\dvr_{D_{i}^{1/p}}.
\notag
\end{equation}
\end{lem}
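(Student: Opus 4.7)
The plan is to imitate the proof of Proposition \ref{lemsone}, using Lemma \ref{lemexas} as the local input and promoting it to a sheaf statement via a Cartesian-square argument. Let $j_{i}\colon \Spec K_{i}\rightarrow X$ denote the canonical morphism and consider the commutative diagram
\begin{equation}
\xymatrix{
0 \ar[r] & \grlog''_{R,i}j_{*}W_{s}(\dvr_{U}) \ar[r]^-{\overline{F-1}} \ar[d] & \gr_{R,i}j_{*}W_{s}(\dvr_{U}) \ar[r]^-{\tilde{\varphi}'^{(R,i)}_{s}} \ar[d] & \gr_{R,i}j_{*}\Omega^{1}_{U}\otimes_{\dvr_{D_{i}}}\dvr_{D_{i}^{1/p}} \ar[d] \\
0 \ar[r] & j_{i*}\grlog''_{n_{i}}W_{s}(K_{i}) \ar[r]^-{\overline{F-1}} & j_{i*}\gr_{n_{i}}W_{s}(K_{i}) \ar[r]^-{\tilde{\varphi}'^{(n_{i})}_{s}} & j_{i*}(\gr_{n_{i}}\Omega^{1}_{K_{i}}\otimes_{F_{K_{i}}} F_{K_{i}}^{1/p})
}
\notag
\end{equation}
in which the vertical arrows are the canonical inclusions into the stalk at $\mathfrak{p}_{i}$. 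The bottom row is exact by Lemma \ref{lemexas}, so it suffices to prove that the left square is Cartesian.

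I would prove this by induction on $s$. For the base case $s=1$, the assertion is local, so reduce to $X=\Spec A$ affine with $D_{j}=(t_{j}=0)$ and with the invertible $\dvr_{D_{i}}$-modules in question generated by explicit monomials $c_{0}$ and $c_{1}$ (with $c_{1}^{p}/c_{0}=\prod_{j\neq i}t_{j}^{n_{j}-p[n_{j}/p]}$ having all exponents in $[0,p)$), exactly as in the proof of Proposition \ref{lemsone}. If $n_{i}\notin p\BZ$ then $\grlog''_{R,i}j_{*}\dvr_{U}=0$ by Corollary \ref{corgrpshf} and one just needs injectivity of $\tilde{\varphi}'^{(R,i)}_{1}$ on this stalk, which follows from the corresponding local injectivity combined with Lemma \ref{lemaina}. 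If $n_{i}\in p\BZ$, then given $\bar{a}\in k(D_{i})$ with $\overline{F-1}(\bar{a}c_{1})\in \gr_{R,i}j_{*}\dvr_{U}$, the same Lemma \ref{lemaina} applied to $\bar{a}^{p}c_{1}^{p}/c_{0}\in \dvr_{D_{i}}$ forces $\bar{a}\in \dvr_{D_{i}}$, which is what is needed.

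For the induction step $s>1$, I would apply Lemma \ref{lemcap}, setting $\mf,\mg,\mh$ to be the stalks at $\mathfrak{p}_{i}$ of $j_{i*}\gr_{n_{i}}W_{s-1}(K_{i})$, $j_{i*}\gr_{n_{i}}W_{s}(K_{i})$, and $j_{i*}\grlog'^{(s-1)}_{n_{i}}W_{1}(K_{i})$, with the $\mf_{\bullet},\mg_{\bullet},\mh_{\bullet}$ chosen as the corresponding sheaf-theoretic sub-objects $\gr_{R,i}j_{*}W_{\bullet}(\dvr_{U})$ and $\grlog''_{R,i}j_{*}W_{\bullet}(\dvr_{U})$ (together with their stalks). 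The exact sequences in Lemma \ref{lemvtprt} and their local counterparts in Corollary \ref{cornlexk} (taken with $t=s-1$) supply the rows, while the identifications $\grlog''^{(s-1)}_{R,i}j_{*}\dvr_{U}=\grlog_{[R/p^{s}],i}j_{*}\dvr_{U}$ of Corollary \ref{corgrpshf} and its local counterpart Corollary \ref{corgrpk} let the induction hypothesis be applied to the $\mh$-row.

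The main obstacle is the exceptional case $(p,n_{i})=(2,2)$, where $\tilde{\varphi}'^{(R,i)}_{s}$ is not linear over $F_{K_{i}}$ but involves the square-root term $\sqrt{\overline{t_{i}^{2}a_{0}}}\,dt_{i}/t_{i}^{2}$ and takes values in the radicial cover $\dvr_{D_{i}^{1/2}}$. In the base case one has to repeat the kernel analysis of Proposition \ref{propnrar} (ii) in the monomial setting, checking that once the square root is extracted one can still apply Lemma \ref{lemaina} to conclude integrality; in the induction step one must verify that the radicial covering factor does not obstruct the Cartesian diagram argument, which it does not because the map from the sheaf to the local stalk is already compatible with the base change $\dvr_{D_{i}}\rightarrow \dvr_{D_{i}^{1/2}}$.
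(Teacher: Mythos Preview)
Your approach is essentially the paper's, and the overall architecture (reduce to Cartesian left square, induction on $s$, base case via Lemma \ref{lemaina}, induction step via Lemma \ref{lemcap} fed by the $V^{t}/\pr_{t}$ d\'evissage) is correct. Three small corrections are worth flagging.

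First, in the base case $n_{i}\in p\BZ$ you describe $\gr_{R,i}j_{*}\dvr_{U}$ as an invertible $\dvr_{D_{i}}$-module, but it is not: since $\fil_{n_{i}}K_{i}=\fillog_{n_{i}}K_{i}$ and $\fil_{n_{i}-1}K_{i}=\fillog_{n_{i}-2}K_{i}$, one has $\gr_{n_{i}}K_{i}=\fillog_{n_{i}}K_{i}/\fillog_{n_{i}-2}K_{i}$, so $\gr_{R,i}j_{*}\dvr_{U}$ is supported on the thickening $2D_{i}$. The paper therefore gets $(\bar{a}^{p}c_{1}^{p}-\bar{a}c_{1})/c_{0}\in\dvr_{2D_{1}}$ and then reduces modulo $D_{1}$ (using that $c_{1}/c_{0}$ has positive order along $D_{1}$) before invoking Lemma \ref{lemaina}. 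Your argument is missing this extra reduction step; it is not ``exactly as in Proposition \ref{lemsone}.''

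Second, in the induction step the Cartesian property for the $\mh$-row is not the ``induction hypothesis'': after the identifications of Corollaries \ref{corgrpk} and \ref{corgrpshf}, the map $\overline{F-1}\colon\grlog''^{(s-1)}_{R,i}\to\grlog'^{(s-1)}_{R,i}$ becomes the logarithmic $\bar{F}\colon\grlog_{[R'/p],i}\to\grlog_{R',i}$, so what is invoked is the $s=1$ case of Proposition \ref{lemsone}, not of the present lemma. (Also, you write ``$t=s-1$'' but your choice of $\mf,\mg,\mh$ corresponds to $t=1$.)

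Third, the exceptional case $(p,n_{i})=(2,2)$ needs no separate treatment: Lemma \ref{lemexas} already covers it locally, and the Cartesian-square reduction is insensitive to the target of $\tilde{\varphi}'^{(R,i)}_{s}$. The paper handles it uniformly.
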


\begin{proof}
We may assume that $s\ge 1$, $I=\{1,\ldots,r\}$, and that $i=1$.
Let $j_{1}\colon \Spec K_{1}\rightarrow X$ be the canonical morphism.
We consider the commutative diagram
\begin{equation}
\label{cdexnlog}
\xymatrix{0\ar[r] &
\grlog''_{R,1}j_{*}W_{s}(\dvr_{U})\ar[r]^-{\overline{F-1}} \ar[d] &
\gr_{R,1}j_{*}W_{s}(\dvr_{U}) \ar[r]^-{\varphi'^{(R,1)}_{s}} \ar[d] &
\gr_{R,1}j_{*}\Omega_{U}^{1}\otimes_{\dvr_{D_{1}}}\dvr_{D_{1}^{1/p}} \ar[d]\\
0\ar[r] &
j_{1*}\grlog''_{n_{1}}W_{s}(K_{1})\ar[r]^-{\overline{F-1}} &
j_{1*}\gr_{n_{1}}W_{s}(K_{1}) \ar[r]^-{\varphi'^{(n_{1})}_{s}} &
j_{1*}(\gr_{n_{1}}\Omega^{1}_{K_{1}}\otimes_{F_{K_{1}}}F_{K_{1}}^{1/p}),
}
\end{equation}
where $F_{K_{1}}$ denotes the residue field of $K_{1}$ and
the vertical arrows are canonical injections.
By Lemma \ref{lemexas}, the lower horizontal line is exact.
Hence it is sufficient to prove that the left square
in (\ref{cdexnlog}) is cartesian.

We prove the assertion by the induction on $s$. 
Suppose that $s=1$.
If $n_{1}\notin p\BZ$, then we have $\grlog''_{n_{1}}W_{s}(K_{1})=0$ and
$\grlog''_{R,1}j_{*}\dvr_{U}=0$ by Corollary \ref{corgrpk} (ii) and Corollary \ref{corgrpshf} (ii).
Hence the assertion follows in this case.

Assume that $n_{1}\in p\BZ$.
By (\ref{filpwdef}), we have $\gr_{n_{1}}K_{1}=\fillog_{n_{1}}K_{1}/\fillog_{n_{1}-2}K_{1}$.
By Corollary \ref{corgrpk} (ii),
we have $\grlog''_{n_{1}}K_{1}=\grlog_{n_{1}/p}K_{1}$.
Since the assertion is a local property,
we may assume that $X=\Spec A$ is affine and
that $D_{i}=(t_{i}=0)$ for $i\in I$, where $t_{i}\in A$ for $i\in I$.
Further we may assume that the invertible $\dvr_{2D_{1}}$-module 
$\gr_{R,1}j_{*}\dvr_{U}$ is generated by 
$c_{0}=1/t_{1}^{n_{1}}\cdots t_{r}^{n_{r}}$,
and that the invertible $\dvr_{D_{1}}$-module $\grlog''_{R,1}j_{*}\dvr_{U}$
is generated by $c_{1}=1/t_{1}^{n_{1}/p}t_{2}^{m_{2}'}\cdots t_{r}^{m_{r}'}$, where $m_{i}'=[n_{i}/p]$ 
for $i\in I-\{1\}$.
Let $R(2D_{1})$ denote the stalk of $\dvr_{2D_{1}}$ at the generic point of $2D_{1}$
and let $k(D_{1})$ denote the functional field of $D_{1}$.
Then we may identify $\gr_{n_{1}}K_{1}$ with $R(2D_{1})\cdot c_{0}$
and $\grlog''_{n_{1}}K_{1}$ with $k(D_{1})\cdot c_{1}$.

Let $\bar{a}$ be an element of $k(D_{1})$ such that
$\overline{(F-1)}(\bar{a}c_{1})\in \gr_{R,1}j_{*}\dvr_{U}$.
Since we have $\overline{(F-1)}(\bar{a}c_{1})=((\bar{a}^{p}c_{1}^{p}-\bar{a}c_{1})/c_{0})\cdot c_{0}
\in \gr_{R,1}j_{*}\dvr_{U}=\dvr_{2D_{1}}\cdot c_{0}$, 
we have $(\bar{a}^{p}c_{1}^{p}-\bar{a}c_{1})/c_{0}\in \dvr_{2D_{1}}$.
Since $c_{1}/c_{0}=t_{1}^{n_{1}-n_{1}/p}t_{2}^{n_{2}-m_{2}'}
\cdots t_{r}^{n_{r}-m_{r}'}$ and $n_{1}-n_{1}/p\ge 1$,
we have $(\bar{a}^{p}c_{1}^{p}-\bar{a}c_{1})/c_{0}=\bar{a}^{p}c_{1}^{p}/c_{0}$
in $\dvr_{D_{1}}$.
Since $c_{1}^{p}/c_{0}=t_{2}^{n_{2}-pm_{2}'}\cdots t_{r}^{n_{r}-pm_{r}'}$
and $0\le n_{i}-pm_{i}'< p$ for $i\in I-\{1\}$,
we have $\bar{a}\in \dvr_{D_{1}}$ by Lemma \ref{lemaina}.
Hence we have $\bar{a}c_{1}\in \dvr_{D_{1}}\cdot c_{1}=\grlog''_{R,1}j_{*}\dvr_{U}$.
Thus the assertion follows if $s=1$.

If $s> 1$, we put $\mf=j_{1*}\gr_{n_{1}}W_{s-1}(K_{1})$, $\mf_{1}=\gr_{R,1}j_{*}W_{s-1}(\dvr_{U})$, 
$\mf_{2}=j_{1*}\grlog''_{n_{1}}W_{s-1}(K_{1})$, and 
$\mf_{3}=\grlog''_{R,1}j_{*}W_{s-1}(\dvr_{U})$.
Since the canonical morphisms $\mf_{1}\rightarrow \mf$ and $\mf_{3}\rightarrow \mf_{2}$ are injective
and both $\overline{F-1}\colon \mf_{3}\rightarrow \mf_{1}$ and $\overline{F-1}\colon \mf_{2}\rightarrow
\mf$ are injective, 
we may identify $\mf_{i}$ with a subsheaf of $\mf$ for $i=1,2,3$.
We also put $\mg=j_{1*}\gr_{n_{1}}W_{s}(K_{1})$, $\mg_{1}=\gr_{R,1}j_{*}W_{s}(\dvr_{U})$, 
$\mg_{2}=j_{1*}\grlog''_{n_{1}}W_{s}(K_{1})$, and 
$\mg_{3}=\grlog''_{R,1}j_{*}W_{s}(\dvr_{U})$.
Further we put $\mh=j_{1*}\grlog_{n_{1}}'^{(s-1)}K_{1}$, $\mh_{1}=\grlog'^{(s-1)}_{R,1}j_{*}\dvr_{U}$,
$\mh_{2}=j_{1*}\grlog''^{(s-1)}_{n_{1}}K_{1}$, and 
$\mh_{3}=\grlog''^{(s-1)}_{R,1}j_{*}\dvr_{U}$.
Similarly as $\mf_{i}$, we may identify $\mg_{i}$ and $\mh_{i}$ with subsheaves
of $\mg$ and $\mh$ respectively for $i=1,2,3$.

By the induction hypothesis, we have $\mf_{3}=\mf_{1}\cap \mf_{2}$.
If $n_{1}\notin p^{s}\BZ$, then we have $\mh_{2}=\mh_{3}=0$ by 
Corollary \ref{corgrpk} (ii) and Corollary \ref{corgrpshf} (ii).
If $n_{1}\in p^{s}\BZ$, then we have $\mh_{3}=\mh_{1}\cap \mh_{2}$
by Corollary \ref{corgrpk}, Corollary \ref{corgrpshf}, and 
the case where $s=1$ in the proof of Proposition \ref{lemsone}.
By the commutativity of (\ref{cdexnlog}), we have $\mg_{3}\subset \mg_{1}\cap \mg_{2}$.
Since exact sequences in Corollary \ref{cornlexk} and Lemma \ref{lemvtprt} 
in the case where $t=1$ are compatible with the inclusions of sheaves above, 
the assertion follows by Lemma \ref{lemcap}.
\end{proof}

\begin{prop}
\label{propnlogcd}
Let $R=\sum_{i\in I}n_{i}D_{i}$, where $n_{i}\in \BZ_{\ge 1}$ for $i\in I$.
Let $j_{i}\colon \Spec K_{i}\rightarrow X$ be the canonical morphism for $i\in I$.
\begin{enumerate}
\item The subsheaf $\fil_{R}R^{1}(\epsilon\circ j)_{*}\BZ/p^{s}\BZ$ is equal to the pull-back of 
$\bigoplus_{i\in I}j_{i*}\fil_{n_{i}}H^{1}(K_{i},\mathbf{Q}/\mathbf{Z})$ by the morphism
$R^{1}(\epsilon\circ j)_{*}\mathbf{Z}/p^{s}\mathbf{Z}
\rightarrow \bigoplus_{i\in I}j_{i*}H^{1}(K_{i},\mathbf{Q}/\mathbf{Z})$.
\item Let $R'=\sum_{i\in I}n'_{i}D_{i}$, where $n'_{i}\in \BZ_{\ge 1}$
such that $n_{i}-1\le n'_{i} \le n_{i}$ for $i\in I$.
Then there exists a unique injection
$\phi_{s}'^{(R/R')}\colon \gr_{R/R'}R^{1}(\epsilon\circ j)_{*}\BZ/p^{s}\BZ
\rightarrow \gr_{R/R'}j_{*}\Omega^{1}_{U}\otimes_{\dvr_{D^{(R/R')}}}\dvr_{D^{(R/R')1/p}}$
such that the following diagram is commutative: 
\begin{equation}
\label{cdcfnlsh}
\xymatrix{\gr_{R/R'}j_{*}W_{s}(\dvr_{U}) \ar[dr]_-{\delta'^{(R/R')}_{s}}
\ar[rr]^-{\tilde{\varphi}'^{(R/R')}_{s}} & & \gr_{R/R'}j_{*}\Omega^{1}_{U}
\otimes_{\dvr_{D^{(R/R')}}}\dvr_{D^{(R/R')1/p}}\\
& \gr_{R/R'}R^{1}(\epsilon\circ j)_{*}\BZ/p^{s}\BZ. \ar[ur]_-{\phi'^{(R/R')}_{s}} &
}
\end{equation}
\end{enumerate}
\end{prop}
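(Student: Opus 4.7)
The plan is to mirror the proof of Proposition \ref{proplogcd}, using Lemma \ref{lemnlogex} in place of Proposition \ref{lemsone} as the key local-to-global input, and keeping careful track of the radicial covering that appears in the exceptional case $p=2$.

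First I would handle the sub-case of (ii) where $R'=R-D_i$ for some $i\in I$ with $n_i\ge 2$. By Lemma \ref{lempbasw}, $(F-1)^{-1}(\fil_{R}j_{*}W_{s}(\dvr_{U}))=\fillog''_{R}j_{*}W_{s}(\dvr_{U})$, so restricting the exact sequence (\ref{eqszshf}) to $\fil_{R}$ and passing to the graded quotient at $D_i$ identifies $\ker\delta'^{(R,i)}_{s}$ with the image of $\overline{F-1}\colon\grlog''_{R,i}j_{*}W_{s}(\dvr_{U})\to\gr_{R,i}j_{*}W_{s}(\dvr_{U})$. Lemma \ref{lemnlogex} says this image equals $\ker\tilde\varphi'^{(R,i)}_{s}$, and since $\delta'^{(R,i)}_{s}$ is surjective the unique injection $\phi'^{(R,i)}_{s}$ making (\ref{cdcfnlsh}) commute exists.

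Next, for part (i), I would apply Lemma \ref{lemshffil} to $\mf=j_{*}W_{s}(\dvr_{U})$, $\mg=R^{1}(\epsilon\circ j)_{*}\BZ/p^{s}\BZ$, and $\mh=\bigoplus_{i\in I}j_{i*}H^{1}(K_i,\BQ/\BZ)$, with $\Gamma=(\BZ_{\ge 0})^{I}$ obtained by shifting the indexing $n_i\mapsto n_i-1$ of $\fil_{R}$; both filtrations are exhaustive since each stalk $W_{s}(K_i)=\bigcup_{n}\fil_{n}W_{s}(K_i)$. The graded-step injectivity hypothesis is verified by combining the injection $\phi'^{(R,i)}_{s}$ just constructed with the local injection $\phi'^{(n_i)}$ of (\ref{charforminj}) (or of Proposition \ref{propnrar} (ii) in the exceptional case), via the commutative square whose right vertical arrow $\gr_{R,i}j_{*}\Omega^{1}_{U}\otimes_{\dvr_{D_i}}\dvr_{D_i^{1/p}}\to j_{i*}(\gr_{n_i}\Omega^{1}_{K_i}\otimes_{F_{K_i}}F_{K_i}^{1/p})$ is an injection into the stalk at $\mathfrak{p}_i$.

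For the general case of (ii), I would argue exactly as in Proposition \ref{proplogcd} (ii). Let $J=\{i\in I:n_i'<n_i\}$. Part (i) yields an injection $\gr_{R/R'}R^{1}(\epsilon\circ j)_{*}\BZ/p^{s}\BZ\hookrightarrow\bigoplus_{i\in J}j_{i*}\gr_{n_i}H^{1}(K_i,\BQ/\BZ)$, and post-composition with $\bigoplus_{i\in J}\phi'^{(n_i)}$ gives an injection into $\bigoplus_{i\in J}j_{i*}(\gr_{n_i}\Omega^{1}_{K_i}\otimes_{F_{K_i}}F_{K_i}^{1/p})$. On the other side, the kernel of $\fil_{R}j_{*}\Omega^{1}_{U}\to\bigoplus_{i\in J}j_{i*}\gr_{n_i}\Omega^{1}_{K_i}$ is $\bigcap_{i\in J}\fil_{R-D_i}j_{*}\Omega^{1}_{U}=\fil_{R-\sum_{i\in J}D_i}j_{*}\Omega^{1}_{U}$, so the right vertical map $\gr_{R/R'}j_{*}\Omega^{1}_{U}\otimes_{\dvr_{D^{(R/R')}}}\dvr_{D^{(R/R')^{1/p}}}\to\bigoplus_{i\in J}j_{i*}(\gr_{n_i}\Omega^{1}_{K_i}\otimes_{F_{K_i}}F_{K_i}^{1/p})$ is injective once one checks that flat base change to the radicial cover preserves this. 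Chasing the diagram then forces $\ker\tilde\varphi'^{(R/R')}_{s}=\ker\delta'^{(R/R')}_{s}$, and surjectivity of $\delta'^{(R/R')}_{s}$ produces the desired unique injection $\phi'^{(R/R')}_{s}$.

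The main obstacle I anticipate is the compatibility bookkeeping surrounding the radicial covering $D^{(R/R')^{1/p}}$ when $p=2$ and some $(n_i,n_i')=(2,1)$: one needs that the global map (\ref{tildphi}) truly restricts to the local $\tilde\phi'^{(2)}$ of Proposition \ref{propnrar} (ii) at the generic point $\mathfrak{p}_i$, so that the injectivity proved stalk-by-stalk actually propagates through the inclusion $\gr_{R/R'}j_{*}\Omega^{1}_{U}\hookrightarrow\gr_{R/R'}j_{*}\Omega^{1}_{U}\otimes_{\dvr_{D^{(R/R')}}}\dvr_{D^{(R/R')^{1/p}}}$. This is ultimately a local check which follows from the construction of $\tilde\varphi'^{(R/R')}_{s}$ by the explicit formula in Proposition \ref{propnrar} (i).
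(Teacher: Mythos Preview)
Your proposal is correct and follows essentially the same route as the paper's proof: first establish the single-step injection $\phi'^{(R,i)}_{s}$ via Lemma \ref{lemnlogex} (identifying $\ker\delta'^{(R,i)}_{s}$ with the image of $\overline{F-1}$, hence with $\ker\tilde\varphi'^{(R,i)}_{s}$), then deduce (i) by Lemma \ref{lemshffil} exactly as in Proposition \ref{proplogcd} (i), and finally obtain the general case of (ii) by the stalk argument of Proposition \ref{proplogcd} (ii) with the radicial covering carried along. The paper is terser---it dispatches (ii) in one sentence by pointing to the stalk identification at the generic point of $D_{i}^{1/p}$---but your extra bookkeeping around the exceptional case $(p,n_i,n_i')=(2,2,1)$ is a reasonable elaboration of what the paper leaves implicit.
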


\begin{proof}
Let $i$ be an element of $I$ such that $n_{i}\ge 2$.
By Lemma \ref{lemnlogex}, the kernel of $\delta_{s}'^{(R,i)}$ is equal to the kernel of 
$\tilde{\varphi}_{s}'^{(R,i)}$.
Since $\delta_{s}'^{(R,i)}$ is surjective,
there exists a unique injection $\phi'^{(R,i)}_{s}\colon \gr_{R,i}R^{1}(\epsilon\circ j)_{*}\BZ/p^{s}\BZ\rightarrow \gr_{R,i}j_{*}\Omega^{1}_{U}\otimes_{\dvr_{D_{i}}}\dvr_{D_{i}^{1/p}}$ such that the 
diagram (\ref{cdcfnlsh}) for $R'=R-D_{i}$ is commutative.

(i) Let $i$ be an element of $I$ such that $n_{i}\ge 2$. 
We consider the commutative diagram
\begin{equation}
\xymatrix{
\gr_{R,i}R^{1}(\epsilon\circ j)_{*}\BZ/p^{s}\BZ \ar[r] \ar[d]_-{\phi_{s}'^{(R,i)}} &
j_{i*}\grlog_{n_{i}}H^{1}(K_{i},\BQ/\BZ) \ar[d]^-{\phi'^{(n_{i})}} \\
\gr_{R,i}j_{*}\Omega_{U}^{1}\otimes_{\dvr_{D_{i}}}\dvr_{D_{i}^{1/p}} \ar[r] &
j_{i*}(\gr_{n_{i}}\Omega_{K_{i}}^{1}\otimes_{F_{K_{i}}}F_{K_{i}}^{1/p}),  
}\notag
\end{equation}
where $F_{K_{i}}$ is the residue field of $K_{i}$,
the lower horizontal arrow is the inclusion,
and $\phi'^{(n_{i})}$ is as in (\ref{phipgen}).
Since the left vertical arrow is injective as proved above, 
the upper horizontal arrow is injective.
Hence the assertion follows by Lemma \ref{lemshffil}
similarly as the proof of Proposition \ref{proplogcd} (i).

(ii) Let $J$ be the subset of $I$ consisting of $i\in I$ such that $n_{i}'\neq n_{i}$.
Since $\gr_{n_{i}}j_{i*}\Omega_{K_{i}}^{1}\otimes_{F_{K_{i}}}F_{K_{i}}^{1/p}$ is
the stalk of $\gr_{R/R'}j_{*}\Omega^{1}_{U}\otimes_{\dvr_{D^{(R/R')}}}\dvr_{D^{(R/R')1/p}}$ 
at the generic point of $D_{i}^{1/p}$ for $i\in J$,
the assertion follows similarly as the proof of Proposition \ref{proplogcd} (ii).
\end{proof}

\begin{defn}
\label{defdtdiv}
Let $\chi$ be an element of $H^{1}_{\et}(U,\BQ/\BZ)$.
We define the \textit{total dimension divisor} $R'_{\chi}$ of $\chi$
by $R'_{\chi}=\sum_{i\in I}\dt(\chi|_{K_{i}})D_{i}$.
\end{defn}

We note that we have $\Supp (R'_{\chi}-D)=\Supp (R_{\chi})$ by (\ref{filzwfilpow}).

\begin{defn}
Let $\chi$ be an element of $H^{1}_{\et}(U,\BQ/\BZ)$.
Assume that $\dt(\chi|_{K_{i}})>1$ for some $i\in I$.
Let $p^{s}$ be the order of the $p$-part of $\chi$.
We put $Z=\Supp (R_{\chi}'-D)$.
We define the \textit{characteristic form} $\cform(\chi)$ of $\chi$
to be the image of the $p$-part of $\chi$ by the composition
\begin{align}
\Gamma(X,\fil_{R_{\chi}}R^{1}&(\epsilon\circ j)_{*}\BZ/p^{s}\BZ)\rightarrow \Gamma(X,\gr_{R'_{\chi}/
(R'_{\chi}-Z)}R^{1}(\epsilon\circ j)_{*}\BZ/p^{s}\BZ) \notag \\
&\xrightarrow{\phi'^{(R'_{\chi}/(R'_{\chi}-Z))}_{s}(X)}
\Gamma(X,\gr_{R'_{\chi}/(R'_{\chi}-Z)}j_{*}\Omega_{U}^{1}\otimes_{\dvr_{Z}}\dvr_{Z^{1/p}})
=\Gamma(Z^{1/p},\Omega^{1}(R'_{\chi})\otimes_{\dvr_{X}}\dvr_{Z^{1/p}}). \notag
\end{align}
\end{defn}

\section{Abbes-Saito's ramification theory and Witt vectors}
\label{secasram}
\subsection{Abbes-Saito's ramification theory}
\label{ssasram}
We briefly recall Abbes-Saito's non-logarithmic ramification theory
(\cite[Section 2, Subsection 3.1]{sa1}).

\begin{defn}[{\cite[Definition 1.12]{sa1}}]
Let $P$ be a scheme.
Let $D$ be a Cartier divisor on $P$ and $X$ a closed subscheme of $P$.
We define the \textit{dilatation} $P^{(D\cdot X)}$ of $P$ with respect to $(D,X)$
to be the complement of the proper transform of $D$ in the blow-up of $X$ along $D\cap X$.
\end{defn}

Let $X$ be a smooth separated scheme over a perfect field $k$ of characteristic $p>0$.
Let $D$ be a divisor on $X$ with simple normal crossings and 
$\{D_{i}\}_{i\in I}$ the irreducible components of $D$.
We put $U=X-D$.
Let $R=\sum_{i\in I}r_{i}D_{i}$ be a linear combination of integral coefficients
$r_{i}\ge 1$ for every $i\in I$.
Let $Z$ be the support of $R-D$.

We put $P=X\times_{k}X$.
Let $\Delta\colon X\rightarrow P$ be the diagonal and $\pr_{i}\colon P\rightarrow X$
the $i$-th projection for $i=1,2$.
We identify $D\subset X$ with closed subschemes of $P$ by the diagonal.
We put $P^{(D)}=\bigcap_{i=1}^{2}P^{(\pr_{i}^{*}D\cdot X)}$, where the intersection is taken in
the blow-up of $P$ along $D\subset P$.

Let $D^{(D)}_{i}$ be the inverse image of $D_{i}$ by $P^{(D)}\rightarrow P$.
Then $D^{(D)}=\sum_{i\in I}D_{i}^{(D)}$ is a divisor on $P^{(D)}$ with simple normal crossings.
The diagonal $\Delta$ is canonically lifted to the closed immersion $X\rightarrow P^{(D)}$
and we identify $X$ with a closed subscheme of $P^{(D)}$ by the lift.
We define $P^{(R)}$ to be the dilatation of $P^{(D)}$ 
with respect to $(\sum_{i\in I}(r_{i}-1)D^{(D)}_{i},X)$.
Let $T^{(R)}\subset D^{(R)}$ be the inverse image of $Z\subset D$ by $P^{(R)}\rightarrow P$.
Then the complement $P^{(R)}-D^{(R)}$ is $U\times_{k}U$ (\cite[Lemma 2.4.4]{sa1})
and $T^{(R)}$ is $TX(-R)\times _{X}Z$ (\cite[Corollary 2.9]{sa1}), 
where $TX=\Spec S^{\bullet}\Omega_{X}^{1}$ denotes the tangent bundle of $X$.

Let $G$ be a finite group and $V\rightarrow U$ a $G$-torsor.
We consider the open immersion $U\times_{k}U=P^{(R)}-D^{(R)}\rightarrow P^{(R)}$.
The quotient $(V\times_{k}V)/\Delta G$ of $V\times_{k}V$ by the diagonal action of $G$
is finite \'{e}tale over $U\times_{k}U$.
Let $Q^{(R)}$ be the normalization of $P^{(R)}$ in the finite \'{e}tale covering
$(V\times_{k}V)/\Delta G\rightarrow U\times_{k}U$.
Then the canonical lift $X\rightarrow P^{(R)}$ of the diagonal is canonically lifted to
$X\rightarrow Q^{(R)}$.

\begin{defn}[{\cite[Definition 2.12]{sa1}}]
Let $V\rightarrow U$ be a $G$-torsor for a finite group $G$
and $R=\sum_{i\in I}r_{i}D_{i}$ a linear combination of integral coefficients
$r_{i}\ge 1$ for every $i\in I$.
\begin{enumerate}
\item We say that the ramification of $V$ over $U$ at a point $x$ on $D$ is \textit{bounded by} $R+$
if the finite morphism $Q^{(R)}\rightarrow P^{(R)}$ is \'{e}tale on a neighborhood of
the image of $x$ by the lift $X\rightarrow Q^{(R)}$.
\item We say that the ramification of $V$ over $U$ along $D$ is \textit{bounded by} $R+$
if the finite morphism $Q^{(R)}\rightarrow P^{(R)}$ is \'{e}tale on a neighborhood of
the image of the lift $X\rightarrow Q^{(R)}$.
\end{enumerate}
\end{defn}

\begin{lem}
\label{lempurity}
Let $V\rightarrow U$ be a $G$-torsor for a finite group $G$
and $R=\sum_{i\in I}r_{i}D_{i}$ a linear combination of integral coefficients
$r_{i}\ge 1$ for every $i\in I$.
Let $\mathfrak{p}_{i}$ be the generic point of $D_{i}$ for $i\in I$.
Then the following are equivalent:
\begin{enumerate}
\item The ramification of $V$ over $U$ at $\mathfrak{p}_{i}$
is bounded by $R+$ for every $i\in I$.
\item The ramification of $V$ over $U$ along $D$ is bounded by $R+$.
\end{enumerate}
\end{lem}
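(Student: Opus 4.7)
The equivalence is a purity-type statement, and the plan is to deduce the nontrivial direction (i) $\Rightarrow$ (ii) from Zariski--Nagata purity of the branch locus. The other direction (ii) $\Rightarrow$ (i) is immediate: if $Q^{(R)}\rightarrow P^{(R)}$ is étale on a neighborhood of the image of the lift $X\rightarrow Q^{(R)}$, then, since each $\mathfrak{p}_i$ maps into that image, in particular the morphism is étale on a neighborhood of each image of $\mathfrak{p}_i$.

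For (i) $\Rightarrow$ (ii), I would first record that $P^{(R)}$ is smooth (hence regular) over $k$, by the inductive dilatation construction starting from the smooth $X\times_{k}X$ along smooth subschemes transversal to the irreducible components of $D^{(D)}$. The finite morphism $f\colon Q^{(R)}\rightarrow P^{(R)}$ is étale on $P^{(R)}-D^{(R)}=U\times_{k}U$ because $V\rightarrow U$ is a $G$-torsor, and $Q^{(R)}$ is normal by construction. Applying Zariski--Nagata purity of the branch locus to $f$, the branch locus $B\subset P^{(R)}$ is either empty or pure of codimension one. Since $B\subset D^{(R)}$, it must be a union of irreducible components of $D^{(R)}$, each of which is some $D_{i}^{(R)}$ (which one verifies locally from the structure of the dilatation and the simple normal crossings hypothesis on $D$). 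Now the lift $\tilde{\mathfrak{p}}_{i}\in P^{(R)}$ of $\mathfrak{p}_{i}$ lies in $D_{i}^{(R)}$, and hypothesis (i) is precisely the statement that $\tilde{\mathfrak{p}}_{i}\notin B$ for every $i$. Hence no $D_{i}^{(R)}$ is a component of $B$, so $B=\emptyset$ and $f$ is étale globally; in particular, étale on a neighborhood of the image of $X\rightarrow Q^{(R)}$, which is (ii).

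The main obstacle is the geometric input needed before purity can be invoked, namely the regularity of $P^{(R)}$ and the irreducibility of each $D_{i}^{(R)}$ (equivalently, the identification of the irreducible components of $D^{(R)}$). Both should follow from the explicit local description of the dilatation in coordinates adapted to the simple normal crossings divisor $D$, but they constitute the technical content that must be checked; once they are available, the purity argument above is straightforward.
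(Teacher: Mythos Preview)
Your approach via Zariski--Nagata purity is exactly the paper's; its proof is a single sentence invoking purity after noting that $Q^{(R)}\to P^{(R)}$ is \'etale over $U\times_{k}U$. One point worth tightening: hypothesis (i) literally says $f$ is \'etale at the lift $q_{i}\in Q^{(R)}$, not directly that $\tilde{\mathfrak{p}}_{i}\notin B$; to bridge this, observe that $G$ acts on $Q^{(R)}$ with quotient $P^{(R)}$ (since $P^{(R)}$ is normal and $f$ is generically a $G$-torsor), hence transitively on each fiber, so \'etale at $q_{i}$ forces \'etale along the whole fiber over $\tilde{\mathfrak{p}}_{i}$. With that in place your argument goes through, and your identification of the geometric inputs needed (regularity of $P^{(R)}$ and irreducibility of the $D_{i}^{(R)}$) is on target.
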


\begin{proof}
Since $Q^{(R)}\rightarrow P^{(R)}$ is an isomorphism outside of the inverse image of $D$,
the assertion follows by the purity of Zariski-Nagata.
\end{proof}

In \cite{sa1}, the notion of the bound of ramification of $V$ over $U$ is defined for $R=\sum_{i\in I}r_{i}D_{i}$
of rational coefficients $r_{i}\ge 1$.
The next proposition relates the ramification of $G$-torsor to the ramification of local field.

\begin{prop}[{\cite[Proposition 2.27]{sa1}}]
\label{asrameq}
Assume that $D$ is irreducible.
Let $K$ be the local field at the generic point $\mathfrak{p}$ of $D$.
Let $\{G_{K}^{r}\}_{r\in \mathbf{Q}_{>0}}$ be the ramification filtration of
the absolute Galois group $G_{K}$ of $K$ (\cite[Definition 3.4]{as1}).
Let $r\ge 1$ be a rational number and
let $G_{K}^{r+}=\displaystyle{\overline{\bigcup_{s>r}G_{K}^{s}}}$ 
denote the closure of the union of $G_{K}^{s}$ for $s>r$.
For a $G$-torsor $V\rightarrow U$ for a finite group $G$, the following are equivalent:
\begin{enumerate}
\item The ramification of $V$ over $U$ at $\mathfrak{p}$ is bounded by $rD+$.
\item $G_{K}^{r+}$ acts trivially on the finite \'{e}tale $K$-algebra 
$L=\Gamma(V\times_{U}K,\dvr_{V\times_{U}K})$.
\end{enumerate}
\end{prop}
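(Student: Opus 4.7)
The plan is to reduce the statement to the local situation at the generic point $\mathfrak{p}$ of $D$, where the proposition becomes the translation between two descriptions of the same filtration. First I would observe that whether $Q^{(R)} \to P^{(R)}$ is \'{e}tale on a neighborhood of the image of $\mathfrak{p}$ under the canonical lift depends only on the local rings at $\mathfrak{p}$ and at its image in $Q^{(R)}$. So I can shrink $X$ around $\mathfrak{p}$ and, after that, replace $X$ by $\Spec \dvr_{X,\mathfrak{p}}$; its henselization (and completion) recovers the valuation ring $\dvr_{K}$ of $K$, with $D$ becoming the closed point. The $G$-torsor $V \to U$ pulls back to the finite \'{e}tale $K$-algebra $L = \Gamma(V \times_{U} K, \dvr_{V \times_{U} K})$ together with its $G$-action, and the normalization construction $Q^{(R)}$ localizes to the normalization in this $K$-algebra.

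Next I would identify the dilatation $P^{(R)}$, after localization at $\mathfrak{p}$, with the geometric object used to define Abbes-Saito's filtration $\{G_{K}^{r}\}$. By \cite[Definition 3.4]{as1}, the filtration is defined in terms of the geometric connected components of the generic fibre of a standard tubular neighborhood of the diagonal in $\Spec (\dvr_{K} \hat\otimes \dvr_{K})$; in the smooth equal-characteristic setting this tubular neighborhood is precisely the stalk at $\mathfrak{p}$ of $P^{(R)} \to \xx$. Under this identification, $Q^{(R)} \to P^{(R)}$ corresponds to the finite covering of the tubular neighborhood obtained from the $G$-torsor on the generic fibre.

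I would then translate \'{e}taleness of $Q^{(R)} \to P^{(R)}$ along the lift $X \to Q^{(R)}$ into triviality of the $G_{K}^{r+}$-action on $L$. \'{E}taleness at the image of $\mathfrak{p}$ means the torsor splits, after the appropriate base change, over the tubular neighborhood of $\Delta$, which by the definition of $G_{K}^{r+} = \overline{\bigcup_{s>r} G_{K}^{s}}$ as a limit over $s > r$ is equivalent to $G_{K}^{r+}$ acting trivially on $L$. This equivalence is precisely the content of \cite[Proposition 2.27]{sa1}, which the statement quotes, so the proof in the paper would amount to citing this comparison.

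The main technical obstacle is the identification in the second step: one must verify that the globally defined dilatation $P^{(R)}$ for the smooth pair $(X,D)$ indeed reduces, at the stalk at $\mathfrak{p}$, to the tube used by Abbes-Saito over $\dvr_{K}$, and that the normalization $Q^{(R)}$ is compatible with the normalization appearing in the definition of $G_{K}^{r}$. Once this compatibility is in hand, which is precisely what Saito establishes in \cite[Section 2]{sa1}, the equivalence of (i) and (ii) is formal, since both conditions express that the torsor becomes trivial on the same tubular neighborhood of the diagonal.
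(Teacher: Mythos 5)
Your proposal is consistent with the paper: the paper gives no independent proof of this proposition but simply quotes it as \cite[Proposition 2.27]{sa1}, exactly as you conclude, and your sketch of the local reduction and the identification of the dilatation $P^{(R)}$ with the construction underlying Abbes--Saito's filtration is a fair outline of what Saito establishes there. So you take essentially the same route as the paper, namely deferring the substance to \cite{sa1}.
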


We note that the filtration $\{G_{K}^{r}\}_{r\in \BQ_{>0}}$ is decreasing.

We recall the characteristic form defined in \cite[Subsection 2.4]{sa1}.
Let $W^{(R)}$ be the largest open subscheme of $Q^{(R)}$ \'{e}tale over $P^{(R)}$.
We define a scheme $E^{(R)}$ over $T^{(R)}$ to be 
the fiber product $T^{(R)}\times_{P^{(R)}}W^{(R)}$.
Then there is a unique open sub group scheme $E^{(R)0}$ of a smooth group scheme $E^{(R)}$
over $Z$ such that for every $x\in Z$ the fiber $E^{(R)0}\times_{Z}x$ is 
the connected component of $E^{(R)}\times_{Z}x$ containing the unit section
(\cite[Proposition 2.16]{sa1}). 
Further $E^{(R)0}$ is \'{e}tale over $T^{(R)}$. 

Assume that the ramification of $V$ over $U$ along $D$ is bounded by $R+$.
Then, we say that the ramification of $V$ over $U$ along $D$ is \textit{non-degenerate}
at the multiplicity $R$
if the \'{e}tale morphism $E^{(R)0}\rightarrow T^{(R)}$ is finite.
We note that this condition is satisfied 
if we remove a sufficiently large closed subscheme of $X$ of codimension $\ge 2$.
Assume that the ramification of $V$ over $U$ along $D$ is non-degenerate
at the multiplicity $R$.
Then the exact sequence $0\rightarrow \tilde{G}^{(R)}\rightarrow E^{(R)0}
\rightarrow T^{(R)}\rightarrow 0$ defines
a closed immersion $\tilde{G}^{(R) \vee} \rightarrow T^{(R) \vee}$ of commutative group schemes
to the dual vector bundle defined over $Z^{1/p^{n}}$, where $n\ge 0$ is an integer.

\begin{defn}[{\cite[Definition 2.19]{sa1}}]
Let $V\rightarrow U$ be a $G$-torsor for a finite group $G$.
Assume that the ramification of $V$ over $U$ along $D$ is bounded by $R+$
and non-degenerate at the multiplicity $R$.
We define the \textit{characteristic form} $\Char_{R}(V/U)$ to be the morphism 
$\tilde{G}^{(R) \vee} \rightarrow T^{(R) \vee}=(T^{*}X\times_{X}Z)(R)$ over $Z^{1/p^{n}}$
for a sufficiently large integer $n\ge 0$.
\end{defn}

\begin{prop}[{cf.\ \cite[Corollary 2.28.2]{sa1}}]
\label{propkcf}
Let the notation be as in Proposition \ref{asrameq}.
Let $\dvr_{K}$ be the valuation ring of $K$ and 
$F_{K}$ the residue field of $K$.
We put $N^{(r)}=\mathfrak{m}_{\bar{K}}^{r}/\mathfrak{m}_{\bar{K}}^{r+}$,
where $\mathfrak{m}_{\bar{K}}^{r}=\{a\in \bar{K}\; |\;\ord_{K}(a)\ge r\}$
and $\mathfrak{m}_{\bar{K}}^{r+}=\{a\in \bar{K}\; |\;\ord_{K}(a)> r\}$.
Let $r\ge 1$ be a rational number.
Assume that the ramification of $V$ over $U$ along $D$ is bounded by $R+$
and non-degenerate at the multiplicity $rD$.
Then the following are equivalent:
\begin{enumerate}
\item The characteristic form $\Char_{rD}(V/U)$ defines the non-zero mapping
by taking the stalk at the generic point of $D$.
\item $G_{K}^{r+}$ acts non-trivially on $L$.
\end{enumerate}
\end{prop}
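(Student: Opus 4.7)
The plan is to deduce the equivalence from Proposition~\ref{asrameq} by unraveling, at the stalk at $\mathfrak{p}$, the definition of the characteristic form $\Char_{rD}(V/U)$ in terms of the étale locus $W^{(rD)}\subset Q^{(rD)}$. First, I would translate condition (ii) geometrically via Proposition~\ref{asrameq}: the action of $G_K^{r+}$ on $L$ is non-trivial if and only if the ramification of $V$ over $U$ at $\mathfrak{p}$ is not bounded by $rD+$, that is, the finite morphism $Q^{(rD)}\to P^{(rD)}$ fails to be étale on any neighborhood of the image of $\mathfrak{p}$ by the canonical lift $X\to Q^{(rD)}$.

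Next, I would reinterpret condition (i) in the same language. By definition, $E^{(rD)}=T^{(rD)}\times_{P^{(rD)}}W^{(rD)}$, and the non-degeneracy assumption forces $E^{(rD)0}\to T^{(rD)}$ to be finite étale; the characteristic form is the closed immersion $\tilde G^{(rD)\vee}\hookrightarrow T^{(rD)\vee}$ dual to the resulting surjection $E^{(rD)0}\to T^{(rD)}$ with kernel $\tilde G^{(rD)}$. Thus $\Char_{rD}(V/U)$ vanishes at the stalk at $\mathfrak{p}$ if and only if $\tilde G^{(rD)}$ is trivial at $\mathfrak{p}$, i.e.\ $E^{(rD)0}\to T^{(rD)}$ is an isomorphism over an open neighborhood of $\mathfrak{p}$. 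The crucial geometric identification is that the canonical lift $X\to Q^{(rD)}$ corresponds, under the inclusion $T^{(rD)}\subset P^{(rD)}$, to the unit section of $E^{(rD)}$; hence $E^{(rD)0}=T^{(rD)}$ near $\mathfrak{p}$ exactly when the unit section lies in the étale locus $W^{(rD)}$, equivalently $Q^{(rD)}\to P^{(rD)}$ is étale at the image of $\mathfrak{p}$ under the lift.

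Combining these steps produces the chain of equivalences: $\Char_{rD}(V/U)$ is zero at the stalk at $\mathfrak{p}$ if and only if the ramification of $V/U$ at $\mathfrak{p}$ is bounded by $rD+$, if and only if $G_K^{r+}$ acts trivially on $L$; the contrapositive is the stated proposition. The main obstacle is the middle implication, where I must upgrade pointwise vanishing of $\tilde G^{(rD)\vee}$ at the stalk $\mathfrak{p}$ to étaleness of $Q^{(rD)}\to P^{(rD)}$ on a genuine neighborhood of the lift. This requires the finiteness of $E^{(rD)0}$ over $T^{(rD)}$ supplied by the non-degeneracy hypothesis, together with a spreading-out argument around the generic point of $D$ exploiting that the unit component stays connected upon localization and that a finite étale cover trivial at a single point is trivial on an open neighborhood.
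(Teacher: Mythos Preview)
The paper's proof is a single-line citation to \cite[Corollary 2.28.2]{sa1} and its proof, so there is no in-paper argument to compare yours against. Your attempt to reconstruct that argument, however, contains a genuine gap at what you call ``the crucial geometric identification''.

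You claim that $E^{(rD)0}\to T^{(rD)}$ is an isomorphism near $\mathfrak{p}$ exactly when the unit section lies in the \'etale locus $W^{(rD)}$, i.e.\ when $Q^{(rD)}\to P^{(rD)}$ is \'etale at the image of $\mathfrak{p}$ by the lift. But the right-hand condition is literally the hypothesis ``ramification bounded by $rD+$'': it is what places the lift inside $W^{(rD)}$ and thereby defines the unit section of $E^{(rD)}$ in the first place. Your biconditional would therefore force the characteristic form to vanish whenever it is defined, contradicting Proposition~\ref{propeqcf}~(ii) and Corollary~\ref{corcf}~(i). The mistake is that a section over the zero locus $Z\subset T^{(rD)}$ does not trivialize a connected finite \'etale cover of $T^{(rD)}$: the fibres $T^{(rD)}_{x}$ are affine spaces in characteristic $p$ and admit nontrivial connected \'etale (Artin--Schreier) covers, and the characteristic form is exactly the invariant recording that cover. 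What \cite[Corollary~2.28.2]{sa1} actually does is identify $E^{(rD)0}$ at the stalk at $\mathfrak{p}$ with a torsor controlled by the image of the graded piece $G_K^{r}/G_K^{r+}$ in its action on $L$; this uses Saito's structural analysis of the dilatation and cannot be extracted from Proposition~\ref{asrameq} alone. Note incidentally that, read with $R=rD$, the hypothesis of the proposition together with Proposition~\ref{asrameq} already forces $G_K^{r+}$ to act trivially; the substantive equivalence, as in \cite{sa1} and as used in the proof of Corollary~\ref{corcf}~(ii), is with the action of $G_K^{r}$.
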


\begin{proof}
The assertion follows by \cite[Corollary 2.28.2]{sa1} and its proof.
\end{proof}

\subsection{Valuation of Witt vectors}

We keep the notation in Subsection \ref{ssasram}.
In this subsection, we assume that $X$ is an smooth affine scheme $\Spec A$ over $k$
and that $D$ is an irreducible divisor  defined by $\pi \in A$. 
We put $U=\Spec B$ and $R=rD$, where $r\ge 1$ is an integer.

Let $J\subset A$ be the kernel of the multiplication $A\otimes_{k}A\rightarrow A$.
Following the construction of $P^{(R)}$ recalled in the previous section, we have
\begin{equation}
P^{(R)}=\Spec (A\otimes_{k}A)[J/(\pi^{r}\otimes 1),
((1\otimes \pi)/(\pi\otimes 1))^{-1}]. \notag
\end{equation}
The divisor $D^{(R)}$ is defined by $t_{1}\otimes 1$. 

We put $P^{(R)}=\Spec A^{(r)}$. 
Let $\hat{A}$ denote the completion of the local ring $\dvr_{X,\mathfrak{p}}$ at the generic point 
$\mathfrak{p}$ of $D$
and $\hat{A}^{(r)}$ the completion of the local ring $\dvr_{P^{(R)},\mathfrak{q}}$
at the generic point $\mathfrak{q}$ of $D^{(R)}$ respectively.
Let $u\colon \hat{A}\rightarrow \hat{A}^{(r)}$ and
$v\colon \hat{A}\rightarrow \hat{A}^{(r)}$ be the morphisms
induced by the first and second projections $P\rightarrow X$ respectively. 
We put $K=\Frac \hat{A}$ and $L^{(r)}=\Frac \hat{A}^{(r)}$.

\begin{lem}
\label{lemvalvua}
Let $F_{K}$ be the residue field of $K$.
Let $a=a'\pi^{n}\in K$ be an element, where $n$ is an integer and $a'\in \hat{A}^{\times}$ is a unit.
Let $r\ge 1$ be an integer.
\begin{enumerate}
\item If $n=0$ and if $r=1$, then we have $\ord_{L^{(r)}}(v(a)/u(a))=0$.
\item If $n\notin p\BZ$ or $r=1$, then $\ord_{L^{(r)}}(v(a)/u(a)-1)=r-1$.
\item If $n\in p\BZ$ and if $r>1$, then $\ord_{L^{(r)}}(v(a)/u(a)-1)\ge r$.
Further if $a'$ is not a $p$-power in $F_{K}$, the equality holds.
\end{enumerate}
\end{lem}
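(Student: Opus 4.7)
I would begin by making the local structure of $\hat{A}^{(r)}$ explicit. Setting $\pi_{1} = u(\pi)$ and $\pi_{2} = v(\pi)$, the explicit description of $P^{(R)}$ shows that $\pi_{1}$ is a uniformizer of $\hat{A}^{(r)}$ and $u_{\pi} := (\pi_{2} - \pi_{1})/\pi_{1}^{r}$ is one of the free coordinates added by dividing the diagonal ideal $J$ by $\pi_{1}^{r}$; since $D^{(R)} = (\pi_{1} = 0)$ is generically an affine bundle over $D$ with $u_{\pi}$ among its fibre coordinates, $u_{\pi}$ is a unit in $\hat{A}^{(r)}$, so $\pi_{2}/\pi_{1} = 1 + u_{\pi}\pi_{1}^{r-1}$. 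I would then introduce $\delta\colon \hat{A} \to \hat{A}^{(r)}$ by $\delta(a) = (v(a) - u(a))/\pi_{1}^{r}$, well-defined because $v(a) - u(a) = 1\otimes a - a\otimes 1 \in J \subseteq \pi_{1}^{r}\hat{A}^{(r)}$. For $a = a'\pi^{n}$ this gives
\[
\frac{v(a)}{u(a)} \;=\; \Bigl(1 + \frac{\pi_{1}^{r}\delta(a')}{u(a')}\Bigr)\bigl(1 + u_{\pi}\pi_{1}^{r-1}\bigr)^{n},
\]
and part (i) is immediate: for $n = 0$, $r = 1$, both factors are units, so the quotient has order $0$.

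For parts (ii) and (iii) I would expand $(1 + u_{\pi}\pi_{1}^{r-1})^{n}$ using the characteristic-$p$ identity $(1+x)^{p^{s}m} = (1 + x^{p^{s}})^{m}$; writing $n = p^{s}m$ with $\gcd(m,p) = 1$,
\[
(1 + u_{\pi}\pi_{1}^{r-1})^{n} - 1 \;=\; m\,u_{\pi}^{p^{s}}\pi_{1}^{p^{s}(r-1)} + O(\pi_{1}^{2p^{s}(r-1)}).
\]
Under the hypothesis of (ii), either $s = 0$, so the leading order is $r-1$, or $r = 1$ and the displayed expression is a nonzero polynomial in the unit $u_{\pi}$ of exact order $0 = r-1$; combined with the correction $\pi_{1}^{r}\delta(a')/u(a')$ of order $\geq r$, this gives $\ord(v(a)/u(a) - 1) = r - 1$. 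Under the hypothesis of (iii) we have $s \geq 1$ and $r \geq 2$, hence $p^{s}(r-1) \geq 2(r-1) \geq r$, so both summands of $v(a)/u(a) - 1$ have order $\geq r$, proving the inequality.

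The equality clause in (iii) is the main obstacle and requires analysing $\delta(a') \bmod \pi_{1}$. Choose local parameters $\pi, y_{1}, \ldots, y_{d-1}$ of $X$ along $D$ and a coefficient-field lift in $\hat{A}$, writing $a' = c_{0} + c_{1}\pi + \cdots$ with $c_{i}\in F_{K}$; set $v_{y_{j}} := (y_{j}\otimes 1 - 1\otimes y_{j})/\pi_{1}^{r}$, which are transcendental generators of the residue field of $\hat{A}^{(r)}$. A formal Taylor expansion of $v(a') = \sum_{i} c_{i}(y^{(2)})\pi_{2}^{i}$ using $y_{j}^{(2)} = y_{j}^{(1)} + v_{y_{j}}\pi_{1}^{r}$ and $\pi_{2} = \pi_{1}(1 + u_{\pi}\pi_{1}^{r-1})$ yields, for $r \geq 2$,
\[
\delta(a') \;\equiv\; \sum_{j} v_{y_{j}}\,\frac{\partial c_{0}}{\partial y_{j}} + c_{1}u_{\pi} \pmod{\pi_{1}}.
\]
Since $k$ is perfect, $\bar{a}' = c_{0} \notin F_{K}^{p}$ is equivalent to $dc_{0} \neq 0$ in $\Omega^{1}_{F_{K}/k}$, i.e., to some $\partial c_{0}/\partial y_{j} \neq 0$, which makes $\delta(a')$ a unit. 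When $p^{s}(r-1) > r$ the term $\pi_{1}^{r}\delta(a')/u(a')$ of exact order $r$ then dominates, giving equality. In the unique borderline case $(p,s,r) = (2,1,2)$, the two competing $\pi_{1}^{r}$-coefficients $\delta(a')/u(a')$ and $m u_{\pi}^{2}$ lie in different monomials of the residue field (the first depends nontrivially on some $v_{y_{j}}$, the second does not), so they cannot cancel, and equality $\ord(v(a)/u(a) - 1) = r$ still holds.
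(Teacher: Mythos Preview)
Your proof is correct and follows essentially the same approach as the paper: both introduce the variables $w=(v(\pi)-u(\pi))/u(\pi)^{r}$ and $w'=(v(a')-u(a'))/u(\pi)^{r}$ (your $u_{\pi}$ and $\delta(a')$), factor $v(a)/u(a)$ as a product, expand $(1+u(\pi)^{r-1}w)^{n}$ writing $n=p^{s}m$, and isolate the borderline case $(p,r,s)=(2,2,1)$. For the equality clause in (iii) the paper invokes the identification $T^{(R)}=TX(-R)\times_{X}D$ to conclude that the residues of $w$ and $w'$ are part of a basis of $\pi^{-r}\Omega^{1}_{A}\otimes_{A}F_{K}$, whereas you reach the same conclusion by an explicit Taylor expansion in local parameters; these are the same argument expressed abstractly versus in coordinates.
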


\begin{proof}
We put $w=(v(\pi)-u(\pi))/u(\pi)^{r}$ and $w'=(v(a')-u(a'))/u(\pi)^{r}$.
Then we have $v(\pi)/u(\pi)=1+u(\pi)^{r-1}w$ and $v(a')/u(a')=1+u(a')^{-1}u(\pi)^{r}w'$.
Hence we have 
\begin{equation}
\label{vauamo}
v(a)/u(a)-1=\begin{cases}
(1+u(a')^{-1}u(\pi)^{r}w')(1+u(\pi)^{r-1}w)^{n}-1 & (n\ge 0) \\
(1+u(\pi)^{r-1}w)^{n}\left(\left(1+u(a')^{-1}u(\pi)^{r}w' \right)-\left(1+u(\pi)^{r-1}w\right)^{-n}\right)
& (n<0).
\end{cases} 
\end{equation}

Suppose that $n=0$ and $r=1$.
Then we have $v(a)/u(a)=1+u(a')^{-1}u(\pi)w'$.
Since $u(\pi)=\pi \otimes 1$ is a uniformizer of $\hat{A}^{(r)}$,
the assertion (i) holds.

Suppose that $n\notin p\BZ$.
Then we have $\ord_{L^{(r)}}(v(a)/u(a)-1)=r-1$.

Assume that $n\in p\BZ$.
Suppose that $n= 0$.
Then we have $\ord_{L^{(r)}}(v(a)/u(a)-1)\ge r$,
and the equality holds if $w'$ is a unit in $\hat{A}^{(r)}$.

Suppose that $n\neq 0$.
We put $n=p^{s'}n'$, where $s'=\ord_{p}(n)\ge 1$.
Then we have $\ord_{L^{(r)}}(v(a)/u(a)-1)\ge \min\{r,p^{s'}(r-1)\}$.
If $r=1$, then we have $r>p^{s'}(r-1)=0=r-1$.
Since $w\in \hat{A}^{(r)\times}$ is a unit, the assertion follows if $r=1$.

If $r>1$,
then $p^{s'}(r-1)\ge r$.
Further the equality holds only if $(p,r,s')=(2,2,1)$.
Hence we have $\ord_{L^{(r)}}(v(a)/u(a)-1)\ge r$.
Further, if $(p,r,s')\neq(2,2,1)$ and 
if $w'$ is a unit in $\hat{A}^{(r)}$, the equality holds.
If $(p,r,s')=(2,2,1)$,
then we have $\ord_{L^{(r)}}(v(a)/u(a)-1)=r$ if and only if $u(a')^{-1}w'\neq n'w^{p}$.

Assume that $a$ is not a $p$-power in $F_{K}$.
Then the elements $\pi$ and $a'$ are $p$-independent over $K^{p}$.
Hence the images in $\hat{A}^{(r)}/u(\pi)\hat{A}^{(r)}$ of $w$ and $w'$
form a part of a basis of the $F_{K}$-vector space $\pi^{-r}\Omega_{A}^{1}\otimes_{A}F_{K}$,
since $T^{(R)}=TX(-R)\times_{X}D$.
Hence $w'$ is a unit in $\hat{A}^{(r)}$ and $u(a')^{-1}w'\neq n'w^{p}$.
Thus the assertions (ii) and (iii) follow.
%
\end{proof}

We put $\BZ[T,S]_{d}=\mathbf{Z}[T_{d}, \ldots, T_{s-1},S_{d},\ldots, S_{s-1}]$
for an integer $d$ such that $0\le d \le s-1$.
We define polynomials $Q_{d}(T,S)\in \BZ[T,S]_{d}[1/p]$ for $0\le d \le s-1$ inductively by the relation
\begin{equation}
\label{defofq}
\sum_{i=d}^{s-1}p^{s-1-i}(T_{i}(1+S_{i}))^{p^{i-d}}=\sum_{i=d}^{s-1}p^{s-1-i}T_{i}^{p^{i-d}}+\sum_{i=d}^{s-1}p^{s-1-i}Q_{i}^{p^{i-d}}. 
\end{equation}
It is well-known in the theory of Witt vectors that $Q_{d}$ is an element of $\BZ[T,S]_{d}$.

For elements $x=(x_{s-1}, \ldots x_{0})$ and $y=(y_{s-1}, \ldots, y_{0})$ of $W_{s}(A)$ for a ring $A$, we put $x^{\prime}=(x_{s-1}^{\prime},\ldots ,x_{0}^{\prime})$, where $x_{i}^{\prime}=x_{i}(1+y_{i})$ for $i=0, \ldots ,s-1$.
Then we have 
\begin{equation}
\label{xminusxp}
x-x^{\prime}=(Q_{s-1}(x,y),Q_{s-2}(x,y), \ldots, Q_{0}(x,y)). 
\end{equation} 

\begin{lem}[cf.\ {\cite[Lemma 12.2]{as3}}]
\label{aswitt}
Let the notation be as above.
\begin{enumerate}
\item $Q_{d}(T,S)$ belongs to the ideal of $\BZ[T,S]_{d}$ 
generated by $(S_{i})_{d\le i\le s-1}$ for $d=0,\ldots ,s-1$. 
\item $Q_{d}(T,S)-\sum_{i=d}^{s-1}T_{i}^{p^{i-d}}S_{i}$ belongs to the ideal 
of $\BZ[T,S]_{d}$ 
generated by $(S_{i}S_{j})_{d\le i,j\le s-1}$ for $d=0,\ldots ,s-1$.
\item If we replace $T_{i}$ by $T_{i}^{p^{s-1-i}}$ in $Q_{d}(T,S)$, the polynomial $Q_{d}(T,S)$ is
homogeneous of degree $p^{s-1-d}$ as a polynomial of multi-value $T$
for $0\le d \le s-1$. 
\end{enumerate}
\end{lem}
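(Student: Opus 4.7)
The plan is descending induction on $d$, reading everything directly off the defining recursion (\ref{defofq}). Solving (\ref{defofq}) for $Q_d$ by isolating the $i=d$ summand gives
\[
p^{s-1-d}Q_d = \sum_{i=d}^{s-1}p^{s-1-i}\bigl[(T_i(1+S_i))^{p^{i-d}}-T_i^{p^{i-d}}\bigr]-\sum_{i=d+1}^{s-1}p^{s-1-i}Q_i^{p^{i-d}},
\]
and the base case $d=s-1$ reads $Q_{s-1}=T_{s-1}S_{s-1}$, which satisfies (i)--(iii) trivially.

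For (i), the first sum on the right lies in $I=(S_d,\dots,S_{s-1})$ by inspection (setting every $S_j=0$ kills it), and the second lies in $I$ by the inductive hypothesis applied to each $Q_i$ with $i>d$. Since $\BZ[T,S]_d/I=\BZ[T_d,\dots,T_{s-1}]$ is $p$-torsion-free, the factor $p^{s-1-d}$ may be cancelled to conclude $Q_d\in I$. For (ii), I reduce modulo $J=(S_iS_j)_{d\le i,j\le s-1}$. The binomial expansion $(1+S_i)^{p^{i-d}}\equiv 1+p^{i-d}S_i\pmod{S_i^2}$ collapses the first sum modulo $J$ to $p^{s-1-d}\sum_{i=d}^{s-1}T_i^{p^{i-d}}S_i$. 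By (i) each $Q_i$ lies in $I$, so $Q_i^{p^{i-d}}\in J$ whenever $p^{i-d}\ge 2$, i.e.\ for every $i>d$; the second sum thus vanishes modulo $J$. Hence $p^{s-1-d}Q_d\equiv p^{s-1-d}\sum_{i=d}^{s-1}T_i^{p^{i-d}}S_i\pmod J$, and cancelling $p^{s-1-d}$---legitimate because $\BZ[T,S]_d/J$ is the free $\BZ$-module spanned by monomials of $S$-degree at most one---yields (ii).

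For (iii), after the substitution $T_i\mapsto T_i^{p^{s-1-i}}$, each factor $(T_i^{p^{s-1-i}}(1+S_i))^{p^{i-d}}$ and each $T_i^{p^{i-d}}\mapsto T_i^{p^{s-1-d}}$ becomes homogeneous of degree $p^{s-1-d}$ in $T$; by the inductive hypothesis each $Q_i$ after substitution is homogeneous of degree $p^{s-1-i}$, hence $Q_i^{p^{i-d}}$ is homogeneous of degree $p^{s-1-d}$. The identity then forces $p^{s-1-d}Q_d$ (after substitution) to be homogeneous of degree $p^{s-1-d}$, and cancellation of the scalar $p^{s-1-d}$ preserves homogeneity. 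Throughout, the one step demanding care is the repeated cancellation of $p^{s-1-d}$ in $\BZ[T,S]_d$; this is justified in each instance by the $\BZ$-flatness of the quotient modulo the relevant ideal, and modulo this bookkeeping the argument is a purely mechanical descending recursion.
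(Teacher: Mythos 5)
Your proof is correct and, for part (iii), it is essentially the paper's own argument: the same rearrangement of the defining relation (\ref{defofq}) isolating the $i=d$ term, the same base case $Q_{s-1}=T_{s-1}S_{s-1}$, and the same descending induction using that each substituted $Q_i^{p^{i-d}}$ is homogeneous of degree $p^{s-1-d}$. The only difference is that the paper simply cites \cite[Lemma 12.2]{as3} for (i) and (ii), whereas you reprove them from the recursion; your cancellations of $p^{s-1-d}$ are legitimate since $Q_d\in\BZ[T,S]_d$ (stated just before the lemma) and the quotients of $\BZ[T,S]_d$ by $(S_i)_i$ and by $(S_iS_j)_{i,j}$ are torsion-free $\BZ$-modules, exactly as you argue.
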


\begin{proof}
The assertions (i) and (ii) are the same as (i) and (ii) in \cite[Lemma 12.2]{as3} respectively.

We prove (iii) by the induction on $d$.
If $d=s-1$, we have $Q_{s-1}=T_{s-1}S_{s-1}$.
Hence the assertion follows.

If $d<s-1$, we have
\begin{equation}
Q_{d}=p^{d-s+1}\left( \sum_{i=d}^{s-1}p^{s-1-i}T_{i}^{p^{i-d}}\left( (1+S_{i})^{p^{i-d}}-1\right)
-\sum_{i=d+1}^{s-1}p^{s-1-i}Q_{i}^{p^{i-d}}\right). \notag
\end{equation}
By the induction hypothesis, the polynomial $Q_{i}(T,S)$ is homogeneous of degree $p^{s-1-i}$
for $T$ for $d+1\le i \le s-1$
with $T_{j}$ replaced by $T_{j}^{p^{s-1-j}}$ for $i\le j \le s-1$.
Hence $Q_{i}(T,S)^{p^{i-d}}$ is homogeneous of degree $p^{s-1-d}$ for $T$ for $d+1\le i\le s-1$ 
with the same replacement of $T_{j}$ for $i\le j \le s-1$.
Hence the assertion follows.
\end{proof}

\begin{lem}
\label{lemvalq}
Let $a=(a_{s-1},\ldots,a_{0})$ be an element of $W_{s}(K)$ and
put $b=(b_{s-1},\ldots,b_{0})\in W_{s}(L^{(r)})$, where
$b_{i}=v(a_{i})/u(a_{i})-1$ if $a_{i}\neq 0$ and $b_{i}=0$ if $a_{i}=0$ 
for $0\le i\le s-1$.
Let $m\ge 1$ be an integer and assume that $a\in \fil_{m}W_{s}(K)$.
Let $r\ge 1$ be an integer.
\begin{enumerate}
\item If $(m,r)=(1,1)$, then $p^{i}\ord_{L^{(r)}}(Q_{d}(u(a),b))\ge -m+1$ for every $0\le d\le s-1$.
\item If $r>1$, then
$p^{i}\ord_{L^{(r)}}(Q_{d}(u(a),b))>-m+r$ for every $0<d\le s-1$, 
and $\ord_{L^{(r)}}(Q_{0}(u(a),b))\ge -m+r$.
\end{enumerate}
\end{lem}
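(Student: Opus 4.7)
The plan is to bound the $L^{(r)}$-valuation of each monomial in $Q_d(u(a), b)$ and take the minimum. By Lemma~\ref{aswitt}, every monomial $T^\alpha S^\beta$ of $Q_d$ involves only indices $j \ge d$, satisfies $\sum_j \alpha_j p^{s-1-j} = p^{s-1-d}$ (equivalently $\sum_j \alpha_j p^{-j} = p^{-d}$), and has $\sum_j \beta_j \ge 1$. An induction on $d$ from the defining recursion~(\ref{defofq}) moreover yields the additional structural property that $\alpha_j > 0$ if and only if $\beta_j > 0$ for every $j$, since the leading monomials on the right-hand side pair $T_i$ with $S_i$ and this pairing is preserved under $p$-th powers of the lower $Q_i$.

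For part~(i), the choice $s' = \min\{\ord_p(1), s\} = 0$ collapses Definition~\ref{filpw} to $\fil_1 W_s(K) = \fillog_0 W_s(K)$, so $\ord_K(a_j) \ge 0$ for all $j$. Then $u(a_j)$ is integral on $\hat{A}^{(r)}$, and Lemma~\ref{lemvalvua}(i, ii) gives $\ord_{L^{(r)}}(b_j) \ge 0$, so every monomial of $Q_d(u(a), b)$ has non-negative valuation, yielding $p^d \ord_{L^{(r)}}(Q_d(u(a), b)) \ge 0 = -m + 1$ as required.

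For part~(ii), where $r > 1$, combining the naive bounds $\ord_K(a_j) \ge -m/p^j$ (from $a \in \fil_m \subset \fillog_m$) and $\ord_{L^{(r)}}(b_j) \ge r - 1$ (Lemma~\ref{lemvalvua}(ii)) with the weighted-degree relation gives only $p^d \ord_{L^{(r)}}(\text{monomial}) \ge -m + p^d(r - 1)$, which falls short of the target by one when $d = 0$. The missing $+1$ is produced using the decomposition $a = c + V^{s-s'}(d')$ with $c \in \fillog_{m-1} W_s(K)$ and $d' \in \fillog_m W_{s'}(K)$, via the following dichotomy for each index $j$ with $\beta_j > 0$ (equivalently $\alpha_j > 0$): either $\ord_K(a_j) > -m/p^j$ strictly (giving an extra $\alpha_j \ge 1$ in $\alpha_j \ord_K(a_j)$), or $\ord_K(a_j) = -m/p^j$, in which case the bound is attained only through the $V^{s-s'}(d')$ summand, forcing $j < s' = \min\{\ord_p(m), s\}$, hence $p^{j+1} \mid m$ and $-m/p^j \in p\BZ$, so that Lemma~\ref{lemvalvua}(iii) improves $\ord_{L^{(r)}}(b_j)$ from $r-1$ to $\ge r$ and supplies the extra $\beta_j \ge 1$ on the $S$-side. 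Invoking at least one such $j$ (guaranteed by Lemma~\ref{aswitt}(i)), each monomial satisfies $\ord_{L^{(r)}}(\text{monomial}) \ge -m/p^d + r$, so $p^d \ord_{L^{(r)}}(Q_d(u(a), b)) \ge -m + p^d r$; this reads $\ge -m + r$ for $d = 0$ and $\ge -m + pr > -m + r$ for $d \ge 1$. The main technical obstacle will be justifying the dichotomy rigorously—specifically, ruling out that Witt-vector cross terms in the sum $c + V^{s-s'}(d')$ create a component $a_j$ with $j \ge s'$ attaining $\ord_K(a_j) = -m/p^j$—which relies on the submodule property of $\fil_m W_s(K)$ noted after Definition~\ref{filpw}.
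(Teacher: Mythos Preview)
Your monomial-by-monomial route is a legitimate alternative to the paper's proof, but the execution of part~(ii) contains a quantitative error. In Case~1 of your dichotomy ($p^j\ord_K(a_j)\ge -(m-1)$), the improvement in $\alpha_j\ord_K(a_j)$ over the base value $-\alpha_j m/p^j$ is $\alpha_j/p^j$, not $\alpha_j$. Your claim $\ord_{L^{(r)}}(\text{monomial})\ge -m/p^d+r$ therefore fails whenever $d\ge 1$ and every index in the monomial lies in Case~1: already the monomial $T_d S_d$ of $Q_d$ gives only $\ord\ge -(m-1)/p^d + (r-1)$, and $1/p^d + (r-1) < r$ for $d\ge 1$. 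The lemma itself still holds in this situation, since $p^d\bigl(-(m-1)/p^d+(r-1)\bigr)=-(m-1)+p^d(r-1)>-m+r$ for $d\ge 1$ and $r>1$; but reaching it requires a genuine case split on whether the monomial has an index in Case~2 (yielding $p^d\ord\ge -m+p^d r$ via the $S$-side) or none (yielding $p^d\ord\ge -(m-1)+p^d(r-1)$ via the $T$-side), rather than the single uniform bound you assert.

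The paper sidesteps this bookkeeping by a different mechanism. It sets $a'_i=a_i$ or $0$ according to whether $p^i\ord_K(a_i)\ge -(m-1)$ or $=-m$, and lets $a''$ be the complement, so that $a=a'+V^{s-s'}a''$. Since $Q(u(a),b)=v(a)-u(a)$ as Witt vectors and $u,v$ are ring maps, Witt additivity gives $Q(u(a),b)=Q(u(a'),b')+V^{s-s'}Q(u(a''),b'')$ in $W_s(L^{(r)})$. Each summand now carries \emph{uniform} bounds on its inputs ($\ord_K(a'_i)\ge -(m-1)/p^i$ with $\ord_{L^{(r)}}(b'_i)\ge r-1$; respectively $-m/p^i$ and $\ge r$), so Lemma~\ref{aswitt}(i),(iii) give the two bounds $-(m-1)+p^d(r-1)$ and $-m+p^d r$ with no per-monomial analysis, and the submodule property of $\fillog_n W_s(L^{(r)})$ passes them to the sum. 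In particular the pairing claim $\alpha_j>0\Leftrightarrow\beta_j>0$, which you need and which does take some argument, is never invoked.

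Incidentally, your worry about Witt cross-terms is unfounded: the addition polynomial for the $i$-th Witt component depends only on components of index $\ge i$, and $V^{s-s'}(d')$ vanishes in those positions for $i\ge s'$, so $a_i=c_i$ for $i\ge s'$ directly, with no appeal to the submodule property.
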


\begin{proof}
We put $s'=\min\{\ord_{p}(m),s\}$.
Let $a'=(a'_{s-1},\ldots,a_{0}')$ be an element of $W_{s}(K)$ such that
$a_{i}'=0$ if $p^{i}\ord_{K}(a_{i})=-m$ and $a_{i}'=a_{i}$ if $p^{i}\ord_{K}(a_{i})\ge -(m-1)$ for
$0\le i \le s-1$.
We note that if $s'\le i \le s-1$ then $a_{i}'=a_{i}$ by (\ref{filpwdef}).
Let $a''=(a_{s'-1}'',\ldots,a_{0}'')$ be an element of $W_{s'}(K)$ such that
$a_{i}''=0$ if $p^{i}\ord_{K}(a_{i})\ge -(m-1)$ and $a_{i}''=a_{i}$ if $p^{i}\ord_{K}(a_{i})=-m$
for $0\le i \le s'-1$.
Then we have $a=a'+V^{s-s'}(a'')$.
Let $b'\in W_{s}(L^{(r)})$ and $b''\in W_{s'}(L^{(r)})$ be the elements defined from
$a'$ and $a''$ respectively similarly as $b$ defined from $a$. 
Since we have $Q(u(a),b)=(Q_{s-1}(u(a),b),\ldots,Q_{0}(u(a),b))=v(a)-u(a)$ and 
similarly for $a'$ and $a''$ by (\ref{xminusxp}),
we have $Q(u(a),b)=Q(u(a'),b')+V^{s-s'}(Q(u(a''),b''))$.
Since $\fillog_{n}W_{s}(L^{(r)})$ is a submodule of $W_{s}(L^{(r)})$ for $n\in \BZ$,
the assertion follows for $a$ if the assertions follows for $a'$ and $a''$.
Hence we prove the assertions for $a'$ and $a''$.

By the definitions of $a'$ and $a''$,
we have $\ord_{L^{(r)}}(u(a_{i}'))\ge -(m-1)/p^{i}$ for $0\le i \le s-1$ 
and $\ord_{L^{(r)}}(u(a''_{i}))\ge -m/p^{i}$
for $0\le i\le s'-1$.
If $r>1$, then we have $\ord_{L^{(r)}}(b_{i}')\ge r-1$ for $0\le i\le s-1$
and $\ord_{L^{(r)}}(b_{i}'')\ge r$ for $0\le i \le s'-1$ by Lemma \ref{lemvalvua} (ii) and (iii).
If $(m,r)=(1,1)$, then we have $s'=0$ and $\ord_{L^{(r)}}(b_{i}')\ge r-1$ for $0\le i\le s-1$ 
by Lemma \ref{lemvalvua} (ii).
Hence, by Lemma \ref{aswitt} (i) and (iii), we have
\begin{equation}
\label{eqvalap}
p^{d}\ord_{L^{(r)}}(Q_{d}(u(a'),b'))\ge -(m-1)+p^{d}(r-1)\ge -m+r. 					
\end{equation}
Further we have
\begin{equation}
\label{eqvalapp}
p^{d}\ord_{L^{(r)}}(Q_{d}(u(a''),b''))\ge -m+p^{d}r\ge -m+r. 					
\end{equation}
If $r> 1$, then the right equality in (\ref{eqvalap}) holds only if $d=0$
and so in (\ref{eqvalapp}).
Hence the assertions follow.
\end{proof}

\begin{lem}
\label{valqz}
Let the notation be as in Lemma \ref{lemvalq}.
Let $m\ge 2$ be an integer and assume that $a\in\fil_{m}W_{s}(K)$.
Then we have
$\ord_{L^{(m)}}(Q_{0}(u(a),b)-\sum_{i=0}^{s-1}u(a_{i})^{p^{i}}
b_{i})>0$.
\end{lem}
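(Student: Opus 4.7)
The plan is to write $R(T,S) = Q_0(T,S) - \sum_{i=0}^{s-1} T_i^{p^i} S_i$ and show that every monomial of $R(u(a),b)$ has $\ord_{L^{(m)}}$-valuation at least $1$. By Lemma \ref{aswitt} (ii), $R$ lies in the ideal generated by the products $S_i S_j$, so every monomial of $R$ has total $S$-degree at least $2$. By Lemma \ref{aswitt} (iii), when $T_i$ is assigned weight $1/p^i$, every monomial of $Q_0$—and hence of $R$—has $T$-weight exactly $1$.

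A crucial structural observation I would use is that $R(T,S)$ does not involve $T_0$ at all. To see this, one proves by descending induction on $d$ that $Q_d(T,S)$ involves only the variables $T_d,\ldots,T_{s-1}$ and $S_d,\ldots,S_{s-1}$ for $d\geq 1$: the base case $Q_{s-1}=T_{s-1}S_{s-1}$ is immediate, and the inductive step is read off from the defining relation (\ref{defofq}). Combined with the resulting expression
\[
p^{s-1}Q_0 = \sum_{i=0}^{s-1} p^{s-1-i}T_i^{p^i}\bigl((1+S_i)^{p^i}-1\bigr) - \sum_{i=1}^{s-1} p^{s-1-i}Q_i^{p^i},
\]
this shows the only monomial of $Q_0$ involving $T_0$ is $T_0 S_0$, which is precisely what cancels in $R$.

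Next I would bound the factor valuations. Using the decomposition $a = a' + V^{s-s'}(a'')$ with $a' \in \fillog_{m-1}W_s(K)$ and $a'' \in \fillog_m W_{s'}(K)$ from (\ref{filpwdef}), Witt-additivity forces $a_i = a'_i$ whenever $i \geq s'$ (since the $V^{s-s'}$-summand vanishes in those positions), giving the tighter bound $\ord_K(a_i) \geq -(m-1)/p^i$ for $i \geq s'$, while only $\ord_K(a_i) \geq -m/p^i$ holds for $i<s'$. The bound $\ord_{L^{(m)}}(b_i) \geq m-1$ holds for every $i$ by Lemma \ref{lemvalvua} (ii), (iii) (using $r=m\geq 2>1$). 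Thus each monomial $c\cdot u(a)^\alpha b^\beta$ of $R(u(a),b)$ satisfies $\alpha_0=0$, $|\beta|\geq 2$, and $\sum_{i\geq 1}\alpha_i/p^i=1$, giving
\[
\ord_{L^{(m)}}\bigl(u(a)^\alpha b^\beta\bigr) \geq -m\sigma - (m-1)(1-\sigma) + 2(m-1) = (m-1)-\sigma,
\]
where $\sigma = \sum_{1\leq i<s'}\alpha_i/p^i \in [0,1]$.

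The main obstacle is the case $m=2$, where a naïve estimate yields only $\geq 0$. The point is that for $m=2$ one has $s' = \min\{\ord_p(m),s\} \leq 1$, so the index range $1\leq i<s'$ is empty; hence $\sigma=0$ and the estimate becomes $\geq m-1=1$. For $m\geq 3$ the trivial bound $\sigma\leq 1$ already gives $\geq m-2 \geq 1$. Taking the minimum over all monomials (and noting integer coefficients have non-negative valuation) completes the argument.
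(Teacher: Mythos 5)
Your argument is correct, but it takes a genuinely different route from the paper's. The paper splits $a=a'+V^{s-s'}(a'')$ as in Lemma \ref{lemvalq}, reduces the assertion to the two pieces, and handles the dangerous components (the $a''_{i}$ with $p^{i}\ord_{K}(a''_{i})=-m$) by the sharper estimate $\ord_{L^{(m)}}(b''_{i})\ge m$ of Lemma \ref{lemvalvua} (iii), available because those valuations lie in $p\BZ$; the two pieces then give the bounds $-(m-1)+2(m-1)>0$ and $-m+2m>0$. You keep $a$ whole, using the decomposition only to get the componentwise bounds $\ord_{K}(a_{i})\ge -(m-1)/p^{i}$ for $i\ge s'$ and $\ge -m/p^{i}$ for $i<s'$ (legitimate: the addition carries at a given component involve only components of index $\ge i$, so $a_{i}=a'_{i}$ for $i\ge s'$), and you instead neutralize the one index where the weaker bound could hurt when $m=2$, namely $i=0$, by the observation that $Q_{0}-T_{0}S_{0}$ contains no $T_{0}$; this is correctly read off from (\ref{defofq}) with $d=0$, since $Q_{i}\in\BZ[T,S]_{i}$ for $i\ge 1$ (your descending induction is in fact already built into the definition of the $Q_{d}$). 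Combined with Lemma \ref{aswitt} (ii) and (iii), this yields your estimate $(m-1)-\sigma\ge 1$ monomial by monomial, which is valid. What your route buys: you only need the uniform bound $\ord_{L^{(m)}}(b_{i})\ge m-1$ from Lemma \ref{lemvalvua}, and you avoid splitting the single component $Q_{0}$ along a Witt-vector sum, a step the paper passes over quickly (the carries there do have positive valuation by Lemma \ref{lemvalq} (ii) with $r=m$, but this deserves a word). What the paper's route buys: it needs no structural information about $Q_{0}$ beyond Lemma \ref{aswitt}, and its bookkeeping is shared with the proof of Lemma \ref{lemvalq}.
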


\begin{proof}
We put $s'=\min\{\ord_{p}(m),s\}$.
Let $a'=(a'_{s-1},\ldots, a_{0}')$ and $a''=(a_{s'-1}'',\ldots,a_{0}'')$
be as in the proof of Lemma \ref{lemvalq}.
We have $a=a'+V^{s-s'}(a'')$.
Let $b'\in W_{s}(L^{(m)})$ and $b''\in W_{s'}(L^{(m)})$ be the elements defined from
$a'$ and $a''$ respectively similarly as $b$ defined from $a$. 
Since $Q(u(a),b)=Q(u(a'),b')+V^{s-s'}Q(u(a''),b'')$ as in the proof of Lemma \ref{lemvalq}
and $\sum_{i=0}^{s-1}u(a_{i})^{p^{i}}b_{i}=\sum_{i=0}^{s-1}u(a'_{i})^{p^{i}}b_{i}'
+\sum_{i=0}^{s'-1}u(a''_{i})^{p^{i}}b''_{i}$,
it is sufficient to prove the assertion for $a'$ and $a''$.

As in the proof of Lemma \ref{lemvalq}, we have
$\ord_{L^{(m)}}(u(a'_{i}))\ge -(m-1)/p^{i}$ for $0\le i \le s-1$ and $\ord_{L^{(m)}}(u(a''_{i}))\ge -m/p^{i}$
for $0\le i \le s'-1$.
Further we have $\ord_{L^{(m)}}(b_{i}')\ge m-1$ for $0\le i\le s-1$
and $\ord_{L^{(m)}}(b_{i}'')\ge m$ for $0\le i \le s'-1$.
Hence, by Lemma \ref{aswitt} (ii) and (iii), we have
\begin{equation}
\ord_{L^{(m)}}(Q_{0}(u(a'),b')-\sum_{i=0}^{s-1}u(a_{i}')^{p^{i}}b_{i}')
\ge -(m-1)+2(m-1)=m-1>0. \notag
\end{equation}
Further we have
\begin{equation}
\ord_{L^{(m)}}(Q_{0}(u(a''),b'')-\sum_{i=0}^{s'-1}u(a_{i}'')^{p^{i}}b_{i}'')
\ge -m+2m=m>0. \notag
\end{equation}
Hence the assertion follows.
\end{proof}

\subsection{Calculation of characteristic forms}

Let $X$ be a smooth separated scheme over a perfect field $k$ of characteristic $p>0$.
Let $D$ be a divisor on $X$ with simple normal crossings and 
$\{D_{i}\}_{i\in I}$ the irreducible components of $D$.
We put $U=X-D$ and let $j\colon U\rightarrow X$ denote the canonical open immersion.
Let $K_{i}$ be the local field at the generic point of $D_{i}$ for $i\in I$ and
let $\dvr_{K_{i}}$ be the valuation ring of $K_{i}$ for $i\in I$.

Let $\chi$ be an element of $H^{1}_{\et}(U,\BQ/\BZ)$.
In this subsection, we prove the equality of the characteristic form $\cform(\chi)$ of $\chi$
and the characteristic form $\Char_{R}(V/U)$ of the Galois torsor 
$V\rightarrow U$ corresponding to $\chi$.

Let $p_{i}\colon P^{(R)}\rightarrow X$ be the morphism 
induced by the $i$-th projection for $i=1,2$.
Let $u\colon p_{1}^{-1}\dvr_{X}\rightarrow \dvr_{P^{(R)}}$
and $v\colon p_{2}^{-1}\dvr_{X}\rightarrow \dvr_{P^{(R)}}$
be the canonical morphisms of sheaves on $P^{(R)}$ by abuse of notation.
Let $L_{i}^{(R)}$ be the fractional field of the completion of the local ring 
$\dvr_{P^{(R)},\mathfrak{q}_{i}}$, where $R=\sum_{i\in I}r_{i}D_{i}$ is a linear combination
of integer coefficients $r_{i}\ge 1$ for every $i\in I$ and
$\mathfrak{q}_{i}$ is the generic point of the pull-back
$D_{i}^{(R)}$ of $D_{i}^{(D)}$ by $P^{(R)}\rightarrow P^{(D)}$.
If $D=D_{1}$ is irreducible, then we simply write $L^{(r_{1})}$ for $L_{1}^{(R)}$
as in the previous section.

We first consider the tamely ramified case.

\begin{lem}
\label{lemkum}
Assume that the order $n$ of $\chi$ is prime to $p$.
Take an inclusion $\mu_{n}\rightarrow \BQ/\BZ$ so that
the image of $\chi$ is contained in $\mu_{n}\subset \BQ/\BZ$.
We put $G=\mu_{n}$.
Let $V\rightarrow U$ be the $G$-torsor corresponding to $\chi$.
Let $R=\sum_{i\in I}r_{i}D_{i}$ be a linear combination of integral
coefficients $r_{i}\ge 1$ for every $i\in I$.
\begin{enumerate}
\item The ramification of $V$ over $U$ along $D$ is bounded by $D+$.
\item The characteristic form $\Char_{R}(V/U)$ is the zero mapping.
\end{enumerate}
\end{lem}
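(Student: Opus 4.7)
The plan is to exploit Kummer theory — available because $n$ is prime to $p$ — in order to give an explicit description of the $\mu_{n}$-torsor and of $Q^{(R)}$, and to read both assertions off a single Hensel-type argument on $P^{(R)}$. Working \'{e}tale locally on $X$, one may assume $X=\Spec A$ is affine with $D_{i}=(\pi_{i}=0)$, that $\mathrm{Pic}(U)=0$, and that the class of $\chi$ in $H^{1}_{\et}(U,\mu_{n})\simeq B^{\times}/(B^{\times})^{n}$ is represented by a unit $f=u\prod_{i\in I}\pi_{i}^{e_{i}}$ with $u\in A^{\times}$ and $e_{i}\in \BZ$, where $U=\Spec B$. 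The torsor is then $V=\Spec B[T]/(T^{n}-f)$, and $S:=T_{2}/T_{1}$, being invariant under the diagonal $\mu_{n}$-action, yields
\begin{equation}
(V\times_{k}V)/\Delta G=\Spec (B\otimes_{k}B)[S]/(S^{n}-v(f)/u(f)). \notag
\end{equation}

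The central observation is that $v(f)/u(f)$ is a \emph{unit} in $\dvr_{P^{(R)}}$: each $v(\pi_{i})/u(\pi_{i})$ is invertible by the very construction of $P^{(R)}$, and $v(u)/u(u)$ is a unit since $u$ is. Consequently $\dvr_{P^{(R)}}[S]/(S^{n}-v(f)/u(f))$ is finite \'{e}tale over $\dvr_{P^{(R)}}$, hence normal, and it realises $Q^{(R)}$ on this chart. Along the canonical lift $X\rightarrow P^{(R)}$ of the diagonal one has $u=v$, so $v(f)/u(f)|_{X}=1$; since $nS^{n-1}$ evaluates to the unit $n$ at $S=1$, Hensel's lemma produces an $n$-th root $g$ of $v(f)/u(f)$ in the henselisation of $\dvr_{P^{(R)},x}$ at each $x\in X$. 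The factorisation $S^{n}-v(f)/u(f)=\prod_{\zeta\in\mu_{n}}(S-\zeta g)$ then splits $Q^{(R)}\rightarrow P^{(R)}$ into $n$ \'{e}tale sheets on a neighbourhood of $X$, proving (i).

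For (ii), this same splitting identifies $W^{(R)}$, on a neighbourhood of $X$, with the trivial $\mu_{n}$-cover of $P^{(R)}$. Since $Z\subset X$ (being contained in $D\subset X$), the zero section $Z\hookrightarrow T^{(R)}$ lands in this neighbourhood via the canonical map $T^{(R)}\rightarrow P^{(R)}$, and pulling the trivialisation back gives an isomorphism $E^{(R)}\simeq T^{(R)}\times_{k}\mu_{n}$ of finite \'{e}tale $\mu_{n}$-covers of $T^{(R)}$. Regarded as a smooth commutative group scheme over $Z$ this is the direct product of the vector bundle $T^{(R)}$ with the finite \'{e}tale group scheme $\mu_{n}$, so its identity component is $E^{(R)0}=T^{(R)}\times\{1\}\xrightarrow{\sim}T^{(R)}$; hence $\tilde{G}^{(R)}=0$ and $\Char_{R}(V/U)\colon \tilde{G}^{(R)\vee}\rightarrow T^{(R)\vee}$ is vacuously the zero map. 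The main obstacle in this last step is to verify that the group-scheme structure on $E^{(R)}$ furnished by the Abbes--Saito construction (via the residual $G$-action on $(V\times_{k}V)/\Delta G$) really coincides with the direct product structure invoked here; in the split situation at hand this should follow from \cite[Proposition 2.16]{sa1}, because the Hensel splitting of $W^{(R)}$ near $X$ is manifestly $\mu_{n}$-equivariant with respect to the natural action on the labels of the sheets.
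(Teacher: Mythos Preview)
Your argument for (i) is correct and parallels the paper's: both use that the Kummer generator $v(f)/u(f)$ (or $v(a)/u(a)$ for units $a$) is a unit on $P^{(R)}$, so the $\mu_{n}$-cover is \'{e}tale. The paper checks this only at the generic point of $D^{(R)}$ via Lemma~\ref{lemvalvua}~(i) and then invokes purity; you show \'{e}taleness over all of $P^{(R)}$ directly, which is slightly stronger but the same idea.

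For (ii) there is a genuine gap. The Hensel splitting you produce lives on an \'{e}tale neighbourhood of the diagonal $X$ inside $P^{(R)}$, but $T^{(R)}$ is \emph{not} contained in such a neighbourhood: it is a full vector bundle over $Z$, and only its zero section lies on $X$. Hence ``pulling the trivialisation back'' along $T^{(R)}\hookrightarrow P^{(R)}$ is not justified as written; you only get a trivialisation of $E^{(R)}$ over the zero section of $T^{(R)}$, not over all of $T^{(R)}$. The fix along your lines is easy: compute $v(f)/u(f)|_{T^{(R)}}$ directly. Writing $v(\pi_{i})/u(\pi_{i})=1+u(\pi_{i})^{r_{i}-1}w_{i}$ and $v(u)/u(u)=1+u(u)^{-1}u(\pi_{i})^{r_{i}}w'$ (in the notation of Lemma~\ref{lemvalvua}), one sees that for every $D_{i}\subset Z$ (so $r_{i}\ge 2$) both factors restrict to $1$ on the locus $u(\pi_{i})=0$; hence $v(f)/u(f)|_{T^{(R)}}=1$ and $E^{(R)}\to T^{(R)}$ is indeed the trivial $\mu_{n}$-torsor. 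Your concern about the group-scheme structure then evaporates, since the cover is globally split.

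The paper bypasses this computation entirely: for (ii) it combines (i) with Proposition~\ref{asrameq} to see that $G_{K_{i}}^{r_{i}+}$ acts trivially on the generic fibre, and then Proposition~\ref{propkcf} forces the characteristic form to vanish at each generic point of $Z$, hence everywhere. This is shorter but relies on the black boxes imported from \cite{sa1}; your route, once repaired, is more self-contained and shows more, namely the explicit trivialisation $E^{(R)}\simeq T^{(R)}\times_{k}\mu_{n}$.
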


\begin{proof}
(i) By Lemma \ref{lempurity}, 
we may assume that $D=D_{1}$ is irreducible.
Since the assertion is local, we may assume that $X=\Spec A$ is
affine and $D$ is defined by an element of $A$.
Since $\ord_{L^{(r_{1})}}(v(a)/u(a))= 0$ for every unit 
$a\in \dvr_{K_{1}}^{\times}$ by Lemma \ref{lemvalvua} (i),
the assertion follows.

(ii) Let $Z$ be the support of $R-D$.
By (i) and Proposition \ref{asrameq}, the ramification group $G_{K_{i}}^{r_{i}+}$
acts trivially on $L_{i}=\Gamma(V\times_{U}K_{i},\dvr_{V\times_{U}K_{i}})$ for $D_{i}$ contained
in $Z$.
By Proposition \ref{propkcf}, the stalk of the characteristic form $\Char_{R}(V/U)$ 
at the generic point of $D_{i}$ defines the zero mapping for $D_{i}$ contained in $Z$. 
Hence the assertion follows.
\end{proof}

By Lemma \ref{lemkum}, the bound of 
the ramification of the Galois torsor $V\rightarrow U$ corresponding to $\chi$
and its characteristic form $\Char_{R}(V/U)$ does not depend on the prime-to-$p$-part of $\chi$, 
that is, they are dependent only on the $p$-part of $\chi$.

\begin{prop}
\label{propeqcf}
Assume that the order of $\chi$ is $p^{s}$ and
take an inclusion $\BZ/p^{s}\BZ\rightarrow \BQ/\BZ$ 
such that the image of $\chi$ is contained in $\BZ/p^{s}\BZ$.
We put $G=\BZ/p^{s}\BZ$.
Let $V\rightarrow U$ be a $G$-torsor corresponding to $\chi$.
\begin{enumerate}
\item The ramification of $V$ over $U$ along $D$ is bounded by $R_{\chi}'+$,
where $R_{\chi}'$ is the total dimension divisor of $\chi$ (Definition \ref{defdtdiv}).
\item Assume that $R_{\chi}'\neq D$ and put $Z=\Supp (R'_{\chi}-D)$.
Then the scheme $E^{(R_{\chi}')}\rightarrow T^{(R_{\chi}')}=TX(-R_{\chi}')\times_{X}Z$ is
defined by the Artin-Schreier equation $t^{p}-t=\cform(\chi)$.
\end{enumerate}
\end{prop}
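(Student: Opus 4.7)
The approach is to realize the $p$-part of $\chi$ explicitly as an Artin-Schreier-Witt class and then track the pullback of the defining Witt vector along the dilatation $P^{(R'_{\chi})}$. Working Zariski-locally on $X$, we may assume $X=\Spec A$, $U=\Spec B$, and write the $p$-part of $\chi$ as $\delta_{s}(a)$ for some $a=(a_{s-1},\ldots,a_{0})\in \Gamma(X,\fil_{R'_{\chi}}j_{*}W_{s}(\dvr_{U}))$, so that $V\to U$ is the Artin-Schreier-Witt extension $(F-1)y=a$. Pulling back via the two projections $p_{1},p_{2}\colon P^{(R'_{\chi})}\to X$ and taking the quotient by the diagonal $G$-action, the cover $(V\times_{k}V)/\Delta G$ over $U\times_{k}U=P^{(R'_{\chi})}-D^{(R'_{\chi})}$ is given by the Artin-Schreier-Witt equation $(F-1)z=v(a)-u(a)$ in $z=y\circ p_{2}-y\circ p_{1}$. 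By (\ref{xminusxp}), $v(a)-u(a)=(Q_{s-1}(u(a),b),\ldots,Q_{0}(u(a),b))$ with $b_{i}=v(a_{i})/u(a_{i})-1$ (and $b_{i}=0$ if $a_{i}=0$).

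For part (i), by Lemma \ref{lempurity} it suffices to check the bound at each generic point $\mathfrak{p}_{i}$ of $D_{i}$. Since $a|_{K_{i}}\in \fil_{r_{i}}W_{s}(K_{i})$ by the definition of $R'_{\chi}$, Lemma \ref{lemvalq} applied with $m=r=r_{i}$ yields $p^{d}\ord_{L_{i}^{(R'_{\chi})}}(Q_{d}(u(a|_{K_{i}}),b))\geq 0$ for every $0\le d\le s-1$; equivalently, $Q(u(a|_{K_{i}}),b)\in W_{s}(\dvr_{L_{i}^{(R'_{\chi})}})$. Hence the Artin-Schreier-Witt equation defining $(V\times_{k}V)/\Delta G$ extends to an \'etale cover of a neighborhood of the generic point of $D_{i}^{(R'_{\chi})}$, so $Q^{(R'_{\chi})}\to P^{(R'_{\chi})}$ is \'etale at the image of the lift $X\to P^{(R'_{\chi})}$, as required.

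For part (ii), restrict the Witt equation to $T^{(R'_{\chi})}$, the preimage of $Z\subset D$ in $P^{(R'_{\chi})}$. Whenever $r_{i}\ge 2$, Lemma \ref{lemvalq}~(ii) gives the strict inequality $p^{d}\ord(Q_{d}(u(a),b))>0$ for every $0<d\le s-1$, so the upper Witt coordinates $Q_{d}$ with $d>0$ vanish on the components of $T^{(R'_{\chi})}$ over $D_{i}$. Consequently $(F-1)z=v(a)-u(a)$ restricts on $T^{(R'_{\chi})}$ to the single Artin-Schreier equation $t^{p}-t=\bar{Q}_{0}$ in the bottom coordinate $t$ of $z$, and Lemma \ref{valqz} identifies $\bar{Q}_{0}$ with the image of $\sum_{i=0}^{s-1}u(a_{i})^{p^{i}-1}(v(a_{i})-u(a_{i}))$ modulo terms of strictly positive valuation. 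Interpreting $(v(a_{i})-u(a_{i}))/\pi^{r_{i}}$ on $T^{(R'_{\chi})}=TX(-R'_{\chi})\times_{X}Z$ as the linear coordinate dual to $da_{i}/\pi^{r_{i}}\in \Omega^{1}_{X}(R'_{\chi})|_{Z}$, this residue class is exactly the image of $-F^{s-1}d(a)$ in $\gr_{R'_{\chi}/(R'_{\chi}-Z)}j_{*}\Omega^{1}_{U}$, which equals $\cform(\chi)$ by Proposition \ref{propnlogcd}.

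The principal obstacle is the identification in Step (ii) of the Witt-vector leading term on $T^{(R'_{\chi})}$ with $\cform(\chi)$ viewed as a linear form via the duality between $TX(-R'_{\chi})\times_{X}Z$ and $\Omega^{1}_{X}(R'_{\chi})|_{Z}$; this requires carefully unwinding the construction of $P^{(R'_{\chi})}$ as an iterated dilatation and matching the resulting dual coordinates with the image of $-F^{s-1}d$. The exceptional case $(p,r_{i})=(2,2)$ is particularly delicate: Lemma \ref{lemvalvua}~(iii) shows that $v(a_{0})/u(a_{0})-1$ picks up a square-root contribution, which must be matched with the $\sqrt{\overline{\pi^{2}a_{0}}}d\pi/\pi^{2}$ correction in Proposition \ref{propnrar}~(i)---this is precisely why $\cform(\chi)$ is defined over $Z^{1/p}$ rather than $Z$ in that case. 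One also has to verify that the Artin-Schreier cover constructed above exhausts $E^{(R'_{\chi})0}$, which reduces to the non-degeneracy of the ramification at the multiplicity $R'_{\chi}$ together with the non-vanishing of $\cform(\chi)$ on each component of $Z$.
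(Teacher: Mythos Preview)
Your proposal is correct and follows essentially the same approach as the paper: realize the torsor via Artin--Schreier--Witt, use the $Q_{d}$ decomposition (\ref{xminusxp}), apply Lemma~\ref{lemvalq} for integrality and the vanishing of the higher Witt coordinates on $T^{(R'_{\chi})}$, and Lemma~\ref{valqz} to reduce to a single Artin--Schreier equation whose right-hand side is $\sum_{j} u(a_{j})^{p^{j}-1}(v(a_{j})-u(a_{j}))$.

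Two small remarks. First, for the final identification with $\cform(\chi)$ the paper does not invoke an abstract duality but instead performs an explicit case analysis based on (\ref{vauamo}) in the proof of Lemma~\ref{lemvalvua}, distinguishing the cases $n_{ij}\notin p\BZ$, $n_{ij}\in p\BZ$ with $(p,m_{i},\ord_{p}(n_{ij}))\neq(2,2,1)$, and the exceptional case $(p,m_{i},\ord_{p}(n_{ij}))=(2,2,1)$; this makes the match with the formula in Proposition~\ref{propnrar}(i) transparent and handles the square-root term directly. Second, your concern about whether the Artin--Schreier cover exhausts $E^{(R'_{\chi})0}$ is misplaced: the proposition only claims an equation for $E^{(R'_{\chi})}$, and once the Witt coefficients are shown to be integral the normalization $Q^{(R'_{\chi})}$ coincides with the \'etale Artin--Schreier--Witt cover on a neighborhood of $T^{(R'_{\chi})}$, so $E^{(R'_{\chi})}$ is literally its restriction---no non-degeneracy hypothesis is needed here.
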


\begin{proof}
We put $m_{i}=\dt(\chi|_{K_{i}})$ for $i\in I$.
Let $a=(a_{s-1},\ldots,a_{0})\in \fil_{R_{\chi}'}j_{*}W_{s}(\dvr_{U})$ be an element
whose image by $\delta_{s}$ (\ref{deltsshf}) is $\chi$.
Then $V\times_{k}V/\Delta G\rightarrow U\times_{k}U$ is the $G$-torsor
defined by the Artin-Schreier-Witt equation $(F-1)(t)=v(a)-u(a)$.

(i) By Lemma \ref{lempurity}, 
we may assume that $D$ is irreducible.
Since the assertion is local, we may assume that $X=\Spec A$ is
affine and that $D$ is defined by an element of $A$.
By (\ref{xminusxp}) and Lemma \ref{lemvalq}, the difference $v(a)-u(a)$ is a regular function on $P^{(R_{\chi}')}$.
Hence the assertion follows.

(ii) By (i), (\ref{xminusxp}), Lemma \ref{lemvalq} (ii), and Lemma \ref{valqz}, 
the scheme $E^{(R'_{\chi})}\rightarrow T^{(R'_{\chi})}$
is the $G$-torsor defined by the Artin-Schreier equation $t^{p}-t=\sum_{j=0}^{s-1}u(a_{j})^{p^{j}-1}
(v(a_{j})-u(a_{j}))$.
We put $n_{ij}=\ord_{K_{i}}(a_{j})$ for $i\in I$ and $0\le j \le s-1$.
As calculating in the proof of Lemma \ref{lemvalvua},
we have the following on a neighborhood of the generic point of
$D_{i}^{(R_{\chi}')}$ for $i\in I$ such that $m_{i}>1$:
\begin{enumerate}
\renewcommand{\labelenumi}{(\alph{enumi})}
\item If $n_{ij}\notin p\BZ$, we have 
$u(a_{j})^{p^{j}-1}(v(a_{j})-u(a_{j}))=
n_{ij}u(a_{j})^{p^{j}}u(t_{i})^{m_{i}-1}w_{i}$;
\item If $n_{ij}\in p\BZ$ and if $(p,m_{i},\ord_{p}(n_{ij}))\neq (2,2,1)$,
we have $u(a_{j})^{p^{j}-1}(v(a_{j})-u(a_{j}))=
u(a_{j})^{p^{j}}u(a_{j}')^{-1}u(t_{i})^{m_{i}}w_{ij}'$;
\item If $(p,m_{i},\ord_{p}(n_{ij}))= (2,2,1)$,
we have $u(a_{j})^{p^{j}-1}(v(a_{j})-u(a_{j}))=u(a_{j})^{p^{j}}
(u(a_{j}')^{-1}u(t_{i})^{2}w_{ij}'+(n_{ij}/2)u(t_{i})^{2}w_{i}^{2})$,
\end{enumerate}
where $t_{i}$ is a local equation of $D_{i}$, $a_{j}'=a_{j}/t_{i}^{n_{ij}}$, 
$w_{i}=(v(t_{i})-u(t_{i}))/u(t_{i})^{m_{i}}$, and
$w_{ij}'=(v(a_{j}')-u(a_{j}'))/u(t_{i})^{m_{i}}$
for every $j=0,\ldots,s-1$.
Since $a\in \fil_{R_{\chi}'}j_{*}W_{s}(\dvr_{U})$,
we have $p^{j}\ord_{L_{i}^{(m_{i})}}(a_{j})\ge -(m_{i}-1)$ if $n_{ij}\notin p\BZ$
and $p^{j}\ord_{L_{i}^{(m_{i})}}(a_{j})\ge -m_{i}$ if $n_{ij}\in p\BZ$.
If $(p,m_{i},\ord_{p}(n_{ij}),p^{j}n_{ij})=(2,2,1,-2)$, we have $(p,j,n_{ij})=(2,0,-2)$. 
Hence the assertion follows by identifying $w_{i}$ and $w_{ij}'$ with
$dt_{i}/t_{i}^{m_{i}}$ and $da_{j}'/t_{i}^{m_{i}}$ respectively.
\end{proof}

\begin{cor} 
\label{corcf}
Let $V\rightarrow U$ be the Galois torsor corresponding to $\chi$.
Assume that the ramification of $V$ over $U$ along $D$ is non-degenerate
at the multiplicity $R_{\chi}'$.
\begin{enumerate}
\item The image of the generator 
$1\in \tilde{G}^{(R_{\chi}')\vee}=\mathbf{F}_{p}$
by $\Char_{R_{\chi}'}(V/U)$
is equal to $\cform(\chi)$.
\item Assume that $D=D_{1}$ is irreducible and 
that $\dt(\chi|_{K_{1}})>1$.
Then the ramification of $V$ over $U$ at the generic point of $D$
is not bounded by $rD+$ for any rational number $r$ such that
$1\le r<\dt(\chi|_{K_{1}})$.
\end{enumerate}
\end{cor}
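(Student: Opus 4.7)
The plan is to deduce both parts from Proposition \ref{propeqcf} together with Propositions \ref{asrameq} and \ref{propkcf}. For part (i), I would invoke Proposition \ref{propeqcf} (ii), which already identifies $E^{(R_{\chi}')}\to T^{(R_{\chi}')}$ with the Artin--Schreier $\mathbf{F}_{p}$-torsor $t^{p}-t=\cform(\chi)$. Note that $\cform(\chi)$ is non-zero at the generic point of every component of $Z=\Supp(R_{\chi}'-D)$: indeed, for each $D_{i}\subset Z$ we have $m_{i}:=\dt(\chi|_{K_{i}})>1$, and the injection $\phi'^{(m_{i})}$ of Proposition \ref{propnlogcd} (ii) sends the non-zero class of $\chi|_{K_{i}}$ in $\gr_{m_{i}}H^{1}$ to the germ of $\cform(\chi)$ at $\mathfrak{p}_{i}$. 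It then remains to unwind the definition of $\Char_{R_{\chi}'}(V/U)$: the Artin--Schreier description identifies $\tilde{G}^{(R_{\chi}')}$ with the constant group scheme $\mathbf{F}_{p}$, whose Cartier dual is again $\mathbf{F}_{p}$, and the extension class of $0\to \mathbf{F}_{p}\to E^{(R_{\chi}')0}\to T^{(R_{\chi}')}\to 0$ is exactly the function $\cform(\chi)$ regarded as a linear functional on the vector bundle $T^{(R_{\chi}')}$, i.e.\ as an element of $T^{(R_{\chi}')\vee}$. This should give $\Char_{R_{\chi}'}(V/U)(1)=\cform(\chi)$.

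For part (ii), I would set $m=\dt(\chi|_{K_{1}})$, so that $R_{\chi}'=mD$. By part (i) the stalk of $\Char_{mD}(V/U)$ at the generic point of $D$ is non-zero, and Proposition \ref{propkcf} then yields that $G_{K_{1}}^{m+}$ acts non-trivially on $L=\Gamma(V\times_{U}K_{1},\dvr_{V\times_{U}K_{1}})$. For any rational number $r$ with $1\le r<m$, I would use the defining formula $G_{K_{1}}^{r+}=\overline{\bigcup_{s>r}G_{K_{1}}^{s}}$ together with the decreasing nature of the ramification filtration to conclude $G_{K_{1}}^{m}\subset G_{K_{1}}^{r+}$, so that $G_{K_{1}}^{m+}\subset G_{K_{1}}^{m}\subset G_{K_{1}}^{r+}$. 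Thus $G_{K_{1}}^{r+}$ also acts non-trivially on $L$, and Proposition \ref{asrameq} forces the ramification at the generic point of $D$ to fail to be bounded by $rD+$.

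The only real obstacle I anticipate is the bookkeeping in part (i): precisely matching the Artin--Schreier presentation of $E^{(R_{\chi}')}$ supplied by Proposition \ref{propeqcf} to the Cartier-dual embedding $\mathbf{F}_{p}\hookrightarrow T^{(R_{\chi}')\vee}$ that enters the definition of $\Char$. This should amount to a standard instance of Artin--Schreier duality between $\mathbf{F}_{p}$-torsors over an affine base $T$ and the quotient $\Gamma(T,\dvr_{T})/(F-1)\Gamma(T,\dvr_{T})$, combined with the linearity of $\cform(\chi)$ along the vector-bundle fibres of $T^{(R_{\chi}')}$. Once this identification is spelled out, part (ii) is a short formal consequence of part (i) and the two equivalences in Propositions \ref{asrameq} and \ref{propkcf}.
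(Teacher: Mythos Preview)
Your approach matches the paper's. For part (i) the paper's proof is the one-line ``by Lemma \ref{lemkum} and Proposition \ref{propeqcf} (ii)''; your unwinding of the Artin--Schreier/Cartier duality is precisely what underlies that citation. One small omission: Proposition \ref{propeqcf} assumes $\chi$ has $p$-power order, whereas the corollary is stated for general $\chi\in H^{1}_{\et}(U,\BQ/\BZ)$, so you still need Lemma \ref{lemkum} to kill the prime-to-$p$ part of $\chi$ in $\Char_{R_{\chi}'}(V/U)$ before invoking Proposition \ref{propeqcf}. For part (ii) the paper argues by contradiction (assume $G_{K}^{r+}$ trivial, deduce $\cform(\chi|_{K})=0$), but this implicitly uses the same inclusion $G_{K}^{m+}\subset G_{K}^{r+}$ that you spell out; the two arguments are contrapositives of one another and invoke Propositions \ref{asrameq} and \ref{propkcf} identically.
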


\begin{proof}
(i) The assertion follows by Lemma \ref{lemkum} and Proposition \ref{propeqcf} (ii).

(ii) We put $K=K_{1}$.
Assume that $G_{K}^{r+}$ acts trivially 
on $L=\Gamma (V\times_{U}K,\dvr_{V\times_{U}K})$
for a rational number $r$ such that $1\le r< \dt(\chi|_{K})$.
Then, by (i) and Proposition \ref{propkcf}, 
the stalk $\cform(\chi|_{K})$ of $\cform(\chi)$ at the generic point of $D$ must be $0$.
However $\cform(\chi)$ is non-zero.
Hence the assertion follows by Proposition
\ref{asrameq}.
\end{proof}

\section{Equality of ramification filtrations}
\label{seceqfil}

Let $K$ be a complete discrete valuation field of characteristic $p>0$ and $F_{K}$ the residue field.
Let $G_{K}$ be the absolute Galois group of $K$.
We show that the abelianization of 
Abbes-Saito's filtration $\{G_{K}^{r}\}_{r \in \mathbf{Q}>0}$ (\cite[Definition 3.4]{as1})
is the same as 
$\{\fil_{m}H^{1}(K,\mathbf{Q}/\mathbf{Z})\}_{m\in \mathbf{Z}_{\ge 1}}$ (Definition \ref{defoffiltofh1})
by taking dual.
If $m>2$, then it has been proved by Abbes-Saito (\cite[Th\'{e}or\`{e}me 9.10]{as3}).

\begin{thm}
\label{thmeqfil}
Let $\chi$ be an element of $H^{1}(K,\mathbf{Q}/\mathbf{Z})$.
Let $m\ge 1$ be an integer.
Let $r$ be a rational number such that $m\leq r<m+1$.
If $F_{K}$ is finitely generated over a perfect subfield $k\subset F_{K}$, 
then the following are equivalent:
\begin{enumerate}
\item $\chi \in \fil_{m}H^{1}(K,\mathbf{Q}/\mathbf{Z})$.
\item $\chi(G_{K}^{m+})=0$.
\item $\chi(G_{K}^{r+})=0$.
\end{enumerate}
\end{thm}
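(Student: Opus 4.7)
The plan is to realize $K$ geometrically and exploit the comparison, established in Section~\ref{secasram}, between Matsuda's characteristic form of $\chi$ (Section~\ref{seckmram}) and Saito's characteristic form of the associated Galois torsor, so that both filtrations become statements about the same object.

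I would first reduce to the case that $\chi$ has order a power of $p$. The prime-to-$p$ part $\chi_{p'}$ of $\chi$ lies in $\fil_{1}H^{1}(K,\mathbf{Q}/\mathbf{Z})\subset \fil_{m}H^{1}(K,\mathbf{Q}/\mathbf{Z})$ by Definition~\ref{deffilpho}, and by Lemma~\ref{lemkum} combined with Proposition~\ref{asrameq} it satisfies $\chi_{p'}(G_{K}^{1+})=0$, hence also $\chi_{p'}(G_{K}^{m+})=0$ and $\chi_{p'}(G_{K}^{r+})=0$. So $\chi_{p'}$ automatically verifies the three conditions, and the equivalence reduces to the $p$-primary part of $\chi$; I may therefore assume $\chi=\delta_{s}(a)$ for some $a\in W_{s}(K)$. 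Next, using that $F_{K}$ is finitely generated over $k$, I would produce a smooth affine $k$-scheme $X=\Spec A$ with an irreducible divisor $D=(\pi=0)$ whose associated local field is $K$. After shrinking $X$ if necessary, lift $a$ to $\tilde{a}\in \Gamma(U, W_{s}(\dvr_{U}))$ and set $\tilde{\chi}=\delta_{s}(\tilde{a})\in H^{1}_{\et}(U,\mathbf{Q}/\mathbf{Z})$, with associated $\mathbf{Z}/p^{s}\mathbf{Z}$-torsor $V\rightarrow U$. By construction $\tilde{\chi}|_{K}=\chi$ and $R'_{\tilde{\chi}}=\dt(\chi)\,D$.

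Since (ii) is the case $r=m$ of (iii), it suffices to prove $(\mathrm{i})\Leftrightarrow(\mathrm{iii})$. For $(\mathrm{i})\Rightarrow(\mathrm{iii})$: the hypothesis $\chi\in \fil_{m}H^{1}(K,\mathbf{Q}/\mathbf{Z})$ gives $\dt(\chi)\le m\le r$, so $R'_{\tilde{\chi}}\le rD$. By Proposition~\ref{propeqcf}~(i) the ramification of $V/U$ along $D$ is bounded by $R'_{\tilde{\chi}}+\le rD+$, and Proposition~\ref{asrameq} then yields $\chi(G_{K}^{r+})=0$. For $(\mathrm{iii})\Rightarrow(\mathrm{i})$, argue contrapositively: if $\chi\notin \fil_{m}H^{1}(K,\mathbf{Q}/\mathbf{Z})$, set $m'=\dt(\chi)\ge m+1>r$. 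Applying Corollary~\ref{corcf}~(ii) with this $r\in[1,m')$ shows that the ramification of $V/U$ at the generic point of $D$ is not bounded by $rD+$, and Proposition~\ref{asrameq} gives $\chi(G_{K}^{r+})\ne 0$.

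The main obstacle is to ensure the hypotheses of Corollary~\ref{corcf}~(ii) are satisfied in the chosen geometric model, namely that the Abbes--Saito characteristic form $\Char_{R'_{\tilde{\chi}}}(V/U)$ is non-zero at the generic point of $D$ and that the ramification is non-degenerate at the multiplicity $R'_{\tilde{\chi}}$. Non-vanishing follows from Corollary~\ref{corcf}~(i), which identifies this form with Matsuda's characteristic form $\cform(\chi)\in \gr_{m'}\Omega^{1}_{K}\otimes_{F_{K}}F_{K}^{1/p}$: since $\chi\notin \fil_{m'-1}H^{1}(K,\mathbf{Q}/\mathbf{Z})$ by minimality of $\dt(\chi)$, injectivity of $\phi'^{(m')}$ in (\ref{phipgen}) (with the exceptional case $(p,m')=(2,2)$ handled by Proposition~\ref{propnrar}) yields $\cform(\chi)\ne 0$. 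Non-degeneracy can be arranged by shrinking $X$ around $\mathfrak{p}$ if necessary, which does not affect the stalk-wise conclusion at $\mathfrak{p}$. The upshot is that the exceptional small-$m$ cases, where the treatment in~\cite{as3} does not apply directly, are absorbed into the unified Witt-vector computation of Section~\ref{secasram}.
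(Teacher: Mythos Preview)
Your proposal is correct and follows essentially the same approach as the paper: reduce to the $p$-primary part, realize $K$ as the local field of a smooth pair $(X,D)$ over $k$, and combine Proposition~\ref{propeqcf}~(i), Corollary~\ref{corcf}~(ii), and Proposition~\ref{asrameq} to show that $\chi(G_{K}^{r+})=0$ if and only if $r\ge \dt(\chi)$. The only cosmetic differences are that the paper handles the prime-to-$p$ reduction by invoking directly that $G_{K}^{1+}$ is pro-$p$ (\cite[Proposition~3.7.1]{as1}) rather than via Lemma~\ref{lemkum}, and it packages the argument as the single equivalence ``bounded by $rD+$ $\Leftrightarrow$ $r\ge \dt(\chi)$'' before reading off (i)--(iii), whereas you argue (i)$\Leftrightarrow$(iii) contrapositively; your explicit mention of the non-degeneracy hypothesis (arranged by shrinking $X$) is implicit in the paper's ``by shrinking $X$ if necessary''.
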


\begin{proof}
Since $G_{K}^{1+}$ is a pro-$p$-subgroup of $G_{K}$ (\cite[Proposition 3.7.1]{as1}), 
we may assume that the order of $\chi$ is a power of $p$.
Let $p^{s}$ be the order of $\chi$ and put $G=\BZ/p^{s}\BZ$.
We take an inclusion $\BZ/p^{s}\BZ\rightarrow \BQ/\BZ$ such that
the image of $\chi$ is contained in $\BZ/p^{s}\BZ$.
As in \cite[6.1]{as3}, we take a smooth affine connected scheme $X$ over $k$ and 
a smooth irreducible divisor $D$ on $X$
such that the completion $\hat{\dvr}_{X,\mathfrak{p}}$ of the local ring $\dvr_{X,\mathfrak{p}}$ 
at the generic point $\mathfrak{p}$ of $D$ is isomorphic to $\dvr_{K}$.
By shrinking $X$ if necessary, we take a $G$-torsor $V\rightarrow U=X-D$
corresponding to a character of $\pi^{\ab}_{1}(U)$ whose restriction on $G_{K}$ is $\chi$.

By Proposition \ref{propeqcf} (i) and Corollary \ref{corcf} (ii), the ramification of $V$ over $U$
at the generic point of $D$ is bounded by $rD+$ for a rational number $r\ge 1$
if and only if $r\ge \dt(\chi)$.
Further, by Proposition \ref{asrameq}, the former condition is equivalent to that $G_{K}^{r+}$
acts trivially on $L=\Gamma(V\times_{U}K, \dvr_{V\times_{U}K})$.
Hence $\chi(G_{K}^{r+})=0$ if and only if $r\ge \dt(\chi)$.

Since the condition (i) holds if and only if $m\ge \dt(\chi)$, 
the equivalence of (i) and (ii) follows.
Since $m\le r$, the condition (ii) deduces the condition (iii).
Suppose that the condition (iii) holds.
Since $r\ge \dt(\chi)$, we have $m=[r]\ge \dt(\chi)$. 
Hence the condition (ii) holds.
\end{proof}

\begin{proof}[Proof of Theorem \ref{mainthm}]
We may identify $K$ with $F_{K}((\pi))$ by taking a uniformizer of $K$.
Let $K_{h}=\Frac (F_{K}[\pi]_{(\pi)}^{h})$ be the fractional field of the henselization of the 
localization $F_{K}[\pi]_{(\pi)}$ of $F_{K}[\pi]$ at the prime ideal $(\pi)$.
Since the completion of $K_{h}$ is $K$, the canonical morphisms
$G_{K}\rightarrow G_{K_{h}}$ and $H^{1}(K_{h},\BQ/\BZ)\rightarrow H^{1}(K,\BQ/\BZ)$ are
isomorphisms.

Let $k$ be a perfect subfield of $F_{K}$ and take
a separating transcendental basis $S$ of $F_{K}$ over $k$.
For a finite subextension $E$ of $F_{K}$ over $k(S')$, where $S'$ is a finite set of $S$,
let $K_{E,h}$ denote the fractional field of the henselization of the local ring $E[\pi]_{(\pi)}$.
Since $F_{K}=\varinjlim E$, we may identify $K_{h}$ with the inductive limit $\varinjlim K_{E,h}$ and
$H^{1}(K_{h},\BQ/\BZ)$ with $\varinjlim H^{1}(K_{E,h},\BQ/\BZ)$, where
$E$ runs through such subfields of $F_{K}$.

Let $\chi$ be an element of $H^{1}(K,\BQ/\BZ)$.
Take a subfield $E$ of $F_{K}$ such that $E$ is a subextension of $F_{K}$ over $k(S')$
for a finite subset $S'\subset S$ and that $\chi\in H^{1}(K_{E,h},\BQ/\BZ)$.
Let $K_{E}$ denote the completion of $K_{E,h}$.
We identify $H^{1}(K_{E},\BQ/\BZ)$ with $H^{1}(K_{E,h},\BQ/\BZ)$
and $\chi\in H^{1}(K_{E,h},\BQ/\BZ)$ with an element of $H^{1}(K_{E},\BQ/\BZ)$.
We prove that each condition in Theorem \ref{thmeqfil} holds for $K$ 
if and only if it holds for $K_{E}$.

Let $\dt_{K}(\chi)$ and $\dt_{K_{E}}(\chi)$ denote the total dimension of $\chi$
as an element of $H^{1}(K,\BQ/\BZ)$ and $H^{1}(K_{E},\BQ/\BZ)$ respectively.
We put $\dt_{K}(\chi)=n$ and $\dt_{K_{E}}(\chi)=n'$ and prove that $n=n'$.
Since $\fil_{m}W_{s}(K_{E})\subset \fil_{m}W_{s}(K)$ for every integer $m\ge 1$,
we have $\fil_{m}H_{1}(K_{E},\BQ/\BZ)\subset \fil_{m}H^{1}(K,\BQ/\BZ)$.
Hence we have $1\le n\le n'$, which proves that $n=1$ if $n'=1$.

Assume that $n'>1$.
Take an element $\bar{a}$ of $\gr_{n'}W_{s}(E(\pi))$
whose image in $\gr_{n'}H^{1}(K_{E},\BQ/\BZ)$ is $\chi$.
Let $\cform_{K}(\chi)$ and $\cform_{K_{E}}(\chi)$ denote the characteristic form of $\chi$
as an element of $H^{1}(K,\BQ/\BZ)$ and $H^{1}(K_{E},\BQ/\BZ)$ respectively.
Let $\dvr_{K}$ and $\dvr_{K_{E}}$ denote the valuation rings of $K$ and $K_{E}$
respectively.
Since $F_{K}$ is separable over $E$, we have an injection 
$\Omega^{1}_{E[\pi]_{(\pi)}}\rightarrow \Omega^{1}_{F_{K}[\pi]_{(\pi)}}$.
This injection induces the injection $\Omega_{\dvr_{K_{E}}}^{1}\rightarrow \Omega^{1}_{\dvr_{K}}$, 
and further the injection $\gr_{n'}\Omega_{K_{E}}^{1}\rightarrow \gr_{n'}\Omega^{1}_{K}$.
Hence the canonical morphism $\gr_{n'}\Omega_{K_{E}}^{1}\otimes_{F_{K}}F_{K}^{1/p}\rightarrow 
\gr_{n'}\Omega^{1}_{K}\otimes_{F_{K}}F_{K}^{1/p}$ is injective.
Since $\cform_{K_{E}}(\chi)\neq 0$, the image of $\cform_{K_{E}}(\chi)$ in 
$\gr_{n'}\Omega^{1}_{K}\otimes_{F_{K}}F_{K}^{1/p}$ is not $0$.
This implies that $\cform_{K}(\chi)$ is the image of $\cform_{K_{E}}(\chi)$ in $\gr_{n'}\Omega^{1}_{K}\otimes_{F_{K}}F_{K}^{1/p}$.
Hence we have $n=n'$.
Since the condition (i) in Theorem \ref{thmeqfil} holds for $K$
if and only if $m\ge n$ and similarly for $K_{E}$, 
the condition (i) in Theorem \ref{thmeqfil} for $K$ is equivalent to that for $K_{E}$.

Let $r\ge 1$ be a rational number.
Since $K$ is an extension of $K_{E}$ of ramification index $1$ and the extension
of residue fields is separable,
by applying \cite[Lemma 2.2]{as2},
the canonical morphism $G_{K}\rightarrow G_{K_{E}}$
induces the surjection $G_{K}^{s}\rightarrow G_{K_{E}}^{s}$ for every $s\in \BQ_{\ge 1}$.
Hence we have $\chi(G_{K}^{r+})=0$ if and only if $\chi(G_{K_{E}}^{r+})=0$,
which proves the assertions for conditions (ii) and (iii) in Theorem \ref{thmeqfil}. 
\end{proof}


\end{document}